\documentclass[12pt,letterpaper]{article}
\usepackage[utf8]{inputenc}
\usepackage{amsmath,amsfonts,amsthm,amssymb,verbatim,etoolbox,color,mathtools}
\usepackage{hyperref}
\usepackage{authblk}
\usepackage{mathrsfs}
\usepackage{fullpage}
\usepackage{dsfont}

\usepackage{biblatex}
\bibliography{biblio}
\usepackage[title]{appendix}

\usepackage[most]{tcolorbox}

\newcommand{\lo}[1]{\mathcal{L}(#1)}

\def\E{\mathbb{E}}
\def\P{\mathbb{P}}
\newcommand{\bbf}{\mathbb{F}}
\newcommand{\bbn}{\mathbb{N}}
\newcommand{\bbz}{\mathbb{Z}}
\newcommand{\bbr}{\mathbb{R}}

\newcommand{\bbc}{\mathbb{C}}

\renewcommand{\hat}{\widehat}

\DeclareMathOperator{\supp}{supp}

\renewcommand{\subset}{\subseteq}
\renewcommand{\supset}{\supseteq}

\newtheorem{lemma}{Lemma}[section]
\newtheorem{theorem}[lemma]{Theorem}

\newtheorem{proposition}[lemma]{Proposition}

\theoremstyle{definition}

\newtheorem{definition}[lemma]{Definition}
\newtheorem{Question}[lemma]{Question}

\date{}
\title{Improved Bounds for 3-Progressions}
\author{Rushil Raghavan}
\date{\small{Department of Mathematics, University of California, Los Angeles (UCLA) 
\\ 
e-mail: \href{mailto:rushil@ucla.edu}{rushil@ucla.edu}}}
\begin{document}
	\maketitle
\begin{abstract}
    We prove that if $A\subset \{1,\dots,N\}$ has no nontrivial three-term arithmetic progressions, then $|A|\leq \exp(-c\log(N)^{1/6}\log\log(N)^{-1})N$ for some absolute constant $c>0$. To obtain this bound, we use an iterated variant of the sifting argument of Kelley and Meka, as well as an improved bootstrapping argument for Croot-Sisask almost-periodicity due to Bloom and Sisask.
\end{abstract}

\section{Introduction}
A three-term arithmetic progression is a set of the form $\{a,a+d,a+2d\}.$ We say a progression is nontrivial if $d\neq 0$. We study the following question, asked by Erd\H{o}s and Tur\'an \cite{ET} in 1936: 

\begin{Question}\label{mainquestion} What is the largest size of a set $A\subset\{1,\dots,N\}$ containing no nontrivial three-term arithmetic progressions?
\end{Question}

In 2023, Kelley and Meka \cite{KM} made a significant breakthrough, giving a quasipolynomial bound for this problem:
\begin{theorem}\label{KM-3APbound} If $A\subset\{1,\dots,N\}$ contains no nontrivial three-term arithmetic progressions, then $|A|\leq \exp(-\Omega((\log N)^{1/12}))N$.
\end{theorem}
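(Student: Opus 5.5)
The plan is to run the Kelley--Meka density increment strategy, phrased in a finite group model $\bbz/N'\bbz$ and transferred to $\{1,\dots,N\}$ by embedding $A$ into $\bbz/N'\bbz$ with $N'\approx 2N$ prime. This embedding preserves the property of having no nontrivial 3-progressions. The central dichotomy is the following. Let $A\subset B$ have relative density $\alpha$, where $B$ is a regular Bohr set of rank $d$ and radius $\rho$. Then either $A$ has about $\alpha^3|B|^2$ three-term progressions (weighted appropriately), or there is a Bohr set $B'\subset B$ on which the density of $A$ is at least $(1+c)\alpha$. Moreover $B'$ has rank $d+O(\log(1/\alpha)^{O(1)})$ and radius shrunk by a factor $\alpha^{O(1)}/d^{O(1)}$. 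Iterating the dichotomy $O(\log(1/\alpha))$ times forces the density above $1$. Hence progressions must appear once $N$ exceeds the final Bohr set size lower bound, roughly $\exp(-O(d\log(d/\alpha)))N$, which yields the bound $\alpha\ge \exp(-O((\log N)^{c}))$.

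The single increment step goes as follows. First, failure to have many progressions gives a large value of $\langle 1_A*1_A, f\rangle$-type correlations for the balanced function. Using the Kelley--Meka Hölder lifting, one upgrades this to $\|(1_A-\alpha)\circ(1_A-\alpha)\|_{L^p(B)}\ge \tfrac12\alpha^2$ with $p\approx\log(1/\alpha)$. One then runs the \emph{sifting} (dependent random choice) step. Writing the $p$-th moment as an average over $p$-tuples of translates produces sets $A_1=A\cap(A+s_1)\cap\dots$ and $A_2$ of density at least $\alpha^{O(p)}$ with $\langle 1_{A_1}\circ 1_{A_2},1_S\rangle\ge(1-\epsilon)$. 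Here $S=\{x:1_A\circ1_A(x)\ge(1+c)\alpha^2\}$ is the set of popular differences.

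Next comes the almost-periodicity step. By Croot--Sisask, applied relative to Bohr sets, the convolution $1_{A_1}*1_{A_2}$ is almost invariant under a large set of shifts $X$. Chang/Bogolyubov-type bootstrapping then places a Bohr set $B'$ of rank $O(\log(1/\alpha)^{4}\epsilon^{-2})$ inside the set of good shifts. Consequently $1_S*\mu_{B'}$ has large mass, which means some translate of $B'$ sees $1_A\circ1_A$ above $(1+c/2)\alpha^2$. This gives a density increment of $A$ on a translate of $B'$ or $B'+x$.

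The main obstacle is controlling the rank growth, since it drives the exponent: naively each increment adds $\log(1/\alpha)^{4}$ to the rank over $\log(1/\alpha)$ steps. I would first aim just for the stated exponent $1/12$ with crude bookkeeping. Careful tracking of the regularity of Bohr sets at each step (narrowing radii to keep $B'_{\rho'}$ regular, and keeping the $L^p$ lifting valid relative to $B$ rather than the whole group) is where most of the technical work lies.
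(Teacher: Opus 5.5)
The paper does not prove this statement itself: it is quoted from Kelley--Meka, and it also follows from Theorem \ref{3APbound}. Your outline is the same five-step scheme the paper refines, but it skips the step that makes the scheme work.

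\textbf{The gap: the unbalancing step is missing.} You go from the balanced bound $\|(\mathds{1}_A-\alpha)\circ(\mathds{1}_A-\alpha)\|_{L^p}\ge\frac12\alpha^2$ straight to sifting on $S=\{\mathds{1}_A\circ\mathds{1}_A\ge(1+c)\alpha^2\}$. Sifting needs the \emph{unbalanced} bound $\|\mathds{1}_A\circ\mathds{1}_A\|_{p'}\ge(1+2c)\alpha^2$. The sets $A\cap(A+s_1)\cap\dots$ come from expanding moments of $\mathds{1}_A\circ\mathds{1}_A$ itself. The mass of $\mu_{A_1}\circ\mu_{A_2}$ lying off $S$ is at most $4\bigl((1+c)\alpha^2/\|\mathds{1}_A\circ\mathds{1}_A\|_{p}\bigr)^{p}$ (cf.\ Lemma \ref{weightedsiftresult}).

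The balanced bound alone does not give this. A mean-zero $g\ge-\alpha^2$ can satisfy $\|g\|_p\ge\frac12\alpha^2$ by equalling $-\alpha^2$ on a set of density $2^{-p}$ and being $\approx0$ elsewhere; then $g+\alpha^2$ has no large values at all. What rules this out for $g=\mathds{1}_A\circ\mathds{1}_A-\alpha^2$ is spectral positivity, $\hat g=|\hat{\mathds{1}_A}|^2\ge0$. This gives $\langle g^m,\nu\rangle\ge0$ for every $m$ whenever $\hat\nu\ge0$, which forces the positive part to dominate (Lemma \ref{unbalance}). Similarly, H\"older lifting of the progression count produces $f\ast f$, and passing to $f\circ f$ needs an even exponent and $\hat\nu\ge0$ (Lemma \ref{firstcompare}).

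In the Bohr setting this is exactly what forces the measure $\nu=\mu_{B''}\circ\mu_{B''}\ast\mu_{B'''}\circ\mu_{B'''}$ of Lemma \ref{liftunbalance} instead of your $\mu_B$, whose Fourier transform can be negative. It is not just regularity bookkeeping. The sifting must then be done relative to $\mu_{C_1}\circ\mu_{C_2}$ with $C_2$ much narrower than $C_1$. Otherwise the parameter $\eta$ in Theorem \ref{AP} can be exponentially small in $d$, rather than $\alpha^{O(p)}$.

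\textbf{Secondary problem: the bookkeeping does not follow from the tools you list.} This part is repairable for exponent $1/12$.

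The rank increase per step is wrong. After Croot--Sisask, Chang's lemma only bounds the shift error by $2^{1-k}\sum_\gamma|\hat F(\gamma)|$. For $F=\mu_{A_1}\circ\mu_{A_2}\ast\mathds{1}_S$ the available bound is $(\mu(S)/\mu(A_1))^{1/2}=\alpha^{-O(p)}$. So you need $k\approx\lo{\alpha}^2$, and the rank grows by $\lo{\xi}\approx k^2\lo{\alpha}^4\approx\lo{\alpha}^8$ per step, not $\lo{\alpha}^4$. Reaching $k=O(1)$ is precisely this paper's new ingredient. Iterated sifting (Lemmas \ref{ff-iteratedsifting}, \ref{iteratedsiftresult}) arranges $\langle\mu_A\circ\mu_A,\mu_{A_i}\circ\mu_{A_i}\rangle=O(\sigma)$. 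Then, for $F=\mu_A\circ\mu_A\ast\mu_{A_1}\circ\mu_{A_2}$, Cauchy--Schwarz bounds $\sum|\hat F|$ by a constant times the main term.

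The radius losses also compound. A per-step loss $(\alpha/d)^{O(1)}$ over $O(\lo{\alpha})$ iterations gives a final Bohr set of density $\exp(-O(d\,\lo{\alpha}\log(d/\alpha)))$, by Proposition \ref{bohrsizebound}. You wrote $\exp(-O(d\log(d/\alpha)))$.

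Taken at face value, your numbers ($d\approx\lo{\alpha}^5$) give $\log N\lesssim\lo{\alpha}^6$. That is slightly stronger than Theorem \ref{3APbound}, which needs both iterated sifting and the $\sigma$-dependent radius control of Proposition \ref{prop-it}. So these numbers cannot come out of the argument as sketched. With honest accounting (total rank $\lo{\alpha}^9$, $\log(1/\rho)\approx\lo{\alpha}^2$) you get $\log N\lesssim\lo{\alpha}^{11}$, which suffices.

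Finally, the conclusion should read $\alpha\le\exp(-\Omega((\log N)^{c}))$, not $\alpha\ge\cdots$.
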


Prior to this, the best known bound was due to Bloom and Sisask \cite{BS3}, and was of the form $|A|\leq N/\log(N)^{1+\Omega(1)}$. 

One reason why Kelley and Meka's result is so significant is that the quasipolynomial shape of their bound matches the best known lower bound: the largest known subset $A\subset \{1,\dots,N\} $ containing no nontrivial three-term arithmetic progressions has size $\exp(-c\log(N)^{1/2})N$. A bound of this form was first proved by Behrend \cite{doi:10.1073/pnas.32.12.331}, and recently Elsholtz, Hunter, Proske, and Sauermann \cite{EHPS} have improved the implicit constant $c$. 

In view of these two estimates, the study of upper bounds in Question \ref{mainquestion} must now focus on increasing the exponent $1/12$. Later in 2023, Bloom and Sisask optimized the Kelley-Meka argument to produce the best known bound:
\begin{theorem}\label{BS-3APbound}  If $A\subset\{1,\dots,N\}$ contains no nontrivial three-term arithmetic progressions, then $|A|\leq \exp(-\Omega((\log N)^{1/9}))N$.
\end{theorem}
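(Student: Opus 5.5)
The statement is Bloom and Sisask's bound, Theorem~\ref{BS-3APbound}. I would prove it by a density increment on Bohr sets, run in the Kelley--Meka style. Throughout, write $\alpha=|A|/N$ and $L=\log(2/\alpha)$. The goal is to show the following. If $A$ has relative density $\alpha$ inside a regular Bohr set $B$ of rank $d$ and radius $\rho$, and $A$ has no nontrivial three-term progressions, then one of two things happens:
\begin{itemize}
\item[(i)] $A$ has $\gg \alpha^3|B|^2/\mathrm{stuff}$ progressions, which is impossible when $B$ is large; or
\item[(ii)] there is a Bohr set $B'\subset B$ with rank $d+O(L^{a})$ and radius $\rho\cdot \exp(-O(L^{b}))^{-1}$, on which some translate of $A$ has density at least $(1+c)\alpha$.
\end{itemize}
Iterating (ii) gives at most $O(L)$ increments, since the density doubles after $O(1/c)$ steps each time. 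At the end the final Bohr set has rank $O(L^{1+a})$ and size at least $\exp(-O(L^{1+a+b}\cdot\ldots))N$. Comparing this with the trivial count of trivial progressions forces $\log N\ll L^{9}$, which is the claimed bound. Before starting, I would embed $\{1,\dots,N\}$ in $\mathbb{Z}/N'\mathbb{Z}$ with $N'\approx 5N$ to kill wraparound.

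The increment step has three parts.

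\emph{Step 1: H\"older lifting.} If the progression count is deficient, then for the balanced function $f=1_A-\alpha 1_B$ (localized) we get $\langle f*f, f\rangle$ large. Using the Kelley--Meka spectral positivity trick, pass to a high even moment: $\|1_A\circ 1_A\|_{p}\ge (1+c)\alpha^2$ for some $p\ll L$, measured relative to nested Bohr sets $B_1,B_2\subset B$ of smaller radius.

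\emph{Step 2: Sifting.} Sift this large $p$-norm into a pair of large sets $A_1,A_2$ of density $\ge \exp(-O(pL))$ relative to $B_2$ with $\langle 1_{A_1}*1_{A_2}, 1_A\rangle\ge(1+c/2)\alpha|A_1||A_2|$. I would use the iterated (dependent random choice) sifting so that the densities are only $\alpha^{O(p)}$.

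\emph{Step 3: Almost-periodicity.} Apply Croot--Sisask almost-periodicity in Bohr sets to $1_{A_1}*1_{A_2}$ against $1_A$. This gives a Bohr set $B'$ of rank $d+O(L^{4}\cdot\ldots)$ (powers of $L$ coming from $p\sim L$ and density $\exp(-L^2)$) with $\langle \mu_{B'}*1_A, \ldots\rangle$ large, hence a translate with density $(1+c/4)\alpha$. Here I would use Bloom--Sisask's bootstrapped version, which iterates almost-periodicity on an $L^{k}$ scale. That saves a power of $L$ in the rank increment compared with Kelley--Meka, and it is what turns $1/12$ into $1/9$.

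The main obstacle is Step 3 together with the rank bookkeeping. I need the rank increase per step to be $O(L^{4})$ up to logarithmic factors, while the radius shrinks only by $\exp(-O(L))$ factors, so that the total cost after $O(L)$ steps gives $L^{9}$ in the final size exponent. Regularity of Bohr sets must also be maintained at every nested scale, which requires choosing the radii $B_1,B_2$ via the standard regular-radius lemma. I would first try to match the Kelley--Meka Bohr-set argument line by line, then substitute the bootstrapped almost-periodicity lemma and re-optimize $p$ and the almost-periodicity parameter $k$.
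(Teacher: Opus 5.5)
\textbf{There is a genuine gap: your outline does not establish the exponent $1/9$, and the bookkeeping you propose is internally inconsistent.} The paper does not reprove this statement; it is quoted from \cite{BS2} and is subsumed by Theorem~\ref{3APbound}. Your Kelley--Meka architecture is the right one.

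If each increment costs rank $O(L^{a})$ and shrinks the radius by $\exp(-O(L^{b}))$, then after $O(L)$ increments the rank is $O(L^{1+a})$ and $\log(1/\rho)=O(L^{1+b})$. By Proposition~\ref{bohrsizebound}, the final Bohr set then has density $\exp(-O(L^{2+a+b}))$, not $\exp(-O(L^{1+a+b}))$. With the values you say you need ($a=4$, $b=1$), this gives $\log N\ll L^{7}$, i.e.\ exponent $1/7$, not $1/9$. Likewise, removing one power of $L$ from the per-step rank lowers the final exponent $2+a+b$ by only one, so that alone cannot turn $1/12$ into $1/9$.

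The real gap is Step~3. That is where the exponent is decided, and you delegate it to ``Bloom--Sisask's bootstrapped version'', which is essentially citing the result you are asked to prove.

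\emph{What Step~3 costs.} Theorem~\ref{AP} followed by Lemma~\ref{Chang} costs rank $O(k^{2}\epsilon^{-2}L^{4})$ when the sifted sets have relative density $\alpha^{O(L)}$; the factor $k^2$ is missing from your count.

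\emph{What forces $k$.} The size of $k$, and of $\nu$ in Lemma~\ref{Chang}, is dictated by the bootstrapping estimate $\|\mu_X^{(k)}\ast F(\cdot+x)-\mu_X^{(k)}\ast F\|_\infty\le(\nu+2^{1-k})\sum_\gamma|\hat{F}(\gamma)|$. This must be small compared with $\sigma\mu(B)^{-1}$. Ordinary sifting, which is all your Step~2 provides, only gives $\sum_\gamma|\hat{F}(\gamma)|\le\alpha^{-1}\mu(B)^{-1}$. That forces $k\asymp L$ and $\nu\asymp\alpha$, so each step costs rank $O(L^{6})$ with radius loss $\exp(-O(L))$. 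The total is then $L^{7}\cdot L^{2}=L^{9}$. This is the accounting that genuinely yields $1/9$, and you would have to carry it out.

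\emph{What your $O(L^4)$ would need.} Rank $O(L^{4})$ per step requires $k=O(1)$. That in turn requires the extra input $\langle\mu_A\circ\mu_A,\mu_{A_i}\circ\mu_{A_i}\rangle\lesssim\sigma\mu(B)^{-1}$, and nothing in your sifting step provides it. Supplying this bound is exactly what the paper's iterated sifting (Lemmas~\ref{iteratedsifting} and~\ref{iteratedsiftresult}) is built for. Combined with the radius control of Proposition~\ref{prop-it}, it leads to the stronger Theorem~\ref{3APbound}.

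Two smaller misstatements. First, sifting does not make $\langle 1_{A_1}\ast 1_{A_2},1_A\rangle$ large. It makes $\mu_{A_1}\circ\mu_{A_2}$ concentrate on a superlevel set $S$ of $\mu_A\circ\mu_A$, with $A_1,A_2$ placed at two different Bohr scales, so that almost-periodicity can be applied to $\mu_{A_1}\circ\mu_{A_2}\ast\mathds{1}_S$. Second, the H\"older step pairs $f\ast f$ with $\mu_C$ for a localized dilate $C$ of $A$, not with $f$.
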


The main result of this paper is a further improved bound for Question \ref{mainquestion}.

\begin{theorem}\label{3APbound} Let $A\subset \{1,\dots,N\}$ have no nontrivial three-term arithmetic progressions. Then $|A|\leq \exp(-\log(N)^{1/6-o(1)})N$. More specifically, $|A|\leq \exp(-\Omega(\log(N)^{1/6}\log\log(N)^{-1/6}))N$.
\end{theorem}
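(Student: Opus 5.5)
We follow the Kelley--Meka density increment framework as optimized by Bloom and Sisask, organized as an iteration on Bohr sets. Fix a regular Bohr set $B$ of rank $d$ and radius $\nu$, and a set $A\subset B$ of relative density $\alpha$ with no nontrivial three-term progressions. The goal of one step is either to find many three-term progressions, giving a contradiction, or to pass to a regular Bohr set $B'$ of rank $d+O(\log(1/\alpha)^{a})$, with radius shrunk by a factor $\exp(-O(\log(1/\alpha)^{b}))$, on which some translate of $A$ has density at least $(1+c)\alpha$. Since the density can increase by a constant factor only $O(\log(1/\alpha))$ times, the final rank is $d\ll\log(1/\alpha)^{1+a}$. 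The final Bohr set has size about $\nu^{d}N$, so we need $\nu^{d}N\ge 1$. Solving $\log N\gtrsim \log(1/\alpha)^{\gamma}$ for the total exponent $\gamma$ gives a bound of the form $|A|\le\exp(-c\log N^{1/\gamma})N$, and we aim for $\gamma=6$.

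\textbf{Step 1 (sifting, iterated).} Starting from the progression-free hypothesis, we use H\"older lifting and spectral positivity, as in Kelley--Meka, to obtain a large $L^{p}$ norm, with $p\approx\log(1/\alpha)$, of the balanced convolution $1_A*1_{-A}-\alpha^2$ over $B$. Kelley and Meka then sift once to find large $A_1, A_2$ with $\langle 1_{A_1}*1_{A_2},1_S\rangle$ large, where $S$ is a superlevel set. We will instead sift iteratively. At each round we either already have a density increment of the right quality, or we pass to subsets of relative density at least $\alpha^{O(1)}$ on which the correlation improves. We arrange the rounds so that the accumulated losses cost only $O(\log(1/\alpha))$ in the exponent of the final density $\delta$ of $A_1,A_2$, rather than $O(p\log(1/\alpha))$. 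This should save a power of $\log(1/\alpha)$ in the almost-periodicity step.

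\textbf{Step 2 (almost-periodicity).} We apply Croot--Sisask almost-periodicity to $1_{A_1}*1_{A_2}$, tested against $1_S$, with the Bloom--Sisask bootstrapping. This yields a Bohr set $B'$ whose rank increases by only $O(\epsilon^{-2}\log(1/\delta)\log(1/\alpha))$, using a dependence of $\log(1/\epsilon)$ in place of $\epsilon^{-2}$ wherever the bootstrap allows. On $B'$, the function $1_A*\mu_{B'}$ is large on $S$, which converts into a density increment. With $\epsilon$ a constant and the improved $\delta$ from Step 1, the rank increase per step is about $\log(1/\alpha)^{4}$. The radius shrinks by $\exp(-O(\log(1/\alpha)))$, so $\log(1/\nu)\approx\log(1/\alpha)$ per step. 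Over all steps, $\log(1/\text{size})\approx d\log(1/\nu)\approx\log(1/\alpha)^{5}\cdot\log(1/\alpha)=\log(1/\alpha)^{6}$. Working out the exact bookkeeping of the $\log\log$ factors from regularity (radius $\sim 1/d$) gives the $\log\log(N)^{-1/6}$ loss.

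\textbf{Main obstacle.} The main difficulty is Step 1. We must show that the iterated sifting really trades the single lossy passage from an $L^{p}$ bound to a dense pair $A_1,A_2$ for a sequence of cheaper steps, each controlled by a density increment or by an $L^{p}$-to-$L^{2p}$ improvement. We must also check that this remains compatible with Bohr-set regularity, since each sift must be localized to a narrower Bohr set. We would first try a potential-function argument on $\log\langle 1_{A_1}*1_{A_2},1_S\rangle-\log(\mu(A_1)\mu(A_2))$. The aim is to show this quantity is monotone across rounds and bounded by $O(\log(1/\alpha))$.
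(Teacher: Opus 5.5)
Your radius bookkeeping fails, and this is exactly where the exponent $1/6$ (rather than $1/7$) is decided. In Step 2 you multiply the final rank $d\approx\log(1/\alpha)^5$ by the \emph{per-step} radius loss. Suppose the radius shrinks by $\exp(-O(\log(1/\alpha)))$ at each of the $O(\log(1/\alpha))$ increment steps. Then the final radius is $\exp(-O(\log(1/\alpha)^2))$, and Proposition \ref{bohrsizebound} gives only $\log(1/\mu(B))\lesssim\log(1/\alpha)^5\cdot\log(1/\alpha)^2$, i.e.\ exponent $1/7$. This is the ``loss of two logarithms'' the paper is built to avoid. Localizing an iterated sift makes it worse: up to $O(\log(1/\alpha))$ rounds, each narrowing by $c/d$, would cost $(c/d)^{O(\log(1/\alpha))}$ per increment.

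To reach $\log(1/\alpha)^6\log\log(1/\alpha)$, the \emph{cumulative} radius must be $\exp(-O(\log(1/\alpha)\log\log(1/\alpha)))$. The missing idea (Proposition \ref{prop-it}) is not to settle for a $(1+c)$ increment. The increment is $\alpha\mapsto\sigma\alpha$, where $\sigma\mu(B)^{-1}$ is the level of $\mu_A\circ\mu_A$ reached by the iterated sift. The radius then drops only by $(c/2d\log(1/\alpha))^{O(\log\sigma)}$: each sifting round doubles $\langle\mu_A\circ\mu_A,\mu_{Z_j}\circ\mu_{Z_{j+1}}\rangle$, so there are only $O(\log\sigma)$ rounds. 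Since $\prod_j\sigma_j\le\alpha^{-1}$, these losses telescope. This also needs Lemma \ref{narrowdensity}, which passes to $B_{c/d}$ rather than the standard $B_{c\alpha/d}$ (the latter alone costs a factor $\alpha$ per step). It further needs Chang's lemma applied with $\nu=\Omega(1)$.

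You have also misidentified what the iterated sifting buys. A single Kelley--Meka sift already gives $A_1,A_2$ of density $\alpha^{O(p)}=\alpha^{O(\log(1/\alpha))}$. The paper's iterated sets have the same order $\alpha^{p+O(\log(1/\alpha))}$, so there is no density saving. What matters is the stopping condition $\langle\mu_A\circ\mu_A,\mu_{A_i}\circ\mu_{A_i}\rangle\le 2^8\sigma\mu(B)^{-1}$ for $i=1,2$, a form of (\ref{goodbootstrapineq}). By Cauchy--Schwarz it gives $\sum_\gamma|\hat{F}(\gamma)|\lesssim\sigma\mu(B)^{-1}$ for $F=\mu_A\circ\mu_A\ast\mu_{A_1}\circ\mu_{A_2}$, the same order as the main term. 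This is what permits $k=O(1)$ in Theorem \ref{AP} and $\nu=\Omega(1)$ in Lemma \ref{Chang}. The almost-periodicity cost is about $k^2\log(1/\delta)^2$, so it gives rank increase $O(\log(1/\alpha)^4)$ only when $k=O(1)$. With only the trivial bound $\sum_\gamma|\hat F(\gamma)|\le\alpha^{-1}\mu(B)^{-1}$ you would need $k\approx\log(1/\alpha)$ and $\nu\lesssim\alpha$, losing in both rank and radius.

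When the stopping condition fails for some $A_i$, one re-sifts relative to $\mu_{A_i}\circ\mu_{A_i}$. Almost-periodicity requires the result to live in two Bohr sets of different widths, and achieving this is the probabilistic argument of Lemma \ref{iteratedsifting}. Your proposed potential is $\log\langle 1_{A_1}\ast 1_{A_2},1_S\rangle-\log(\mu(A_1)\mu(A_2))=\log\langle\mu_{A_1}\ast\mu_{A_2},1_S\rangle\le 0$. It does not track this quantity.
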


As is typical in additive combinatorics, there are technical aspects of the proof of Theorem \ref{3APbound} that are made much simpler if one considers the group $\bbf_q^n$ instead of $\{1,\dots,N\}$. We thus also prove the following.

\begin{theorem}\label{ff-3APbound} Let $q\geq 3$ be prime. Let $A\subset \bbf_q^n$ have no nontrivial three-term arithmetic progressions. Let $N=q^n$. Then $|A|\leq \exp(-\Omega(\log(N)^{1/5}))N$.
\end{theorem}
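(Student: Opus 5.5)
We prove Theorem \ref{ff-3APbound} by a density increment on subspaces, following Kelley--Meka with Bloom--Sisask's almost-periodicity bootstrapping. Write $\alpha=|A|/N$.

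The core is the following increment lemma. If $A\subset V$, with $V$ a subspace of codimension $d$ (or a coset of one), has density $\alpha$ and no nontrivial three-term progressions, then there is a subspace $V'\le V$ of codimension $O(\log(1/\alpha)^{c_1})$ in $V$ on which some translate of $A$ has density at least $(1+\Omega(1))\alpha$. The increment is multiplicative by a constant factor, not additive. Iterating it, the density doubles after $O(1)$ steps, so there are $O(\log(1/\alpha))$ steps in all. The final codimension is then
\[
O(\log(1/\alpha)^{c_1+1}).
\]
The iteration must stop before the codimension exceeds $n$, because the density cannot exceed $1$. This forces $\log(1/\alpha)^{c_1+1}\gg n \asymp \log N$ (treating $q$ as fixed, or tracking $\log q$). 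So we need $c_1=4$, which gives $\log(1/\alpha)\gg \log(N)^{1/5}$.

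The lemma is proved in three steps.

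\textbf{Step 1 (from few progressions to a large $L^p$ norm).} Since $A$ has no nontrivial progressions and $N$ is large compared with $\alpha^{-2}$, the count $\langle 1_A*1_A,1_{2\cdot A}\rangle$ is much smaller than its expected value $\alpha^3$. By Hölder this gives
\[
\|1_A\circ 1_A-\alpha^2\|_{p}\ge \tfrac12\alpha^2 \quad\text{for some } p\ll\log(1/\alpha).
\]
Using the Kelley--Meka positivity trick for even $p$, we upgrade this to a spread condition $\|1_A\circ 1_A\|_p\ge(1+\epsilon)\alpha^2$ with $\epsilon=\Omega(1)$.

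\textbf{Step 2 (sifting).} The sifting argument produces large sets $A_1,A_2\subset V$ with $\langle 1_{A_1}\circ 1_{A_2},1_S\rangle \ge (1-\epsilon/4)\mu(S)$. Here $S=\{x: 1_A\circ 1_A(x)\ge(1+\epsilon/2)\alpha^2\}$, and $A_1,A_2$ have density at least $\alpha^{O(p)}$. I would run an iterated variant of sifting: refine the pair repeatedly so the loss in density is only $\alpha^{O(p)}$ rather than something worse, and record the density parameter that enters Step 3.

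\textbf{Step 3 (almost periodicity).} Apply Croot--Sisask almost-periodicity to $1_{A_1}\circ 1_{A_2}$ against $1_S$, using the Bloom--Sisask bootstrapped version in $\mathbb F_q^n$. This gives a subspace $V'$ of codimension
\[
O\big(\epsilon^{-2}\, p\, \log(1/\alpha)^{?}\big)
\]
with $\langle \mu_{V'}*1_{A_1}\circ 1_{A_2},1_S\rangle\ge(1-\epsilon/2)\mu(S)$. Averaging then shows that $1_A\circ 1_A$ is at least $(1+\epsilon/4)\alpha^2$ on average over a translate of $V'$. Hence some translate of $A$ has density at least $(1+\epsilon/8)\alpha$ on a coset of $V'$.

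\textbf{Main obstacle: the exponent bookkeeping.} Plain Croot--Sisask gives codimension about $p^2\log(1/\alpha)^2$, so $c_1=4$, but the density loss in sifting can add further powers. The main work is to verify that with iterated sifting and Bloom--Sisask bootstrapping the codimension is $O(\log(1/\alpha)^4)$ exactly. I would first check that the $L^{2m}$ moment in the almost-periodicity step only needs $m\asymp\log(1/\alpha)$, paired with $\log(1/\text{density of }A_2)\asymp p\log(1/\alpha)$. Then I would confirm that the bootstrapping prevents these logarithms from compounding beyond degree $4$.
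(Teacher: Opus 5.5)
There is a genuine gap, and it is exactly at your question mark. The per-step codimension $O(\lo{\alpha}^4)$ is the entire content of the theorem, and the tools you list do not produce it.

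\textbf{Why the listed tools fall short.} Almost-periodicity gives a set $X$ with $\lo{\mu(X)}\lesssim \epsilon^{-2}k^2\lo{\alpha_1}\lo{\alpha_2}$, where $k$ is the number of self-convolutions of $\mu_X$. With $\alpha_i=\alpha^{O(\lo{\alpha})}$ and $\epsilon=\Omega(1)$ this is $k^2\lo{\alpha}^4$, so your ``$p^2\lo{\alpha}^2$'' tacitly assumes $k=O(1)$. The real obstruction is therefore not density loss in sifting but the size of $k$. To replace $\mu_X^{(k)}$ by $\mu_V$, with $V$ the annihilator of $\Delta_{1/2}(X)$ from Chang's lemma, one pays an error $2^{1-k}\sum_\gamma|\hat{F}(\gamma)|$, where $F=\mu_A\circ\mu_A\circ(\mu_{A_1}\circ\mu_{A_2})$. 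A priori one only knows $\sum_\gamma|\hat{F}(\gamma)|\le\|\mu_A\circ\mu_A\|_\infty\le\alpha^{-1}$. This forces $k\asymp\lo{\alpha}$, hence codimension $\lo{\alpha}^6$ per step and only $|A|\le\exp(-\Omega(\log(N)^{1/7}))N$ overall. Re-examining the moment parameter inside Croot--Sisask, as you propose, cannot remove this loss, because it comes from the bootstrapping step rather than from almost-periodicity. (Relatedly, in Step 3 you should pair against $\mu_A\circ\mu_A$, not $\mathds{1}_S$, since it is $|\hat{\mu_A}|^2$ that must enter the Fourier estimate.)

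\textbf{The missing idea: an energy bound on the sifted sets.} You need $A_1,A_2$ and a level $\sigma\ge 1+2^{-3}$ such that, for $S=\{\mu_A\circ\mu_A\ge(1-2^{-7})\sigma\}$, both $\langle\mathds{1}_S,\mu_{A_1}\circ\mu_{A_2}\rangle\ge 1-2^{-7}$ and $\langle\mu_A\circ\mu_A,\mu_{A_i}\circ\mu_{A_i}\rangle\le 2\sigma$ for $i=1,2$. Cauchy--Schwarz on the Fourier side then gives $\sum_\gamma|\hat{F}(\gamma)|\le\langle\mu_A\circ\mu_A,\mu_{A_1}\circ\mu_{A_1}\rangle^{1/2}\langle\mu_A\circ\mu_A,\mu_{A_2}\circ\mu_{A_2}\rangle^{1/2}\le 2\sigma$. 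So $k=O(1)$ suffices, Chang's lemma gives codimension $O(\lo{\alpha}^4)$, and one gets $\|\mu_V\ast\mu_A\|_\infty\ge(1-2^{-4})\sigma\ge 1+2^{-5}$.

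\textbf{What iterated sifting is for.} Securing this energy bound is the purpose of iterated sifting in the paper. It is not, as you suggest, to keep the density loss at $\alpha^{O(p)}$; one-shot sifting already achieves that. If the bound fails for some $A_i$, then $\|\mu_A\circ\mu_A\|_{1(\mu_{A_i}\circ\mu_{A_i})}>2\sigma$. One then sifts again inside $A_i$ with $p=1$, losing only a factor $\alpha^{O(1)}$ in density while at least doubling the level. Since $\mu_A\circ\mu_A\le\alpha^{-1}$, this stops after $O(\lo{\alpha})$ rounds. The final densities $\alpha^{O(p+\lo{\alpha})}$ are still $\alpha^{O(\lo{\alpha})}$, because $p\asymp\lo{\alpha}$.
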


This, by contrast, is not a new result, since better estimates can be obtained by the polynomial method \cite{EG}, which has no known analogue for the problem over $\{1,\dots,N\}$. 

\subsection*{Acknowledgements} We would like to thank Terence Tao for many helpful conversations and guidance. We would also like to thank Thomas Bloom and Olof Sisask for helpful conversations and for introducing the author to the weighted sifting argument in Lemma \ref{weightedsift}. We would also like to thank Fred Tyrrell and Mark Lewko for helpful comments on previous versions of this manuscript. The author is partially supported by NSF Grant DMS-2347850.

\section{Notation and Strategy of Proof}
\subsection{Definitions and Notation}
The conventions used in this paper are essentially the same as those in \cite{BS1}. We record the required definitions here. Throughout, $G$ will be a finite abelian group. To begin, for a quantity $\alpha\in(0,1]$, we let $\lo{\alpha} = \log(2/\alpha)$. Next, we record the various analytic quantities required for our argument.

\begin{definition}[Expectations, Inner Products, Measures, $L^p$ norms]
\leavevmode
\begin{itemize}  
    \item Given a function $f:G\to\bbr$, we define $\E_{x}f(x) = \frac{1}{|G|}\sum_{x\in G}f(x)$.
    \item Given a finite group $G$ and a nonnegative function $\mu:G\to \bbr$ with $\E_x\mu(x)=1$, we define 
\[\langle f,g\rangle_\mu = \E_xf(x)g(x)\mu(x)\quad\text{and}\quad \|f\|_{p(\mu)}= \left(\E_{x\in G}|f(x)|^p\mu(x)\right)^{1/p}\text{ for }p\in[1,\infty).\]
We call such functions $\mu$ probability measures on $G$. When no subscript $\mu$ is written, it is assumed that $\mu$ is identically $1$, i.e., the quantities above are taken with respect to the uniform probability measure on $G$.
\item Given functions $f,g:G\to\bbr$, we define the convolution and difference convolution
\[f\ast g(x) = \E_yf(y)g(x-y)\quad\text{and}\quad f\circ g(x)=\E_yf(y)g(x+y).\]
For an integer $k\geq 1$, we will let $f^{(k)}$ denote the $k$-fold convolution of $f$, i.e., $f\underbrace{\ast\dots\ast}_{k\text{ times}}f$.
\item Given a subset $A\subset G$, $\mu_A = \frac{|G|}{|A|}\mathds{1}_A$ is the indicator function of $A$, normalized to have $\|\mu_A\|_1=1$. 
\end{itemize}   
\end{definition}

\begin{definition}[Density, Relative Density] Given $A\subset G$, the quantity $\frac{|A|}{|G|}$ is the density of $A$. In a slight abuse of notation, we will also write this quantity as $\mu(A)$. Given sets $A\subset B\subset G$, the relative density of $A$ in $B$ is $\frac{|A|}{|B|}$.  
\end{definition}

\begin{definition}[Fourier Transform]  Given $f:G\to\bbr$, $\hat{G}$ is the group of homomorphisms from $G$ to $\bbc^\times$. We define $\hat{f}:\hat{G}\to\bbc$ by 
\[\hat{f}(\gamma)=\E_x f(x)\overline{\gamma(x)}.\]
\end{definition}

We record a number of standard facts about these quantities for use later on.
\begin{proposition}
Let $f,g,h:G\to\bbr$, and let $\mu$ be a probability measure on $G$. Then all the following are true:
\begin{itemize}
    \item $\langle f\ast g,h\rangle  = \langle f,h\circ g\rangle$.
    \item If $p\leq p'$, then $\|f\|_{p(\mu)}\leq \|f\|_{p'(\mu)}$.
    \item If $f\geq 0$, then $\|f\|_{1(\mu)} = \langle f,\mu\rangle$.
    \item $\hat{f\ast g}=\hat{f}\cdot\hat{g}$ and $\hat{f}\circ\hat{g}=\hat{f}\cdot\overline{\hat{g}}$.
    \item For $x\in G$, $f(x)=\sum_{\gamma\in\hat{G}}\hat{f}(\gamma)\gamma(x)$.
    \item $\langle f,g\rangle = \sum_{\gamma\in\hat{G}}\hat{f}(\gamma)\overline{\hat{g}(\gamma)}.$
\end{itemize}

\end{proposition}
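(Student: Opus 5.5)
Each item is a direct computation from the definitions: expand the expectation, substitute, and rearrange finite sums. There is no real obstacle; the only step needing care is the orthogonality relation for characters, which underlies the last three items.

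\emph{Adjointness and the measure identity.} For the first item, write
\[\langle f\ast g,h\rangle=\E_x\E_y f(y)g(x-y)h(x).\]
Substitute $x=y+z$, which is a bijection of $G$ for fixed $y$, and swap the two finite expectations to get
\[\E_y f(y)\,\E_z g(z)h(y+z)=\E_y f(y)\,(h\circ g)(y),\]
since $h\circ g(y)=\E_z h(z)g(y+z)$ up to commutativity of the product inside. Here I must be careful with the convention. By the paper's definition $h\circ g(y)=\E_z h(z)g(y+z)$, so I instead substitute $x-y=w$ and write the sum as $\E_y f(y)\E_x h(x)g(x-y)$. This matches the paper's intended convention for the adjoint identity up to the symmetric reindexing $z\mapsto -z$. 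I will state the identity with whichever ordering of $g,h$ the definition forces, and verify it by the change of variables above. The third item is immediate: if $f\ge0$ then $|f|=f$, so $\|f\|_{1(\mu)}=\E_x f\mu=\langle f,\mu\rangle$.

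\emph{Monotonicity of norms.} For $p\le p'$, apply Jensen's (or H\"older's) inequality with respect to the probability measure $\mu(x)\,dx/|G|$, which has total mass $1$ because $\E\mu=1$. Using the convex function $t\mapsto t^{p'/p}$ applied to $|f|^p$ gives
\[\Bigl(\E|f|^p\mu\Bigr)^{p'/p}\le \E|f|^{p'}\mu,\]
and taking $p'$-th roots yields $\|f\|_{p(\mu)}\le\|f\|_{p'(\mu)}$.

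\emph{Fourier identities.} For the remaining items, first prove orthogonality:
\[\E_x\gamma(x)\overline{\gamma'(x)}=\mathds{1}_{\gamma=\gamma'}.\]
If $\gamma\ne\gamma'$, pick $a$ with $(\gamma\overline{\gamma'})(a)\ne1$ and shift $x\mapsto x+a$; the average is then multiplied by $(\gamma\overline{\gamma'})(a)$, so it must vanish. Since $|\hat G|=|G|$ for finite abelian $G$, the characters form an orthonormal basis of functions on $G$ under $\E_x f\bar g$. Fourier inversion follows by expanding $f$ in this basis, where the coefficients are exactly $\hat f(\gamma)$, and Parseval follows by inserting the expansions and applying orthogonality. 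For convolution, compute
\[\hat{f\ast g}(\gamma)=\E_x\E_y f(y)g(x-y)\overline{\gamma(y)}\,\overline{\gamma(x-y)},\]
using multiplicativity of $\gamma$, then substitute $x-y=z$ to get $\hat f(\gamma)\hat g(\gamma)$. The difference-convolution item is the same computation: since $g$ is real, $\overline{\hat g(\gamma)}=\hat g(\bar\gamma)$, which produces the conjugate factor $\hat f\cdot\overline{\hat g}$. I read the statement's ``$\hat f\circ\hat g$'' as the transform $\hat{f\circ g}$, up to the $\gamma\leftrightarrow\bar\gamma$ convention coming from the sign in $f\circ g$.
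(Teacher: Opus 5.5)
The paper states these as standard facts without proof. Your arguments for the second, third, fifth and sixth items, and for $\widehat{f\ast g}=\hat f\,\hat g$, are the standard ones and are fine.

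The gap is in the first item and the second half of the fourth. With the paper's stated convention $f\circ g(x)=\E_y f(y)g(x+y)$, both identities as printed are false, and your write-up asserts them through steps that are not valid.

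In the first item, your substitution $x=y+z$ correctly gives $\E_y f(y)\,\E_z g(z)h(y+z)$. But that is $\E_y f(y)\,(g\circ h)(y)=\langle f,g\circ h\rangle$, not $\langle f,h\circ g\rangle$. Commuting the product $g(z)h(y+z)$ cannot move the shift from $h$ onto $g$. Your second version, $\E_y f(y)\,(h\circ g)(-y)$, is also a correct expression for the left side. However, replacing $-y$ by $y$ is not a free reindexing, because $f$ is evaluated at the same variable; what you actually have is $\langle f(-\,\cdot\,),h\circ g\rangle$. Concretely, in $\mathbb{Z}/3\mathbb{Z}$ take $f=h=\mathds{1}_{\{1\}}$ and $g=\mathds{1}_{\{0\}}$. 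Then $\langle f\ast g,h\rangle=1/9$ but $\langle f,h\circ g\rangle=0$.

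In the fourth item, the substitution $z=x+y$ gives $\widehat{f\circ g}(\gamma)=\E_y f(y)\gamma(y)\cdot\E_z g(z)\overline{\gamma(z)}=\overline{\hat f(\gamma)}\,\hat g(\gamma)$. So the conjugate falls on $\hat f$, not on $\hat g$. The product $\hat f\,\overline{\hat g}$ is $\widehat{g\circ f}$, i.e.\ $\widehat{f\circ g}$ evaluated at $\overline{\gamma}$. With the same $f,g$ and $\gamma(x)=e^{2\pi i x/3}$, one gets $\widehat{f\circ g}(\gamma)=\tfrac{1}{9}e^{2\pi i/3}$ but $\hat f(\gamma)\overline{\hat g(\gamma)}=\tfrac{1}{9}e^{-2\pi i/3}$. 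The identity $\overline{\hat g(\gamma)}=\hat g(\overline{\gamma})$ that you invoke does not change this.

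Both printed identities are exactly right for the opposite convention $f\circ g(x)=\E_y f(x+y)g(y)$, so they are typos rather than deep errors. They also appear harmless later in the paper, where they are applied to even functions like $\mu_A\circ\mu_A$ or enter only through moduli such as $|\hat F|$. Still, a proof cannot cover the discrepancy with ``up to the symmetric reindexing'' or ``up to the $\gamma\leftrightarrow\overline{\gamma}$ convention.'' State explicitly that under the given definition the correct identities are $\langle f\ast g,h\rangle=\langle f,g\circ h\rangle$ and $\widehat{f\circ g}=\overline{\hat f}\,\hat g$, which is exactly what your computations prove. Then note that the printed forms hold only with the swapped convention or under extra symmetry; for instance, the first holds when $f$ is even.
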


Finally, we will use standard asymptotic notation.
\begin{definition} Given functions $f,g:[1,\infty)\to [0,\infty)$, we say $f = O(g)$ or $f\lesssim g$ if there is an absolute constant $C$ with $f\leq Cg$. We say $f=\Omega(g)$ or $f\gtrsim g$ if $g = O(f)$. We say $f = o(g)$ if for all $c>0$, there is an $N\geq 1$ such that for all $x\geq N$, $f(x)\leq cg(x)$. 
Finally, given functions $f,g,h:[1,\infty)\to\bbr$, we say $f = g+O(h)$ if $|f-g|=O(h)$.
\end{definition}

\subsection{Strategy of Proof}
Our proof follows the strategy of Kelley and Meka \cite{KM}. In order to discuss our modifications of their argument, we first present a variation of the summary provided by Bloom and Sisask in \cite{BS1}. Here, suppose $A\subset G$ has density $\alpha$ and no nontrivial three-term progressions. We can establish that $A$ has a density increment on a translate of a subspace (or Bohr set) as follows. In order to focus on the main ideas, we are intentionally vague about some of the parameters. 

\begin{enumerate}
    \item H\"older lifting and unbalancing: $\|\mu_A\circ\mu_A\|_p$ is large for some $p = O(\lo{\alpha})$.
    \item Sifting: Let $t$ be a threshold, and let $S = \{x:\mu_A\circ\mu_A(x)\geq t\}$. There are sets $A_1,A_2$, each with density $\alpha^{O(p)}$, such that $\langle \mu_{A_1}\circ\mu_{A_2},\mathds{1}_S\rangle$ is large. 
    \item Croot-Sisask almost-periodicity: There is a (somewhat large) set $X$ such that \\
    $\langle \mu_X^{(k)}\ast \mu_{A_1}\circ\mu_{A_2},\mathds{1}_S\rangle$ is large. In particular, $\langle \mu_X^{(k)}\ast \mu_{A_1}\circ\mu_{A_2},\mu_A\circ\mu_A\rangle$ is large.
    \item Bootstrapping: there is a structured set (a subspace or a Bohr set) $V$ such that \\ 
    $\langle \mu_V\ast \mu_X^{(k)}\ast \mu_{A_1}\circ\mu_{A_2},\mu_A\circ\mu_A\rangle$ is large.
    \item Density increment: $\|\mu_V\ast\mu_A\|_\infty$ is large, so $A$ has a density increment on a translate of $V$.
\end{enumerate}

We recall in more detail the sifting step, as given in \cite[Lemma 8]{BS1}.
\begin{lemma}[Sifting]\label{sifting} Let $p\geq 1$ be an integer and $\epsilon,\delta>0$. Let $C_1,C_2\subset G$ and let $\mu=\mu_{C_1}\circ\mu_{C_2}$. For any $A\subset G$ with density $\alpha$, if 
\[S = \{x\in G:\mu_A\circ\mu_A(x)>(1-\epsilon)\|\mu_A\circ\mu_A\|_{p(\mu)}\},\]
then there are $A_1\subset C_1$and $A_2\subset C_2$ such that 
\[\min\left(\frac{|A_1|}{|C_1|},\frac{|A_2|}{|C_2|}\right) \gtrsim \left(\alpha \|\mu_A\circ\mu_A\|_{p(\mu)}\right)^{2p+O_{\epsilon,\delta}(1)}\]
and 
\[\langle \mathds{1}_S,\mu_{A_1}\circ\mu_{A_2}\rangle \geq 1-\delta.\]
\end{lemma}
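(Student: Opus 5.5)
We will prove the lemma by the standard Kelley--Meka dependent random choice argument, with translates of $C_1$ and $C_2$ cut out by $A$. Write $M=\|\mu_A\circ\mu_A\|_{p(\mu)}$. For $s=(s_1,\dots,s_k)\in G^k$, with $k$ an integer to be chosen as $p+O_{\epsilon,\delta}(1)$ (or a multiple of $p$), set
\[A_1(s)=C_1\cap\bigcap_{i}(A-s_i),\qquad A_2(s)=C_2\cap\bigcap_i(A-s_i).\]
Unwinding the definitions gives
\[\E_s\,\mu(C_1)\mu(C_2)\,\mathds{1}_{A_1(s)}(x)\mathds{1}_{A_2(s)}(y) = \mathds{1}_{C_1}(x)\mathds{1}_{C_2}(y)\,\bigl(\alpha^2\,\mu_A\circ\mu_A(y-x)\bigr)^k.\]
Summing over all $x,y$ and using $\mu=\mu_{C_1}\circ\mu_{C_2}$ gives the first moment identity
\[\E_s \frac{|A_1(s)||A_2(s)|}{|C_1||C_2|} = \alpha^{2k}\|\mu_A\circ\mu_A\|_{k(\mu)}^k \ge (\alpha M)^{2k}\cdot M^{-k}.\]
The inequality uses $\|\cdot\|_{k(\mu)}\geq\|\cdot\|_{p(\mu)}$ for $k\geq p$. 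Summing instead only over pairs with $y-x\notin S$, that is over the region where $\mu_A\circ\mu_A<(1-\epsilon)M$, gives a bad mass of
\[\E_s\frac{|A_1(s)||A_2(s)|}{|C_1||C_2|}\,\langle \mathds{1}_{S^c},\mu_{A_1(s)}\circ\mu_{A_2(s)}\rangle \le \alpha^{2k}(1-\epsilon)^kM^k.\]
Here $\circ$ is oriented so that $\mu_{A_1}\circ\mu_{A_2}$ is supported on $y-x$, matching $\mu$.

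Next, choose $k=\lceil p + C\epsilon^{-1}\log(2/\delta)\rceil$ and compare the two bounds. The good mass is at least $\alpha^{2k}M^k$, and the bad mass is at most a $(1-\epsilon)^k\le \delta/4$ fraction of it. This needs $\|\cdot\|_{k(\mu)}\ge \|\cdot\|_{p(\mu)}=M$, which holds by monotonicity of norms since $\mu$ is a probability measure. Writing $w(s)=|A_1(s)||A_2(s)|/(|C_1||C_2|)$, we get
\[\E_s w(s)\bigl(\langle \mathds{1}_{S^c},\mu_{A_1}\circ\mu_{A_2}\rangle - \delta/2\bigr) \le -\tfrac{\delta}{4}\E_s w(s) < 0.\]
So some $s$ has $w(s)>0$ and $\langle \mathds{1}_S,\mu_{A_1(s)}\circ\mu_{A_2(s)}\rangle\ge 1-\delta$.

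The main obstacle is that this $s$ must also make both relative densities large individually, not merely their product on average. We handle this by also discarding those $s$ whose $w(s)$ is below $\eta:=\tfrac{\delta}{8}\alpha^{2k}M^k$. These contribute at most $\eta$ to $\E w$, which is at most half of $\tfrac{\delta}{4}\E_s w$. The weighted averaging argument above therefore still goes through restricted to $w(s)\ge\eta$. For such $s$, each ratio $|A_i|/|C_i|\ge w(s)\ge\eta$, because the other factor is at most $1$. Since $M\ge\|\mu_A\circ\mu_A\|_1=1$, we have $\alpha^{2k}M^k \ge (\alpha M)^{2k}M^{-k}$. This is where the bound could lose: we need it in the form $(\alpha M)^{2p+O(1)}$, and the fix is to run the computation with $M$ normalized. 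Indeed $\alpha^{2k}M^k\ge (\alpha M)^{2k}$ whenever $\alpha^{2}M\ge \alpha^2M^2$ fails only if $M>1$. So instead we note $\alpha M\le 1$, since $\|\mu_A\circ\mu_A\|_\infty\le\alpha^{-1}$, which gives $\alpha^{2k}M^k=(\alpha M)^{k}\alpha^{k}\ge (\alpha M)^{k}(\alpha M)^{k}$ (as $\alpha\geq \alpha M\cdot\alpha$, i.e. $M\alpha\le1$, and indeed $\alpha\ge\alpha M$ would need $M\le1$). If this final comparison is off, the fallback is to take $k=2p+O(1)$ and absorb the $\alpha^{O(1)}$ and $\delta$ losses, whose logarithms are $O_{\epsilon,\delta}(1)\cdot\lo{\alpha M}$. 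This gives the stated exponent $2p+O_{\epsilon,\delta}(1)$ after taking the $\gtrsim$ constant into account.
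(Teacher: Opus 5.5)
Your dependent random choice setup, the first-moment identity $\E_s w(s)=\alpha^{2k}\|\mu_A\circ\mu_A\|_{k(\mu)}^k$, and the bad-mass comparison are all correct. (The factor $\mu(C_1)\mu(C_2)$ in your pointwise identity is spurious, but the summed identity is right.) The gap is in the last step. Discarding $s$ with $w(s)<\eta$ and using $|A_i(s)|/|C_i|\ge w(s)$ only gives relative densities $\gtrsim\delta\,\alpha^{2k}M^k=\delta(\alpha M)^{2k}M^{-k}$. For $M>1$ the factor $M^{-k}$ cannot be absorbed into $(\alpha M)^{O_{\epsilon,\delta}(1)}$, and taking $k=2p+O(1)$ only makes this worse. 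Your assertion $M\ge\|\mu_A\circ\mu_A\|_1=1$ is not valid, since $M$ is a norm in $L^p(\mu)$ rather than with respect to the uniform measure; in any case $M>1$ is exactly the bad case, not a help. This is the case the lemma's normalization is designed for. Take $G=\bbf_q^n$, $C_1=C_2=V$ a subspace of density $\beta$, and $A\subset V$ random of relative density $\alpha_0$. Then $\alpha=\alpha_0\beta$ and $M\approx\beta^{-1}$, so the claimed bound is $\alpha_0^{2p+O(1)}$, while your argument gives only about $\alpha_0^{2k}\beta^{k}$, which is arbitrarily worse as $\beta\to 0$. Your weaker bound would suffice for the uses in Lemma \ref{ff-iteratedsifting}, where $M\ge 1$ and only $\alpha^{O(p)}$ is needed, but it does not prove the stated lemma.

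The missing idea is to threshold each density separately and to bound the discarded mass by the first moment of the \emph{other} set. Every $y\in C_2$ lies in $A_2(s)$ with probability $\alpha^k$, so $\E_s|A_2(s)|/|C_2|=\alpha^k$, and hence
\[\E_s\, w(s)\mathds{1}_{\{|A_1(s)|<\eta|C_1|\}}\le \eta\,\E_s\frac{|A_2(s)|}{|C_2|}=\eta\,\alpha^k,\]
and symmetrically with the roles of $A_1$ and $A_2$ swapped. Choose $\eta=\frac{\delta}{16}(\alpha M)^k$. The total discarded mass is then at most $\frac{\delta}{8}\alpha^{2k}M^k\le\frac{\delta}{8}\E_s w(s)$, so your averaging argument survives. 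It yields an $s$ with $\langle\mathds{1}_S,\mu_{A_1(s)}\circ\mu_{A_2(s)}\rangle\ge 1-\delta$ and both relative densities at least $\frac{\delta}{16}(\alpha M)^{k}$, where $k=p+O_{\epsilon,\delta}(1)$. Since $\alpha M\le 1$ (because $\|\mu_A\circ\mu_A\|_\infty\le\alpha^{-1}$), this gives the stated bound, even with exponent $p$ in place of $2p$. The paper does not prove this lemma itself; it is quoted from \cite{BS1}. The mechanism above is, however, exactly that of the paper's weighted version, Lemma \ref{weightedsiftresult}. Under the size-biased measure $\P(s)\propto\alpha_1(s)\alpha_2(s)$ of Lemma \ref{weightedsift}, one has $\E_{\P}[\gamma_1/\alpha_1(s)]=(\alpha\|\mu_A\circ\mu_A\|_{k(\mu)})^{-k}\le(\alpha M)^{-k}$, and Markov's inequality then controls each density individually.
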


In \cite{BS2}, Bloom and Sisask give a quantitative improvement to the Kelley-Meka bound by improving step (4). There, they remark that one way to improve the bootstrapping procedure even further is to obtain upper bounds on the quantity 
\[\langle \mu_A\circ\mu_A,\mu_{A_1}\circ\mu_{A_1}\rangle^{1/2}\langle \mu_A\circ\mu_A,\mu_{A_2}\circ\mu_{A_2}\rangle^{1/2}.\]
Taking the Fourier transform and using the Cauchy-Schwarz inequality, we observe that
\[\langle \mu_A\circ\mu_A,\mu_{A_1}\circ\mu_{A_2}\rangle\leq \langle \mu_A\circ\mu_A,\mu_{A_1}\circ\mu_{A_1}\rangle^{1/2}\langle \mu_A\circ\mu_A,\mu_{A_2}\circ\mu_{A_2}\rangle^{1/2}.\]
In view of this, a nearly optimal input to the bootstrapping procedure would be an inequality of the form 
\begin{equation}\label{goodbootstrapineq} 
\langle \mu_A\circ\mu_A,\mu_{A_1}\circ\mu_{A_1}\rangle^{1/2}\langle \mu_A\circ\mu_A,\mu_{A_2}\circ\mu_{A_2}\rangle^{1/2} \leq C\langle \mu_A\circ\mu_A,\mu_{A_1}\circ\mu_{A_2}\rangle,\end{equation}

where $C$ is some absolute constant. 
If this inequality fails, say \[\langle \mu_A\circ\mu_A,\mu_{A_1}\circ\mu_{A_1}\rangle\geq 8\langle \mu_A\circ\mu_A,\mu_{A_1}\circ\mu_{A_2}\rangle,\]
then we can use the identity $\langle \mu_A\circ\mu_A,\mu_{A_1}\circ\mu_{A_1}\rangle = \|\mu_A\circ\mu_A\|_{1(\mu_{A_1}\circ\mu_{A_1})}$ and feed this lower bound into Lemma \ref{sifting}. We would obtain an even sparser pair of sets $A_3, A_4$, and an even higher level set, say $S'=\{x:\mu_A\circ\mu_A(x)\geq 4t\}$, such that $\langle \mu_{A_3}\circ\mu_{A_4},\mathds{1}_{S'}\rangle$ is large. In turn, we would have 
\[\langle\mu_A\circ\mu_A,\mu_{A_3}\circ\mu_{A_4}\rangle \geq 2 \langle\mu_A\circ\mu_A,\mu_{A_1}\circ\mu_{A_2}\rangle.\]
If the inequality (\ref{goodbootstrapineq}) still fails, we can iterate this. Since $\|\mu_A\circ\mu_A\|_\infty\leq \alpha^{-1}$, this can be iterated at most $O(\lo{\alpha})$ many times until the inequality (\ref{goodbootstrapineq}) is true. At each stage, the densities of the sets $A_{2j-1}$, $A_{2j}$ decrease by a factor of $\alpha^{O(1)}$, so even though they get sparser during this iteration, their densities will still be on the order of $\alpha^{O(\lo{\alpha})}$. We thus obtain quantitative savings in the bootstrapping procedure, without any significant losses in the other parameters.

The above summary is essentially all that is needed in the case $G=\bbf_q^n$, but additional complications arise when working relative to Bohr sets in $G=\bbz/N\bbz$. The most significant such obstacle is that in order to perform the almost-periodicity step (3), one needs $A_{2j-1}$ and $A_{2j}$ to be localized to different Bohr sets, one narrower than the other. Thus, it is unacceptable to sift relative to a measure like $\mu_{A_1}\circ\mu_{A_1}$. This can be circumvented using a probabilistic argument; the details are given in Lemma \ref{iteratedsifting}. Along the way, one needs slightly finer control over the densities produced by Lemma \ref{sifting}. To obtain this, we introduce a weighted variant of sifting in Lemma \ref{weightedsift}, which may be of independent interest.

Finally, we remark that the reciprocals of the exponents in Theorems \ref{3APbound} and \ref{ff-3APbound} differ by only $1+o(1)$ instead of $2$. A common feature of density increment arguments using Bohr sets such as those in \cite{BS2} and \cite{SS} is that at every step of the iteration, the radius of the Bohr set is dilated by a factor of $\alpha^{O(1)}$. Given that we have about $O(\lo{\alpha})$ iterations, this should result in a ``loss of two logarithms" between the bounds over $\bbf_q^n$ and $\bbz/N\bbz$ (cf. Lemma \ref{bohrsizebound}). To obtain Theorem \ref{3APbound}, we need to  obtain careful control of the radii of the Bohr sets involved in the density increment argument, and some reworking of the remainder of the argument is required to facilitate this. It is not clear whether a bound for Question \ref{mainquestion} of size $\exp(-\log(N)^{1/2-o(1)})$, nearly matching Behrend's example, could possibly be proved using a density increment involving Bohr sets. If it is possible, there would also need to be a loss of one logarithm between the $\bbf_q^n$ and $\bbz/N\bbz$ cases. See also \cite{S} for another density increment argument with comparably efficient control of the radii of Bohr sets.

\section{The Finite Field Case}\label{ff-mainsection}
In this section, we will prove Theorem \ref{ff-3APbound}. Throughout this section, we will let $G=\bbf_q^n$, where $q\geq 3$ is a prime and $N=q^n$. Theorem \ref{ff-3APbound} will follow from iterating the following proposition.

\begin{proposition}\label{prop-ff-it} if $A\subset G$ has density $\alpha$, then at least one of the following is true.
\begin{enumerate}
    \item $|\langle \mu_A\ast\mu_A,\mu_{-2\cdot A}\rangle - 1|\leq 2^{-1}$, or 
    \item There is a subspace $V$ with codimension $O(\lo{\alpha}^4)$ such that $\|\mu_A\ast \mu_V\|_\infty \geq 1+2^{-5}$. 
\end{enumerate}
\end{proposition}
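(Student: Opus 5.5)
We follow the five-step Kelley--Meka outline from the strategy section, with the iterated sifting inserted before the almost-periodicity step. Suppose alternative (1) fails. Then $\langle \mu_A\ast\mu_A,\mu_{-2\cdot A}\rangle$ differs from $1$ by more than $1/2$.

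\textbf{Step 1: H\"older lifting and unbalancing.} Writing $\mu_A\ast\mu_A-1$ in Fourier space and using $\mu_{-2\cdot A}$, we obtain $\|\mu_A\circ\mu_A-1\|_p\gtrsim 1$ for some even $p=O(\lo{\alpha})$. This is the standard Kelley--Meka H\"older lifting; both cases (count too small or too large) are handled by it. The unbalancing lemma, which uses the nonnegativity of $\widehat{\mu_A\circ\mu_A}$, then gives $\|\mu_A\circ\mu_A\|_{p'}\geq 1+c$ for some $p'=O(\lo{\alpha})$ and an absolute constant $c>0$.

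\textbf{Step 2: Iterated sifting.} We start the sifting with $C_1=C_2=G$, so $\mu\equiv 1$, and apply Lemma~\ref{sifting} with threshold level $t_0=(1-\epsilon)(1+c)$. This gives sets $A_1,A_2$ of density $\alpha^{O(\lo{\alpha})}$ with $\langle \mathds{1}_S,\mu_{A_1}\circ\mu_{A_2}\rangle\geq 1-\delta$, and hence $\langle \mu_A\circ\mu_A,\mu_{A_1}\circ\mu_{A_2}\rangle\geq (1-\delta)t_0$.

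We then iterate. Suppose (\ref{goodbootstrapineq}) fails with a large constant $C$, say $\langle\mu_A\circ\mu_A,\mu_{A_1}\circ\mu_{A_1}\rangle\geq 8\langle\mu_A\circ\mu_A,\mu_{A_1}\circ\mu_{A_2}\rangle$. Re-sift with $C_1=C_2=A_1$ and $p=1$, which is allowed since $\|\cdot\|_{1(\mu_{A_1}\circ\mu_{A_1})}$ equals that inner product. This produces $A_3,A_4\subset A_1$ of relative density $\alpha^{O(1)}$ whose difference measure is concentrated where $\mu_A\circ\mu_A$ is at least twice the previous level. Since $\|\mu_A\circ\mu_A\|_\infty\leq\alpha^{-1}$, there are at most $O(\lo{\alpha})$ doublings. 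We therefore end with $A_1,A_2$ of density $\alpha^{O(\lo{\alpha})}$, a threshold $t\geq 1+c/2$ with $S=\{\mu_A\circ\mu_A\geq t\}$ carrying mass $\geq 1-\delta$ of $\mu_{A_1}\circ\mu_{A_2}$, and (\ref{goodbootstrapineq}) holding with an absolute constant.

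\textbf{Step 3: Croot--Sisask almost-periodicity.} Apply almost-periodicity to $\mu_{A_1}\circ\mu_{A_2}$ tested against $\mathds{1}_S$. This gives $X$ of density $\alpha^{O(\lo{\alpha}\cdot k^2)}$ with
\[\langle \mu_X^{(k)}\ast\mu_{A_1}\circ\mu_{A_2},\mathds{1}_S\rangle\geq 1-2\delta,\]
where we take $k=O(\lo{\alpha})$.

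\textbf{Step 4: Bootstrapping.} Follow the Bloom--Sisask improved bootstrapping. Choose $V$ as the annihilator of the large spectrum $\mathrm{Spec}_\eta(\mu_X)$. Chang's lemma bounds its codimension by $O(\eta^{-2}\lo{\mu(X)})$. The Fourier error between $\mu_V\ast\mu_X^{(k)}$ and $\mu_X^{(k)}$, integrated against $\mu_{A_1}\circ\mu_{A_2}$ and tested on $\mu_A\circ\mu_A$, is controlled by Cauchy--Schwarz. This is exactly the product $\langle\mu_A\circ\mu_A,\mu_{A_1}\circ\mu_{A_1}\rangle^{1/2}\langle\mu_A\circ\mu_A,\mu_{A_2}\circ\mu_{A_2}\rangle^{1/2}$. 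By (\ref{goodbootstrapineq}) this is $O(t)$ rather than up to $\alpha^{-1}$, so a constant $\eta$ with $\eta^k$ small suffices. The codimension is then $O(\lo{\mu(X)})=O(\lo{\alpha}^2 k^2)=O(\lo{\alpha}^4)$.

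\textbf{Step 5: Density increment.} This yields $\langle \mu_V\ast\mu_X^{(k)}\ast\mu_{A_1}\circ\mu_{A_2},\mu_A\circ\mu_A\rangle\geq 1+2^{-4}$. Since the left-hand side is an average of $\mu_V\ast\mu_A\circ\mu_A$ against a probability measure, we get $\|\mu_V\ast\mu_A\circ\mu_A\|_\infty\geq 1+2^{-4}$, and hence $\|\mu_V\ast\mu_A\|_\infty\geq 1+2^{-5}$.

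\textbf{Main obstacle.} The delicate step is the iteration in Step 2. Each re-sift must keep densities at $\alpha^{O(1)}$ relative to the previous sets, and the level sets must genuinely double, so that the final densities stay at $\alpha^{O(\lo{\alpha})}$. Getting this is what makes the codimension $\lo{\alpha}^4$ rather than larger. The bookkeeping of the parameters $\epsilon$ and $\delta$ across $O(\lo{\alpha})$ rounds needs care; we would take $\epsilon$ and $\delta$ to be fixed constants in each round, since each round only needs a constant-factor gain.
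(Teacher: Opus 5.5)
\textbf{Gap: the codimension count in Steps 3--4.} These steps do not give codimension $O(\lo{\alpha}^4)$. Theorem~\ref{ff-AP} produces $X$ with $\mu(X)\geq \exp(-O(k^2\lo{\alpha_1}\lo{\alpha_2}))$. Your $A_1,A_2$ have density $\alpha^{O(\lo{\alpha})}$, as any sifting at exponent $p\asymp\lo{\alpha}$ gives, so this is $\exp(-O(k^2\lo{\alpha}^4))$, not $\alpha^{O(k^2\lo{\alpha})}$. With your choice $k=O(\lo{\alpha})$, Chang's lemma then yields codimension $O(\lo{\alpha}^6)$.

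Notice also that if $k\asymp\lo{\alpha}$ were affordable, Step 2 would be pointless. The trivial bound $\langle \mu_A\circ\mu_A,\mu_{A_i}\circ\mu_{A_i}\rangle\leq\|\mu_A\circ\mu_A\|_\infty\leq\alpha^{-1}$ already makes the Fourier error $2^{1-k}\alpha^{-1}$ negligible; that is just the Kelley--Meka bookkeeping. The entire gain from iterated sifting is that it permits $k=O(1)$.

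\textbf{What $k=O(1)$ requires, and why your stopping rule misses it.} The Cauchy--Schwarz product must be $O(t)$, with $t$ the \emph{same} threshold that defines $S$. This is because the main term is only known to satisfy $\langle \mu_A\circ\mu_A,\mu_X^{(k)}\ast\mu_{A_1}\circ\mu_{A_2}\rangle\geq(1-O(\delta))t$. Your stopping rule bounds $\langle\mu_A\circ\mu_A,\mu_{A_i}\circ\mu_{A_i}\rangle$ by $8\langle\mu_A\circ\mu_A,\mu_{A_1}\circ\mu_{A_2}\rangle$. Sifting gives only a \emph{lower} bound $(1-\delta)t$ on the latter, which could be as large as $\alpha^{-1}$. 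So ``by (\ref{goodbootstrapineq}) this is $O(t)$'' does not follow.

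\textbf{How the paper fixes the stopping rule.} Lemma~\ref{ff-iteratedsifting} builds a nested chain of single sets $A^1\supset\dots\supset A^J$. Each has relative density $\gtrsim\alpha^{O(1)}$ in the previous one, and $\|\mu_A\circ\mu_A\|_{1(\mu_{A^j}\circ\mu_{A^j})}$ at least doubles along the chain, so $J=O(\lo{\alpha})$. It then sifts $\mu_{A^J}\circ\mu_{A^J}$ with $p=1$ at level $(1-2^{-7})\sigma$, where $\sigma=\|\mu_A\circ\mu_A\|_{1(\mu_{A^J}\circ\mu_{A^J})}$. Maximality of the chain gives $\langle\mu_A\circ\mu_A,\mu_{A_i}\circ\mu_{A_i}\rangle\leq 2\sigma$. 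The comparison is against the level of the measure being sifted, not against the output pair. With that:
\begin{itemize}
\item the Fourier error is at most $2^{2-k}\sigma$, so a constant $k$ suffices;
\item $\mu(X)\geq\exp(-O(\lo{\alpha}^4))$, so the codimension is $O(\lo{\alpha}^4)$;
\item the increment is $(1-2^{-4})\sigma\geq 1+2^{-5}$, since $\sigma\geq 1+2^{-3}$.
\end{itemize}
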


Before proceeding with the proof of this proposition, we will record the proof of Theorem \ref{ff-3APbound}.
\begin{proof}[Proof of Theorem \ref{ff-3APbound}, assuming Proposition \ref{prop-ff-it}] Let $C>0$ be an absolute constant to be chosen later. Let $\alpha$ be the density of $A$ in $G$. Let $J$ be a maximal positive integer such that there exist subspaces $G=V_0\supset V_1\supset\dots\supset V_J$ and translates $0=x_0,x_1,\dots,x_J$ such that:

\begin{itemize}
    \item For all $j\geq 1$, the relative density of $(A+x_j)\cap V_j$ in $V_j$ is at least $(1+2^{-5})$ times the relative density of $(A+x_{j-1})\cap V_{j-1}$ in $V_{j-1}$, and 
    \item For each $j\geq 1$, the codimension of $V_{j}$ is at most $C\lo{\alpha}^4$ times the codimension of $V_{j-1}$.
\end{itemize}
Observe that by the first bullet point, we must have $J=O(\lo{\alpha})$, since the relative density of a set in another set can never exceed $1$.

We now apply Proposition \ref{prop-ff-it} with $V_J$ in place of $G$ and $(A+x_J)\cap V_J$ in place of $A$. Observe that the relative density of $(A+x_J)\cap V_J$ in $V_J$ is at least $\alpha$. If the constant $C$ is chosen large enough, then we cannot be in case (2) of Proposition \ref{prop-ff-it}, so we must be in case (1). This implies that the number of three-term progressions in $A+x_J$ is at least 
\[\frac{1}{2}\alpha^3|V_J|^2\geq \frac{1}{2}\alpha^3\exp(-O(\lo{\alpha}^5))N^2.\]
The number of trivial three-term progressions in $A$, on the other hand, is $\alpha N$. We thus must have 
\[N\leq 2\alpha^{-2}\exp(O(\lo{\alpha}^5)) = \exp(O(\lo{\alpha}^5)),\]
which rearranges to the desired inequality.
\end{proof}

We will follow the five-step procedure outlined in Section 2. We begin by recording the conclusion of the H\"older lifting and unbalancing steps from \cite{BS1}. 
\begin{lemma}\label{ff-holderunbalance} Let $A\subset G$ have density $\alpha$. Then if $|\langle \mu_A\ast\mu_A,\mu_{-2\cdot A}\rangle - 1|\geq 2^{-1}$, there is an exponent $p = O(\lo{\alpha})$ such that $\|\mu_A\circ\mu_A\|_{p}\geq 1+2^{-3}$. 
\end{lemma}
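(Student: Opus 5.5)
This is the Kelley–Meka Hölder lifting followed by unbalancing, in the form given in \cite{BS1}. The plan is to pass from a deviation in the 3AP count to a large $L^p$ norm of a centred convolution, and then remove the centring using the spectral positivity of $\mu_A\circ\mu_A$. Throughout write $f=\mu_A-1$, which has mean zero.

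\textbf{Step 1 (Hölder lifting).} Expanding $\mu_A=1+f$ in the first two slots of $\langle \mu_A\ast\mu_A,\mu_{-2\cdot A}\rangle$, every term containing exactly one copy of $f$ vanishes, because $\E f=0$ and $\E\mu_{-2\cdot A}=1$. We are left with
\[\langle \mu_A\ast\mu_A,\mu_{-2\cdot A}\rangle -1=\langle f\ast f,\mu_{-2\cdot A}\rangle .\]
Here we use that $q$ is odd, so dilation by $2$ is a bijection. The hypothesis gives $|\langle f\ast f,\mu_{-2\cdot A}\rangle|\geq 1/2$. Hölder's inequality with exponent $p$ and dual exponent $p'$, together with $\|\mu_{-2\cdot A}\|_{p'}=\alpha^{-1/p}$, gives $\|f\ast f\|_p\geq \tfrac12\alpha^{1/p}$. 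Taking $p\asymp\lo{\alpha}$ makes this $\gtrsim 1$.

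We want a difference convolution of a single function, so we rewrite $f\ast f(x)=\E_y f(y)f(x-y)$ as $f\circ g$ with $g(z)=f(-z)$. Since $|\hat f|=|\hat g|$, a Fourier/Hölder argument in the physical variable (as in \cite{KM,BS1}) reduces this to a lower bound $\|f\circ f\|_{p}\gtrsim 1$ for some even $p=O(\lo{\alpha})$. Concretely, for even $p$ the quantity $\|f\ast f\|_p^p$ equals a sum over $\gamma_1+\dots+\gamma_p=0$ of products of $\hat f(\gamma_i)^2$ and conjugates. Bounding this in absolute value by the same sum with $|\hat f(\gamma_i)|^2$ shows $\|f\ast f\|_p\le\|f\circ f\|_p$, since for $f\circ f$ the Fourier coefficients are exactly $|\hat f|^2\geq 0$.

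\textbf{Step 2 (unbalancing).} Now $F=f\circ f=\mu_A\circ\mu_A-1$ has $\hat F=|\hat f|^2\ge0$. This positivity forces $\E F^k\geq 0$ for every integer $k$, by the same Fourier expansion, and $\E F=0$. The standard unbalancing lemma (\cite[Lemma 7]{BS1}) then says: if $\|F\|_p\geq \epsilon$ and all odd moments are nonnegative, then $\|1+F\|_{p'}\geq 1+\epsilon/2$ for some $p'=O(p\,\epsilon^{-1}\log(1/\epsilon))$. The proof splits $\E|F|^p=\E F^p+2\E F_-^p$ using $F_-\le 1$, and compares with $\E(1+F)^{p'}$ via the inequality $2+2x^{k}\ge$-type bounds for $x\geq -1$.

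\textbf{Obstacle and constants.} With $\epsilon$ an absolute constant, we get $p'=O(\lo{\alpha})$ and $\|\mu_A\circ\mu_A\|_{p'}\geq 1+c$. The constants must be tracked to reach exactly $1+2^{-3}$. Since the statement allows $p=O(\lo{\alpha})$, we can first enlarge $p$ in Step 1 by a constant factor to make $\tfrac12\alpha^{1/p}$ close to $1/2$, and then run unbalancing with $\epsilon$ near $1/2$. I expect the main care to be needed in this constant bookkeeping and in the odd-$p$ case of Step 1, which we sidestep by rounding $p$ up to an even integer.
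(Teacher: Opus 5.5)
Your proposal is correct and matches the paper's route: the paper gives no separate proof and simply records this lemma as the output of the H\"older lifting and unbalancing steps of \cite{BS1}, which is exactly your argument (H\"older against $\mu_{-2\cdot A}$, then the even-$p$ Fourier comparison $\|f\ast f\|_p\le\|f\circ f\|_p$, then the unbalancing lemma recorded in the paper as Lemma~\ref{unbalance}). On constants, choosing $p$ even with $\alpha^{1/p}\ge 1/2$ already gives $\|\mu_A\circ\mu_A-1\|_p\ge 2^{-2}$, and Lemma~\ref{unbalance} with $\epsilon=2^{-2}$ and $\nu\equiv 1$ yields exactly $1+2^{-3}$, so no further adjustment of $p$ or $\epsilon$ is needed.
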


We now record the main application of sifting that we will use in our argument.

\begin{lemma}\label{ff-iteratedsifting} Let $p\geq 1$ be an integer. Let $A\subset G$ have density $\alpha$ and $\|\mu_A\circ\mu_A\|_{p}\geq 1+2^{-3}$. Then there is a parameter $\sigma\in [1+2^{-3},\alpha^{-1}]$ and there are sets $A_1,A_2\subset G$, each with density $\alpha^{O(p+\lo{\alpha})}$ in $G$, such that if \[S = \{x:\mu_A\circ\mu_A(x)\geq (1-2^{-7})\sigma\},\] 
then
\[\langle \mathds{1}_S,\mu_{A_1}\circ\mu_{A_2}\rangle \geq 1-2^{-7},\]
and for each $i\in\{1,2\}$,
\[\langle \mu_A\circ\mu_A,\mu_{A_i}\circ\mu_{A_i}\rangle \leq 2\sigma.\]
\end{lemma}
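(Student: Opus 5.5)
We will prove the lemma by the iteration sketched in Section 2, applying Lemma \ref{sifting} repeatedly with $G$-uniform sets at the first step and self-difference measures at later steps. Write $F=\mu_A\circ\mu_A$.

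\textbf{Base step.} Apply Lemma \ref{sifting} with $C_1=C_2=G$ (so $\mu\equiv 1$), exponent $p$, and small absolute constants $\epsilon=\delta=2^{-7}$. Set $\sigma_0=\|F\|_p\geq 1+2^{-3}$. This produces $B_1,B_2$ of density $(\alpha\sigma_0)^{2p+O(1)}\geq \alpha^{O(p)}$ with $\langle \mathds 1_{S_0},\mu_{B_1}\circ\mu_{B_2}\rangle\ge 1-2^{-7}$, where $S_0=\{F>(1-2^{-7})\sigma_0\}$. If both $\langle F,\mu_{B_i}\circ\mu_{B_i}\rangle\le 2\sigma_0$, we stop with $\sigma=\sigma_0$.

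\textbf{Iterative step.} Otherwise some $i$ has $\|F\|_{1(\mu_{B_i}\circ\mu_{B_i})}=\langle F,\mu_{B_i}\circ\mu_{B_i}\rangle>2\sigma_0$. Apply Lemma \ref{sifting} again with $C_1=C_2=B_i$, exponent $1$, and the same $\epsilon,\delta$. Set $\sigma_1=\|F\|_{1(\mu_{B_i}\circ\mu_{B_i})}>2\sigma_0$. This gives $B_1',B_2'\subset B_i$ of relative density $\gtrsim(\alpha\sigma_1)^{O(1)}\ge\alpha^{O(1)}$, hence density $\alpha^{O(p)+O(1)}$ in $G$, concentrated on $\{F>(1-2^{-7})\sigma_1\}$ with mass at least $1-2^{-7}$. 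We repeat this: at stage $j$ we either stop with $\sigma=\sigma_j$, or find $\sigma_{j+1}>2\sigma_j$. Since $\sigma_j\le\|F\|_\infty\le\alpha^{-1}$ and $\sigma_j\geq 2^j\sigma_0$, there are at most $O(\lo{\alpha})$ stages. The final sets therefore have density $\alpha^{O(p)+O(\lo{\alpha})}$, as required.

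\textbf{Checking the range of $\sigma$ and the uniform constants.} We have $\sigma\ge\sigma_0\ge 1+2^{-3}$. For the upper bound, concentration on the level set gives $(1-2^{-7})^2\sigma\le\langle F,\mu_{A_1}\circ\mu_{A_2}\rangle\le\alpha^{-1}$. So we get $\sigma\le\alpha^{-1}$ up to a harmless constant; if needed, we fix this by choosing the threshold normalisation slightly differently, or by capping at $\|F\|_\infty$.

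\textbf{Main obstacle.} The main obstacle is making sure the density loss per stage is $\alpha^{O(1)}$ with an absolute implicit constant, independent of $j$. This requires that the $O_{\epsilon,\delta}(1)$ term in Lemma \ref{sifting} uses fixed $\epsilon,\delta$ at every stage. It also requires that the exponent $2\cdot 1+O(1)$ at later stages does not compound beyond $\alpha^{O(1)}$ each time; the factor $(\alpha\sigma_j)$ is at least $\alpha$, so this holds. We must also make sure the sets used as $C_1,C_2$ in Lemma \ref{sifting} can be nested, which is fine in $\bbf_q^n$.
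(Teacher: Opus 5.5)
Your proposal is correct and is essentially the paper's argument. You sift once at exponent $p$ with $C_1=C_2=G$, then repeatedly re-sift at exponent $1$ inside whichever set violates $\langle \mu_A\circ\mu_A,\mu_{A_i}\circ\mu_{A_i}\rangle\le 2\sigma$, and the doubling of $\sigma$ together with $\sigma\le\|\mu_A\circ\mu_A\|_\infty\le\alpha^{-1}$ limits the process to $O(\lo{\alpha})$ stages; the paper phrases this as a maximal nested chain $A^1\supset\dots\supset A^J$ rather than a stopping process. Your detour when checking $\sigma\le\alpha^{-1}$ is unnecessary. Each $\sigma_j$ is an $L^p$ or $L^1$ norm of $\mu_A\circ\mu_A$ against a probability measure, so it is at most $\alpha^{-1}$ exactly, as you already noted in the iterative step.
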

\begin{proof} The proof will proceed by iterating Lemma \ref{sifting}. We apply Lemma \ref{sifting} with $C_1=C_2=G$ and $\epsilon=\delta=2^{-7}$ to obtain sets $A_1^1,A_2^1\subset G$, each with density $\alpha^{O(p)}$, such that, if 
\[S^1 = \{x:\mu_A\circ\mu_A(x)\geq (1-2^{-7})\|\mu_A\circ\mu_A\|_{p}\},\]
then $\langle \mathds{1}_{S^1},\mu_{A_1^1}\circ\mu_{A_2^1}\rangle \geq 1-2^{-7}.$
If, for each $i\in\{1,2\}$, $\langle \mu_A\circ\mu_A,\mu_{A_i^1}\circ\mu_{A_i^1}\rangle \leq 2\|\mu_A\circ\mu_A\|_{p}$, then we are done with $A_1=A_1^1$, $A_2=A_2^1$, and $\sigma=\|\mu_A\circ\mu_A\|_p$. Otherwise, if this inequality fails for some $i\in\{1,2\}$, we have \[\|\mu_A\circ\mu_A\|_{1(\mu_{A_i^1}\circ\mu_{A_i^1})} = \langle \mu_A\circ\mu_A,\mu_{A_i^1}\circ\mu_{A_i^1}\rangle\geq 2\|\mu_A\circ\mu_A\|_p.\] 
Let $c_1,c_2>0$ be constants to be chosen later. Let $A^1 = A_i^1$, and let $A^1,A^2,A^3,\dots,A^J$ be a maximal sequence of subsets of $G$ such that, for all $j\in\{1,\dots,J-1\}$,
\begin{itemize}
    \item $A^{j+1}\subset A^j$,
    \item $\frac{|A^{j+1}|}{|A^j|}\geq c_1\alpha^{c_2}$, and 
    \item $\|\mu_A\circ\mu_A\|_{1(\mu_{A^{j+1}}\circ\mu_{A^{j+1}})}\geq 2\|\mu_A\circ\mu_A\|_{1(\mu_{A^{j}}\circ\mu_{A^{j}})}$.
\end{itemize}
Since $\mu_A\circ\mu_A(x)\leq \alpha^{-1}$ for all $x\in G$, $\|\mu_A\circ\mu_A\|_{1(\mu)}\leq \alpha^{-1}$ for any probability measure $\mu$. Since $\|\mu_A\circ\mu_A\|_{1(\mu_{A^1}\circ\mu_{A^1})}\geq 1$, we must have $J = O(\lo{\alpha})$. 
We now apply Lemma \ref{sifting} with $C_1=C_2=A^J$, $p=1$, and $\epsilon=\delta=2^{-7}$. Provided $c_1$ was chosen small enough and $c_2$ large enough, we can find sets $A_1$ and $A_2$, each with relative density at most $c_1\alpha^{c_2}$ in $A^J$, such that if $S = \{x:\mu_A\circ\mu_A(x)\geq (1-2^{-7})\|\mu_A\circ\mu_A\|_{1(\mu_{A^J}\circ\mu_{A^J})}\}$, then 
\[\langle \mathds{1}_S,\mu_{A_1}\circ\mu_{A_2}\rangle \geq 1-2^{-7}.\]
Moreover, by the maximality of $(A^1,\dots,A^J)$, for each $i\in\{1,2\},$ we must have 
\[2\|\mu_A\circ\mu_A\|_{1(\mu_{A^J}\circ\mu_{A^J})} \geq \|\mu_A\circ\mu_A\|_{1(\mu_{A_i}\circ\mu_{A_i})} = \langle \mu_A\circ\mu_A,\mu_{A_i}\circ\mu_{A_i}\rangle. \]
Finally, for each $i\in\{1,2\}$, we have 
\[\frac{|A_i|}{|G|} = \frac{|A_i|}{|A^J|}\cdot \left(\prod_{j=1}^{J-1}\frac{|A^{j+1}|}{|A^{j}|}\right)\cdot \frac{|A^1|}{|G|} \geq \alpha^{O(\lo{\alpha}+p)}.\]
We have thus proved the proposition with $\sigma = \|\mu_A\circ\mu_A\|_{1(\mu_{A^J}\circ\mu_{A^J})}$. 
\end{proof}

 We record the almost-periodicity result we will need to complete the argument. This version is a special case of \cite[Theorem 3.2]{SS}.
\begin{theorem}[$L^\infty$ almost-periodicity]\label{ff-AP} Let $\epsilon>0$ and $k\geq 1$. Let $S\subset G$, $A_1\subset G$, $A_2\subset G$, and suppose $A_1$ and $A_2$ have densities $\alpha_1$ and $\alpha_2$,respectively. There is a set $X\subset G$ of size
\[|X|\gtrsim \exp(-O(\epsilon^{-2}k^2\lo{\alpha_1}\lo{\alpha_2}))|G|\]
such that 
\[\|\mu_X^{(k)}\ast \mu_{A_1}\circ\mu_{A_2}\ast \mathds{1}_S-\mu_{A_1}\circ\mu_{A_2}\ast\mathds{1}_S\|_\infty\leq \epsilon.\]
\end{theorem}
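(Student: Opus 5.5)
The plan is to prove Theorem \ref{ff-AP} by the standard Croot--Sisask random sampling argument, upgraded from $L^p$ to $L^\infty$ via the $k$-fold iteration that appears in \cite{SS}. Write $F=\mu_{A_1}\circ\mu_{A_2}\ast\mathds{1}_S = \mu_{A_1}\ast g$ with $g=\mu_{-A_2}\ast \mathds{1}_S$ (up to reflection conventions), so that $F(x)=\E_{a\in A_1}g(x-a)$ and $0\le g\le 1$. The first step is $L^p$ almost-periodicity. Sample $a_1,\dots,a_m$ independently and uniformly from $A_1$, and approximate $F$ by $\frac1m\sum_i g(x-a_i)$. By Marcinkiewicz--Zygmund (or Khintchine) applied pointwise in $x$ and then integrated, the $L^{p}(\nu)$ error is at most $O(\sqrt{p/m})\|g\|_{\infty}$ for any probability measure $\nu$. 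Choosing $m\asymp \epsilon^{-2}p$ gives an error at most $\epsilon$ with probability at least $1/2$. Pigeonholing over translates then shows there is a set $T$ with $|T|\ge (\alpha_1/2)^{m}|G|$ such that $\|\tau_t F-F\|_{p(\nu)}\le 2\epsilon$ for all $t\in T$. The underlying fact is that the sample $(a_i+t)$ is still good for many $t$, because $(A_1+t)^m$ overlaps $A_1^m$ in a set of density $\alpha_1^m$ on average.

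The second step converts $L^p$ to $L^\infty$ and is where $\lo{\alpha_2}$ enters. The idea is to use the measure $\nu=\mu_{A_2}$-type weighting, via the dual formulation $\langle \mu_X^{(k)}\ast F - F, \delta_x\rangle$, to control the supremum. Equivalently, I would write $F(x)=\langle \mu_{A_1}\ast \mathds{1}_S(\cdot), \mu_{A_2}(\cdot - x)\rangle$ and apply the $L^p$ statement with $p\asymp \lo{\alpha_2}$, measured against the normalized indicator of a translate of $A_2$. Hölder then gives that an $L^p(\mu_{A_2})$ error of $\epsilon$ costs only a factor $\|\mu_{A_2}\|_{p'}\le \alpha_2^{-1/p}=O(1)$. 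This yields $\|\tau_tF-F\|_\infty\lesssim\epsilon$ for $t$ in a set $T$ of density $\exp(-O(\epsilon^{-2}\lo{\alpha_1}\lo{\alpha_2}))$. Because $\mu_{A_2}$ is a fixed measure only after translation by $x$, the sampling bound must be made uniform in $x$. I would handle this by sampling in $A_1$ and measuring error in $L^p(\mu_{A_1}\ast\ldots)$ symmetrically, following \cite{SS}.

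The final step passes to $k$-fold convolutions. From $\|\tau_t F-F\|_\infty\le\epsilon/k$ for all $t\in T$ (obtained by replacing $\epsilon$ with $\epsilon/k$, which produces the $k^2$ factor), the triangle inequality gives $\|\tau_{t_1+\dots+t_k}F-F\|_\infty\le \epsilon$ for $t_i\in T$. Averaging over $t_i\in X:=T$ then gives $\|\mu_X^{(k)}\ast F-F\|_\infty\le\epsilon$. This is immediate, and the size bound on $X$ is the one claimed.

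The main obstacle is the second step, namely obtaining an $L^\infty$ conclusion with a cost of only $\lo{\alpha_1}\lo{\alpha_2}$ rather than $\lo{\alpha_1}\cdot\lo{\alpha_2}\cdot\log(1/\epsilon)$ or a cost depending on $|S|$. I would first try the $L^p(\mu_{A_2}\text{-translate})$ plus Hölder route, making the randomness uniform by proving the moment bound for $\E_x$ of the $p$-th moment weighted by $\mu_{A_2}(\cdot-x)$. Since the theorem is quoted as a special case of \cite[Theorem 3.2]{SS}, I could alternatively cite it directly.
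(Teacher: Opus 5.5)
Your outline is correct in its essentials. It is the Schoen--Sisask argument behind \cite[Theorem 3.2]{SS}, which the paper quotes without proof: Croot--Sisask sampling, H\"older against a translate of $\mu_{A_2}$ with $p\asymp\lo{\alpha_2}$, then $\epsilon\mapsto\epsilon/k$ and the triangle inequality for $\mu_X^{(k)}$. Your final step is fine as written. However, two things in the write-up are wrong or confused, and the second is exactly the step you flag as the main obstacle.

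First, the conclusion of your Step 1 does not hold for an arbitrary probability measure $\nu$. The sampling bound does hold in $L^p(\nu)$ for any $\nu$. The passage from good samples to almost-periods does not. If the samples $(b_i+t)_i$ and $(b_i+t')_i$ are both good, you know $\|h_b(\cdot+t)-h\|\leq\epsilon$ and $\|h_b(\cdot+t')-h\|\leq\epsilon$. To deduce $\|h(\cdot+t-t')-h\|\leq 2\epsilon$ you must translate these inequalities, which needs a translation-invariant norm. So Step 1 is only valid for the uniform measure.

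Second, the uniform measure is all you need, and the worry about uniformity in $x$ disappears. Measuring the error in $L^p(\mu_{A_2})$, or sampling ``symmetrically'', is not needed. It is also not what produces the factor $\|\mu_{A_2}\|_{p'}$; that factor is exactly the price of H\"older from the uniform norm. Write $h=\mu_{-A_1}\ast\mathds{1}_S$, so that $F=\mu_{A_1}\circ\mu_{A_2}\ast\mathds{1}_S=\mu_{A_2}\ast h$ and $F(x)=\langle h,\mu_{x-A_2}\rangle$. Then for every $x$ and $t$,
\[|F(x+t)-F(x)|=|\langle h(\cdot+t)-h,\mu_{x-A_2}\rangle|\leq \|h(\cdot+t)-h\|_{p}\,\|\mu_{x-A_2}\|_{p/(p-1)}=\alpha_2^{-1/p}\,\|h(\cdot+t)-h\|_{p},\]
with uniform norms. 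The right-hand side does not depend on $x$, so an $L^p$ almost-period of $h$ is automatically an $L^\infty$ almost-period of $F$.

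Now apply the uniform version of Step 1 to $h(y)=\E_{a\in A_1}\mathds{1}_S(y+a)$. The error is absolute because $0\leq\mathds{1}_S\leq 1$, so nothing depends on $S$. Take $p\asymp\lo{\alpha_2}$, so that $\alpha_2^{-1/p}=O(1)$, and $m\asymp\epsilon^{-2}k^2p$ samples from $A_1$. The resulting set of almost-periods has size at least $\frac12\alpha_1^m|G|$, which is the claimed bound. This is precisely your ``Equivalently, \dots'' sentence; the rest of Step 2 should be dropped.
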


We will also need Chang's lemma. This version is a special case of \cite[Lemma 4.36]{TV}.
\begin{lemma}\label{ff-Chang} Let $X\subset G$ have density $\xi$. Let $\Delta_{1/2}(X) = \{\gamma\in G:|\hat{\mu_X}(\gamma)|\geq 1/2\}$. Then the subspace of $V$ orthogonal to all characters in $\Delta_{1/2}(X)$ has codimension $O(\lo{\xi})$.
\end{lemma}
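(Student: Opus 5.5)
We will prove the statement in its dual form. Identify $\hat{G}$ with $G$, and take a maximal linearly independent subset $\Lambda=\{\gamma_1,\dots,\gamma_d\}$ of $\Delta_{1/2}(X)$. Every $\gamma\in\Delta_{1/2}(X)$ is an $\bbf_q$-linear combination of the $\gamma_j$. Hence any $x$ with $\gamma_j(x)=1$ for all $j$ also satisfies $\gamma(x)=1$. So the subspace $V=\{x:\gamma(x)=1 \text{ for all } \gamma\in\Delta_{1/2}(X)\}$ equals $\{x:\gamma_j(x)=1,\ 1\le j\le d\}$, which has codimension exactly $d$. The whole task is therefore to show $d=O(\lo{\xi})$. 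This is the standard Chang/Rudin argument, and linear independence plays the role that dissociativity plays in general groups.

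The key probabilistic input is the following. Since $\gamma_1,\dots,\gamma_d$ are linearly independent, the map $x\mapsto(\gamma_1(x),\dots,\gamma_d(x))$ pushes the uniform measure on $G$ to the uniform measure on $\mu_q^d$, where $\mu_q$ is the group of $q$-th roots of unity. Thus under uniform $x$ the values $\gamma_j(x)$ are independent uniform $q$-th roots of unity. Each has mean zero, because the $\gamma_j$ are nontrivial. By Khintchine's inequality for sums of independent bounded mean-zero complex random variables (applied to real and imaginary parts, e.g.\ via Hoeffding's inequality), for any coefficients $|c_j|\le 1$ and any $p\ge 2$ we get
\[\Bigl\|\sum_{j=1}^d c_j\gamma_j\Bigr\|_p\lesssim \sqrt{pd}.\]

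Now we run the duality argument. Choose $c_j$ of modulus one with $c_j\overline{\hat{\mu_X}(\gamma_j)}=|\hat{\mu_X}(\gamma_j)|$. Then
\[\frac d2\le\sum_j|\hat{\mu_X}(\gamma_j)| = \E_x\mu_X(x)\sum_j c_j\overline{\gamma_j(x)}\le \|\mu_X\|_{p'}\Bigl\|\sum_j c_j\gamma_j\Bigr\|_p,\]
where $p'$ is the dual exponent. Since $\|\mu_X\|_{p'}=\xi^{-1/p}$, choosing $p=\max(2,\lo{\xi})$ makes $\xi^{-1/p}=O(1)$. This gives $d\lesssim\sqrt{d\lo{\xi}}$, hence $d=O(\lo{\xi})$, as required.

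The only step needing any care is the Khintchine-type moment bound for $q$-th roots of unity, which is routine. Everything else is linear algebra together with Hölder's inequality. Alternatively, one could simply cite \cite[Lemma 4.36]{TV}, of which the statement is a special case.
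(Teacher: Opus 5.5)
Your argument is correct. Two small remarks. The paper gives no proof of this lemma; it cites \cite[Lemma 4.36]{TV}. You also read the statement's typos correctly: $\gamma$ ranges over $\hat{G}\cong G$, and $V$ is a subspace of $G$.

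\textbf{Comparison of routes.} The cited result is Chang's lemma. In a general finite abelian group it is usually proved by taking a maximal \emph{dissociated} subset $\Lambda$ of the large spectrum and bounding $|\Lambda|$ with Rudin's inequality. One then observes that the spectrum lies in the $\{-1,0,1\}$-combinations of $\Lambda$. In $\bbf_q^n$ this gives $\mathrm{codim}\,V\le|\Lambda|$.

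You instead take a maximal \emph{linearly independent} subset, which is the natural choice in $\bbf_q^n$. This has two payoffs:
\begin{itemize}
\item The characters $\gamma_1,\dots,\gamma_d$ become genuinely independent uniform $q$-th roots of unity, so Rudin's inequality reduces to Khintchine/Hoeffding.
\item The annihilator of $\Delta_{1/2}(X)$ has codimension exactly $d$, with no detour through $\{-1,0,1\}$-spans.
\end{itemize}
Your route gives a short, self-contained proof with no dissociativity machinery. The citation gives validity in arbitrary finite abelian groups.

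\textbf{A cosmetic slip.} With the paper's convention $\hat{f}(\gamma)=\E_x f(x)\overline{\gamma(x)}$ and your choice $c_j\overline{\hat{\mu_X}(\gamma_j)}=|\hat{\mu_X}(\gamma_j)|$, the identity should read
\[\sum_j|\hat{\mu_X}(\gamma_j)|=\E_x\mu_X(x)\sum_j c_j\gamma_j(x),\]
without the conjugate on $\gamma_j$. This is harmless: the $\overline{\gamma_j}$ are also independent uniform $q$-th roots of unity, so the same $L^p$ bound applies to either sum.
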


With these prelimiary results recorded, we are now ready to prove Proposition \ref{prop-ff-it}.
\begin{proof}[Proof of Proposition \ref{prop-ff-it}] Let $A\subset G$ have density $\alpha$ and suppose that we are not in case (1). By Lemma \ref{ff-holderunbalance}, there is an exponent $p = O(\lo{\alpha})$ such that $\|\mu_A\circ\mu_A\|_p\geq 1+2^{-3}$. By Lemma \ref{ff-iteratedsifting}, we may find sets $A_1,A_2\subset G$, each having density $\alpha^{O(p)}$, and a parameter $\sigma$ with $1+2^{-3}\leq \sigma\leq \alpha^{-1}$ such that if 
\[S = \{x:\mu_A\circ\mu_A(x)\geq (1-2^{-7})\sigma\},\] 
then
\[\langle \mathds{1}_S,\mu_{A_1}\circ\mu_{A_2}\rangle \geq 1-2^{-7},\]
and for each $i\in\{1,2\}$,
\[\langle \mu_A\circ\mu_A,\mu_{A_i}\circ\mu_{A_i}\rangle \leq 2\sigma.\]
Let $k\geq 1$ be an integer to be chosen later. By Theorem \ref{ff-AP}, there is a set $X$ with density at least $\exp(-O(k^2\lo{\alpha_1}\lo{\alpha_2})) = \exp(-O(k^2\lo{\alpha}^4))$
such that 
\[\langle \mathds{1}_S,\mu_{A_1}\circ\mu_{A_2}\ast \mu_X^{(k)}\rangle \geq 1-2^{-6}.\]
We then have, by the definition of $S$,
\[\langle \mu_A\circ\mu_A,\mu_{A_1}\circ\mu_{A_2}\ast \mu_X^{(k)}\rangle \geq (1-2^{-5})\sigma.\]
Let $F = \mu_A\circ\mu_A\circ(\mu_{A_1}\circ\mu_{A_2})$. Let $V$ be the subspace of $G$ orthogonal to the characters in $\Delta_{1/2}(X)$, so that by Lemma \ref{ff-Chang}, the codimension of $V$ is $O(k^2\lo{\alpha}^4)$. We have, for any $t\in V$, 
\begin{align*}\|\mu_X^{(k)}\ast F(\cdot + t) - \mu_X^{(k)}\ast F\|_\infty &\leq \sum_{\gamma\in \hat{G}}|\hat{\mu_X}(\gamma)|^k|\hat{F}(\gamma)||\gamma(t)-1| \\
&\leq 2\sum_{\gamma\notin \Delta_{1/2}(X)}|\hat{\mu_X}(\gamma)|^k|\hat{F}(\gamma)| \\
&\leq 2^{1-k} \sum_{\gamma\in\hat{G}}|\hat{F}(\gamma)|\\
&\leq 2^{1-k}\left(\sum_{\gamma\in\hat{G}}|\hat{\mu_A}(\gamma)|^2|\hat{\mu_{A_1}}(\gamma)|^2\right)^{1/2}\left(\sum_{\gamma\in\hat{G}}|\hat{\mu_A}(\gamma)|^2|\hat{\mu_{A_2}}(\gamma)|^2\right)^{1/2} \\
&=2^{1-k}\langle \mu_A\circ\mu_A,\mu_{A_1}\circ\mu_{A_1}\rangle^{1/2}\langle \mu_A\circ\mu_A,\mu_{A_2}\circ\mu_{A_2}\rangle^{1/2} \\
&\leq 2^{2-k}\sigma.
\end{align*}
We may choose $k=O(1)$ so that this quantity is at most $2^{-5}\sigma$. By averaging, we then have 
\[\|\mu_V\ast \mu_X^{(k)}\ast F - \mu_X^{(k)}\ast F\|_\infty \leq 2^{-5}\sigma,\]
and thus 
\[\|\mu_V\ast\mu_X^{(k)}\ast \mu_A\circ\mu_A\circ(\mu_{A_1}\circ\mu_{A_2})\|_\infty \geq (1-2^{-4})\sigma.\]
We then have 
\[\|\mu_V\ast \mu_A\|_\infty \geq (1-2^{-4})\sigma\geq (1-2^{-4})(1+2^{-3})\geq 1+2^{-5},\]
establishing case (2).
\end{proof}

\textbf{Remark.} One may observe that this argument really gives a density increment of the form $\alpha\to \sigma\alpha$ for some $\sigma\geq 1+2^{-5}$, instead of just $\alpha \to (1+\Omega(1))\alpha$. Thus, in some cases, we obtain a density increment much greater than what is used here. This provides no quantitative savings in the finite field case, but will be useful in controlling the radius of Bohr sets in the integer case.

\section{The Integer Case}\label{mainsection}
The purpose of this section is to prove Theorem \ref{3APbound}. Throughout, we will let $G=\bbz/N\bbz$, where $N$ is an odd positive integer. Unlike in $\bbf_q^n$, $G$ may not have any nontrivial proper subgroups. To circumvent this issue, as usual, we will use regular Bohr sets as approximate subgroups relative to which we can perform a density increment argument. The preliminary facts about Bohr sets we will need are recorded in Appendix \ref{Bohrappendix}.

In order to obtain Theorem \ref{3APbound}, we will need to obtain careful control of the radius of the Bohr set on which we have a density increment. In turn, throughout this section, we will let $c>0$ be an absolute constant chosen to facilitate several applications of Lemmas \ref{narrowbohrsupport}, \ref{regularapproximation}, \ref{firstcompare}, and \ref{narrowdensity}. We will often say that an inequality is true provided $c$ is chosen small enough, and for the sake of concreteness, we observe that $c=1/(2^{13}\cdot 100)$ suffices for all such assertions.

We also define
\begin{definition} Let $B$ be a regular Bohr set with rank $d$. Given another Bohr set $B'$, we will say $B'\subset_r B$ if $B'$ is regular and $B' = B_\delta$ for some $\delta\in [r/2,r]$.
\end{definition}

Theorem \ref{3APbound} will follow from iteration of the following proposition, whose proof will occupy the bulk of the section.
\begin{proposition}\label{prop-it} Let $B$ be a regular Bohr set with rank $d$ and radius $\rho$. Let $A\subset B$ have relative density $\alpha$. Let $B^1\subset_{c/d}B$, and let $B_2\subset_{c/2d}B^1$. Then at least one of the following is true. 
\begin{enumerate}
    \item Many progressions: $A$ has at least $\frac{1}{4}\alpha^3N^2\mu(B^1)\mu(B^2)$ three-term arithmetic progressions.
    \item Density increment: There is a parameter $\sigma\in[1+2^{-12},\alpha^{-1}]$, a Bohr set $B'$ with rank at most $d+O(\lo{\alpha}^4)$ and radius at least $\Omega((c/2d\lo{\alpha})^{O(\log(\sigma))}\rho)$, and a translate $x$ such that the relative density of $(A+x)\cap B'$ in $B'$ is at least $\sigma\alpha$. 
\end{enumerate}
\end{proposition}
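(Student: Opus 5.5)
We follow the five-step outline of Section 2, localized to Bohr sets, with the finite field proof of Proposition \ref{prop-ff-it} as the template.

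\textbf{Step 1: H\"older lifting and unbalancing.} Suppose case (1) fails. The count of three-term progressions is governed by $\langle \mu_A\ast\mu_A,\mu_{-2\cdot A}\rangle$ taken relative to $\mu_{B^1}$ and $\mu_{B^2}$. Failure of (1) means this count deviates from the expected value by a constant factor. Then the relative version of Lemma \ref{ff-holderunbalance} from \cite{BS1} applies. It gives either an immediate density increment $\|\mu_A\ast\mu_{B'}\|_\infty\geq (1+2^{-12})\alpha^{-1}\mu(B)^{-1}$ on a slightly narrower regular Bohr set, which is case (2) with $\sigma=1+2^{-12}$, or an exponent $p=O(\lo{\alpha})$ with $\|\mu_A\circ\mu_A\|_{p(\mu)}\geq 1+2^{-3}$. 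Here $\mu=\mu_{C_1}\circ\mu_{C_2}$, where $C_1,C_2$ are translates of $B^1$, $B^2$ (or of narrower Bohr sets), and $\mu_A\circ\mu_A$ is normalized relative to $B$.

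\textbf{Step 2: iterated sifting.} I would prove a Bohr analogue of Lemma \ref{ff-iteratedsifting}, which is what Lemma \ref{iteratedsifting} presumably states. Start from Lemma \ref{sifting} and iterate whenever $\langle \mu_A\circ\mu_A,\mu_{A_i}\circ\mu_{A_i}\rangle\geq 2\sigma_j$.

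The main obstacle is that the sets being sifted must remain localized to two Bohr sets, one narrower than the other, so that Croot--Sisask can be applied later. We therefore cannot sift relative to $\mu_{A_1}\circ\mu_{A_1}$. Instead, at each stage I would pick a random translate and intersect with a new, narrower regular Bohr set $B^{j+1}\subset_{c/d\lo{\alpha}}B^j$. By averaging, and using Lemma \ref{narrowdensity} together with the regularity comparisons in Lemma \ref{firstcompare}, both $\mu_{A_1}\circ\mu_{A_1}$ and its mass on the level set survive up to a factor of $1+O(c)$. The iteration doubles the level $\sigma_j$ each time, so it runs at most $O(\log\sigma)$ times, which is at most $O(\lo{\alpha})$. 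Each step costs density $\alpha^{O(1)}$ (this is where the weighted sifting of Lemma \ref{weightedsift} keeps the exponents additive) and shrinks the radius by a factor $c/2d\lo{\alpha}$.

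This accounting is what produces the radius bound $(c/2d\lo{\alpha})^{O(\log\sigma)}\rho$. The iteration terminates with $A_1\subset x_1+B^{(1)}$ and $A_2\subset x_2+B^{(2)}$, both of relative density $\alpha^{O(\lo{\alpha})}$, such that $\langle \mathds{1}_S,\mu_{A_1}\circ\mu_{A_2}\rangle\geq 1-2^{-7}$ for $S=\{\mu_A\circ\mu_A\geq(1-2^{-7})\sigma\}$, and $\langle\mu_A\circ\mu_A,\mu_{A_i}\circ\mu_{A_i}\rangle\leq 2\sigma$.

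\textbf{Steps 3--5: almost-periodicity, bootstrapping and density increment.} Apply the Bohr set version of $L^\infty$ almost-periodicity \cite[Theorem 3.2]{SS} with $k=O(1)$. It gives $X$ inside a narrower Bohr set of relative density $\exp(-O(\lo{\alpha}^4))$, with $\langle \mu_A\circ\mu_A,\mu_X^{(k)}\ast\mu_{A_1}\circ\mu_{A_2}\rangle\geq(1-2^{-5})\sigma$.

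Next, the local Chang lemma yields a Bohr set $B'$ of rank $d+O(\lo{\alpha}^4)$, and of radius only a further factor $(c/d\lo{\alpha})^{O(1)}$ smaller, on which $|1-\gamma(t)|$ is small for $t\in B'$ and $\gamma$ in the large spectrum of $X$. This lets us repeat the Fourier computation from the proof of Proposition \ref{prop-ff-it}. Small spectrum contributions are bounded by $2^{1-k}\sum|\hat F|$, and Cauchy--Schwarz together with the bounds $\leq 2\sigma$ gives at most $2^{2-k}\sigma$. Large spectrum contributions are controlled by the regularity of $B'$.

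Finally, averaging gives $\|\mu_{B'}\ast\mu_A\|_\infty\geq(1-2^{-4})\sigma$ in relative normalization, which means $A+x$ has relative density at least $\sigma'\alpha$ on $B'$ with $\sigma'\geq 1+2^{-12}$. After absorbing the $1+O(c)$ regularity losses, this is case (2).
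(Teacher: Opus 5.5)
Your outline has the same architecture as the paper: density comparison and H\"older lifting, then iterated sifting localized to nested Bohr sets, then almost-periodicity with $X$ in a narrower Bohr set, then local Chang and the Fourier bootstrap. However, the step you yourself call the main obstacle is not actually carried out. To continue the iteration you must replace $\mu_C\circ\mu_C$, where $C=A_i\subset B^1$ is an arbitrary set of relative density $\gamma$, by $\mu_C\circ\mu_{C(y)}$ with $C(y)=C\cap(B^2+y)$ and $B^2$ narrower. Both the correlation with $\mu_A\circ\mu_A$ and the density $\gamma(y)$ of $C(y)$ must stay under control.

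The lemmas you cite do not give this. Lemma \ref{narrowdensity} only yields one translate on which $C$ has roughly average density (or an increment for $C$), and says nothing about whether the correlation lives there. Lemma \ref{firstcompare} controls $L^p$ norms of $f\circ f$ against Bohr measures, not such correlations.

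What works is a weighted average. With weight $\mu_C\ast\mu_{B^2}(y)=\gamma(y)/(\gamma\mu(B^1))$ one has the exact identity $\E_y\,\mu_C\ast\mu_{B^2}(y)\langle \mu_A\circ\mu_A,\mu_{C(y)}\circ\mu_C\rangle=\langle\mu_A\circ\mu_A,\mu_C\circ\mu_C\rangle$. This average could be carried by translates where $C(y)$ is extremely sparse, so you need an upper bound on the individual correlations to discard them. The paper gets this from the dichotomy in Lemma \ref{iteratedsifting}:
\begin{itemize}
\item If $\|\mu_A\circ\mu_A\|_{p(\mu_{B^2+t})}\le 8\sigma$ for all $t$, H\"older bounds each correlation by $8\sigma\gamma(y)^{-1/p}$, with $p\approx\max(\lo{\gamma}/\lo{\alpha},\lo{\alpha})$. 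Translates with $\gamma(y)\le\alpha\gamma$ then contribute at most $2^6\sigma$.
\item Otherwise the local $L^p$ norm is large on some translate, and one re-sifts from scratch there via Lemmas \ref{firstcompare} and \ref{weightedsiftresult}.
\end{itemize}
The trivial bound $\|\mu_A\circ\mu_A\|_\infty\le\alpha^{-1}\mu(B)^{-1}$ together with $\sigma\ge\mu(B)^{-1}$ would also suffice, since translates with $\gamma(y)\le c'\alpha\gamma$ carry total weight $O(c'\alpha)$. Either way the localization costs a factor $\alpha^{O(1)}$ in density, not $1+O(c)$. That is harmless for the final $\alpha^{O(\lo{\alpha})}$, but it has to be argued.

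Second, your Step 1 defers to the relative H\"older lifting of \cite{BS1} without saying at what scale the narrower Bohr sets are taken. Yet the radius bound in case (2) is the point of the proposition. When $\sigma$ is close to $1$ the total loss must be $(c/d\lo{\alpha})^{O(1)}$, so no dilation by an $\alpha$-dependent factor is allowed. The paper reworks this step for exactly that reason, in two ways.
\begin{itemize}
\item It first applies Lemma \ref{narrowdensity}, valid at scale $c/d$ rather than the traditional $c\alpha/d$. This passes to $A'=(A+y)\cap B^1$ with density at least $(1-2^{-11})\alpha$ on both $B^1$ and $B^2$. This step, absent from your sketch, is also what converts case (1) of Lemma \ref{liftunbalance} into the count $\frac14\alpha^3N^2\mu(B^1)\mu(B^2)$.
\item In Lemma \ref{liftunbalance} the error terms are bounded using $\|\mu_A\ast\mu_{B'}\|_\infty\le 2\mu(B)^{-1}$, since otherwise there is a factor-$2$ increment. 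This makes them $O(c)\mu(B)^{-1}$ with dilations by $c/d$ only. Bounding them via $\|\mu_A\|_\infty=\alpha^{-1}\mu(B)^{-1}$ would force dilations by $\alpha/d$ and break the stated radius.
\end{itemize}
Two minor points: a relative increment by $\sigma$ reads $\|\mu_A\ast\mu_{B'}\|_\infty\ge\sigma\mu(B)^{-1}$, with no $\alpha^{-1}$. Also, the large-spectrum term is controlled by $|1-\gamma(t)|\le\nu$ from Lemma \ref{Chang}, not by regularity.
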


Before proving Proposition \ref{prop-it}, let us see how it implies Theorem \ref{3APbound}.
\begin{proof}[Proof of Theorem \ref{3APbound}, assuming Proposition \ref{prop-it}] As usual, it suffices to prove the result for $A\subset \bbz/N\bbz$ for $N$ an odd positive integer, since we may embed $A\subset \{1,\dots,N\}$ into $\bbz/(2N+1)\bbz$ without introducing any new three-term progressions.

Let $C>0$ be an absolute constant to be chosen later. Let $\alpha$ be the density of $A$ in $G$. Let $J$ be the maximal positive integer such that there exists a sequence $G=B^0\supset B^1\supset \dots \supset B^J$ of regular Bohr sets with ranks $0=d_0,\dots,d_J$ and radii $1=\rho_0,\dots,\rho_J$, translates $0=x_0,x_1,\dots,x_J$, and parameters $\sigma_1,\dots,\sigma_J\in [1+2^{-12},\alpha^{-1}]$ with the following properties.
\begin{itemize}
    \item For each $j\geq 1$, the relative density of $(A+x_j)\cap B^j$ in $B^j$ is at least $\sigma_j$ times the relative density of $(A+x_{j-1})\cap B^{j-1}$ in $B^{j-1}$.
    \item For each $j\geq 1$, $d_j\leq d_{j-1}+C\lo{\alpha}^4$.
    \item For each $j\geq 1$, $\rho_j\geq (c/2d\lo{\alpha})^{C\log(\sigma_j))}\rho_{j-1}$.
\end{itemize}

We can take $d_0=1$ and $\rho_0=1$.
By the first bullet point, we must have $\prod_{j=1}^J\sigma_j\leq \alpha^{-1}$, and so $J=O(\lo{\alpha})$. By the second bullet point, we must have $d_j\leq O(\lo{\alpha}^5)$ for all $j$. Then, by the third bullet point, $\rho_J\geq \prod_{j=1}^J(c/2\lo{\alpha})^{O(\log\sigma_j)}\geq \exp(-O(\lo{\alpha}\log(\lo{\alpha}))$.

Let $B'$, $B''$ be regular Bohr sets with $B''\subset_{c/2d}B'\subset_{c/2d}B^J$.We now apply Proposition \ref{prop-it} with $(A+x_J)\cap B^J$ in place of $A$, $B^J$ in place of $B$, $B'$ in place of $B^1$, and $B''$in place of $B^2$. If we are in case (2), then provided the constant $C$ is chosen large enough, this contradicts the maximality of $J$. In turn, we must be in case (1), so that the number of three-term progressions in $(A+x_J)\cap B^j$, and thus $A$, is at least
\[\frac{1}{4}\alpha^3N^2\mu(B')\mu(B'').\]
By Lemma \ref{bohrsizebound}, $\mu(B')\mu(B'') = \exp(-O(\lo{\alpha}^6\log(\lo{\alpha})))$. Since $A$ has no nontrivial three-term progressions, the number of three-term progressions in $A$ is $\alpha N$. We thus have the inequality
\[N\leq 4\alpha^{-2}\exp(O(\lo{\alpha}^6\log(\lo{\alpha})) = \exp(O(\lo{\alpha}^6\log(\lo{\alpha})),\]
which rearranges to the desired upper bound on $\alpha$.
\end{proof}

To prove Proposition \ref{prop-it}, we will follow the five-step plan discussed in section 2. We begin with the H\"older lifting and unbalancing step. Although our proof is similar to that in \cite{BS1}, it must be reworked slightly to provide the required control of the radius. 

We recall part of the unbalancing step fron \cite{BS1}.

\begin{lemma}[\cite{BS1}, Lemma 7]\label{unbalance} Let $\epsilon\in (0,1)$ and $\nu:G\to \mathbb{R}_{\geq 0}$ satisfy $\|\nu\|_1=1$ and $\hat{\nu}\geq 0$. Let $f:G\to\mathbb{R}$ be such that $\hat{f}\geq 0$. If $\|f\|_{p(\nu)}\geq \epsilon$ for some $p\geq 1$, then $\|f+1\|_{p'(\nu)}\geq 1+\frac{1}{2}\epsilon$ for some $p' = O_\epsilon(p)$. 
\end{lemma}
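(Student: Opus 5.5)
The plan is to reduce to the case of a large positive part of $f$, using odd moments and the Fourier positivity, and then to compare $f+1$ with $1+f_+$ on a superlevel set of $f$.

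\textbf{Step 1: odd moments are nonnegative.} For any integer $k\geq 1$,
\[\langle f^k,\nu\rangle=\sum_{\gamma_1+\dots+\gamma_k=\gamma}\hat f(\gamma_1)\cdots\hat f(\gamma_k)\overline{\hat\nu(\gamma)}\geq 0,\]
because $\hat f\geq 0$ and $\hat\nu\geq 0$. Here $\hat\nu$ is real and nonnegative, so the conjugate is harmless.

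\textbf{Step 2: the positive part is large.} Since $L^p(\nu)$ norms are monotone in the exponent, we may replace $p$ by an odd integer $k\in[p,p+2]$ and still have $\|f\|_{k(\nu)}\geq\epsilon$. Write $f_+=\max(f,0)$. Using $|x|^k+x^k=2x_+^k$ for odd $k$ together with Step 1, we get
\[\E f_+^k\nu=\tfrac12\bigl(\|f\|_{k(\nu)}^k+\langle f^k,\nu\rangle\bigr)\geq \tfrac12\epsilon^k.\]
The same bound holds with $k$ replaced by any larger odd integer. On $\{f\geq 0\}$ we have $|f+1|\geq f_+$, so $\|f+1\|_{k(\nu)}\geq \|f_+\|_{k(\nu)}$. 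This already finishes the proof if $\|f_+\|_{k'(\nu)}\geq 1+\epsilon$ for some $k'=O_\epsilon(p)$.

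\textbf{Step 3: pigeonhole to control higher moments (the main obstacle).} To pass from a large $k$-th moment of $f_+$ to a set of non-negligible $\nu$-measure where $f$ is bounded below, we need $\|f_+\|_{2k(\nu)}$ to be comparable to $\|f_+\|_{k(\nu)}$. We run through the odd exponents $k_j\approx 2^jk$.

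\begin{itemize}
\item Suppose $\|f_+\|_{2k_j(\nu)}\geq 2\|f_+\|_{k_j(\nu)}$ for every $j\leq J$. Then the norm doubles at each step, starting from $\|f_+\|_{k(\nu)}\geq 2^{-1/k}\epsilon\geq \epsilon/2$. With $J=O(\log(1/\epsilon))$ we reach $\|f_+\|_{k_J(\nu)}\geq 2$ at an exponent $k_J=O_\epsilon(p)$, and Step 2 finishes.
\item Otherwise, fix the first $j$ at which doubling fails, and set $m=k_j$. Then $\|f_+\|_{m(\nu)}\geq \epsilon/2$ and $\|f_+\|_{2m(\nu)}\leq 2\|f_+\|_{m(\nu)}$.
\end{itemize}

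I expect this step to need the most care. The danger is that $f$ is unbounded, so a priori all of the moment could sit on a set of tiny $\nu$-measure. The doubling dichotomy is what I would try first to rule this out, possibly after adjusting the ratio $2$ to something depending on $\epsilon$.

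\textbf{Step 4: lower bound on a superlevel set.} Let $M=\|f_+\|_{m(\nu)}$, which satisfies $M\geq \epsilon/2$, and let $T=\{f\geq (1-\eta)M\}$ for a small absolute $\eta$.
\begin{itemize}
\item The contribution to $\E f_+^m\nu$ from outside $T$ is at most $(1-\eta)^mM^m$. So $\E f_+^m1_T\nu\geq (1-(1-\eta)^m)M^m\geq \tfrac12 M^m$, once $m$ exceeds a constant (we may take $p\gg 1$ without loss).
\item By Cauchy--Schwarz, $\E f_+^m 1_T\nu\leq \nu(T)^{1/2}\|f_+\|_{2m(\nu)}^m\leq \nu(T)^{1/2}2^mM^m$. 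Hence $\nu(T)\geq 4^{-m-1}$.
\item On $T$ we have $f+1\geq 1+(1-\eta)\epsilon/2$. Taking $p'=Cm$ with $C=C(\epsilon)$ large, we get
\[\|f+1\|_{p'(\nu)}\geq \bigl(1+(1-\eta)\tfrac{\epsilon}{2}\bigr)\nu(T)^{1/p'}\geq \bigl(1+(1-\eta)\tfrac{\epsilon}{2}\bigr)4^{-2/C}.\]
\end{itemize}

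This last bound falls slightly short of $1+\epsilon/2$. The cleanest fix is to run the argument with the level $\epsilon$ in place of $\epsilon/2$, using the lossless bound $\|f_+\|_{m(\nu)}\geq 2^{-1/m}\epsilon$ instead of the crude $\epsilon/2$, and to choose $\eta$ small and $C$ large depending on $\epsilon$. All exponents involved are then $O_\epsilon(p)$.
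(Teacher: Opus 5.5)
Your proof is correct, including the repair in your last paragraph. For instance, $\eta=1/4$, $m\ge 10$ and $C\asymp\epsilon^{-1}$ give $\bigl(1+(1-\eta)2^{-1/m}\epsilon\bigr)4^{-(m+1)/(Cm)}\ge 1+\frac12\epsilon$.

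The paper does not prove this lemma; it quotes it from \cite{BS1}. That standard proof has the same skeleton as yours:
\begin{itemize}
\item make $p$ odd;
\item use $\langle f^p,\nu\rangle\ge 0$ to get $2\langle f^p1_{\{f\ge 0\}},\nu\rangle\ge\epsilon^p$;
\item pass to a superlevel set $P=\{f\ge\frac34\epsilon\}$;
\item apply Cauchy--Schwarz against $\langle f^{2p},\nu\rangle$;
\item finish with $\|f+1\|_{p'(\nu)}\ge(1+\frac34\epsilon)\nu(P)^{1/p'}$.
\end{itemize}

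The only real difference is your Step 3. There, a single dichotomy replaces your doubling iteration. Either $\|f+1\|_{2p(\nu)}\ge 2$ and one is done, or Minkowski gives $\|f\|_{2p(\nu)}\le\|f+1\|_{2p(\nu)}+1\le 3$, whence $\nu(P)\ge\frac1{16}(\epsilon/3)^{2p}$.

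So the danger you flag, that the moment sits on a set of tiny $\nu$-measure, is not really an obstacle. A superlevel set of measure $\epsilon^{O(p)}$ is harmless, because $\nu(P)^{1/p'}$ is already close to $1$ once $p'\gg\epsilon^{-1}\log(e/\epsilon)p$. You could run the same one-shot split using your own observation $|f+1|\ge f_+$: either $\|f_+\|_{2k(\nu)}\ge 2$, or Cauchy--Schwarz gives $\nu(T)\ge\frac14(\epsilon/4)^{2k}$.

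Your doubling iteration buys the $\epsilon$-free bound $\nu(T)\ge 4^{-m-1}$. The cost is that $m$ may grow to about $p/\epsilon$, so you end with $p'\asymp\epsilon^{-2}p$ rather than the $p'\ll\epsilon^{-1}\log(e/\epsilon)p$ of the one-step argument. Both are $O_\epsilon(p)$, as the statement requires, and in the paper's application $\epsilon$ is an absolute constant, so nothing is lost.
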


\begin{lemma}[H\"older lifting and unbalancing]\label{liftunbalance} Let $B$ be a regular Bohr set with rank $d$. Let $B'\subset B_{c/d}$ be a regular Bohr set. Suppose $A\subset B$ has relative density $\alpha$ and $C\subset B'$ has relative density $\gamma$. Then at least one of the following is true.
\begin{enumerate}
    \item Many progressions: $\langle \mu_A\ast\mu_A,\mu_C\rangle \geq 2^{-1}\mu(B)^{-1}$.
    \item Density increment: $\|\mu_A\ast \mu_{B'}\|_\infty \geq 2\mu(B)^{-1}$.
    \item $L^p$ discrepancy: There are Bohr sets $B''$, $B'''$ with $B'''\subset_{c/d}B''\subset_{c/d}B'$ such that, if $\nu = \mu_{B''}\circ\mu_{B''}\ast\mu_{B'''}\circ\mu_{B'''}$, there is a $p = O(\lo{\gamma})$ such that $\|\mu_A\circ\mu_A\|_{p(\nu)}\geq (1+2^{-5})\mu(B)^{-1}$.
\end{enumerate}
\end{lemma}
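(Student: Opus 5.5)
We follow the Hölder lifting and unbalancing argument of \cite{BS1}, carried out relative to the Bohr sets so that the radii stay under control. Assume cases (1) and (2) both fail. Write $f=\mu_A\ast\mu_A$ and let $\beta=\mu(B)$. Since $A\subset B$ and $\mu_{B}\ast\mu_{B'}$ is close to $\mu_B$ by regularity, the natural "expected value'' of $f$ on the relevant region is about $\beta^{-1}$.

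\textbf{Step 1 (from few progressions to a discrepancy).} Consider the balanced function $g=\mu_A-\mu_B$, normalised so that the target becomes $\langle \mu_A\ast\mu_A,\mu_C\rangle\approx \beta^{-1}$. Expand $\langle\mu_A\ast\mu_A,\mu_C\rangle$ by replacing $\mu_A$ with $\mu_B+g$ one factor at a time.
\begin{itemize}
\item The term $\langle\mu_B\ast\mu_B,\mu_C\rangle$ is $\beta^{-1}(1+O(c))$. This uses that $C\subset B'\subset B_{c/d}$ and the regularity of $B$ (Lemma \ref{narrowbohrsupport}/\ref{firstcompare}).
\item Cross terms of the form $\langle g\ast\mu_B,\mu_C\rangle$ are controlled by $\|\mu_A\ast\mu_{B'}\|_\infty$ together with regularity. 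Because case (2) fails, they are at most about $\beta^{-1}$ in size. I would instead arrange the expansion so that they contribute with a favourable sign or only $O(c)\beta^{-1}$, by smoothing with $B'$ where necessary.
\end{itemize}
Since case (1) fails, we conclude that $|\langle g\ast g,\mu_C\rangle|\gtrsim \beta^{-1}$, or more conveniently $|\langle \mu_A\ast\mu_A - \beta^{-1},\mu_C\rangle|\ge 2^{-2}\beta^{-1}$ up to small errors.

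\textbf{Step 2 (Hölder lifting).} Insert the narrower Bohr sets $B''\subset_{c/d}B'$ and $B'''\subset_{c/d}B''$. Using regularity, replace $\mu_C$ by $\mu_C\ast\mu_{B''}\ast\ldots$, then apply Hölder's inequality in the Kelley--Meka form: $|\langle h,\mu_C\rangle|\le \|h\|_{p(\nu')}\|\mu_C/\nu'\|_{p'(\nu')}$. Taking $p\approx\lo{\gamma}$ makes the dual norm $O(1)$, since $\mu_C\le \gamma^{-1}\mu_{B'}$. This yields $\|\mu_A\circ\mu_A-\beta^{-1}\|_{p(\nu)}\gtrsim\beta^{-1}$ with $\nu=\mu_{B''}\circ\mu_{B''}\ast\mu_{B'''}\circ\mu_{B'''}$. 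The transfer from $\ast$ to $\circ$ goes through the usual symmetrisation: rewrite the inner product as $\langle \mu_A\circ\mu_A,\cdot\rangle$ against a positive-definite measure, and use Cauchy--Schwarz/Hölder in Fourier space. The choice of $\nu$ as a convolution of two difference-convolutions makes $\hat\nu\ge0$.

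\textbf{Step 3 (unbalancing).} Set $F=\beta\,\mu_A\circ\mu_A-\beta(\mu_B\circ\mu_B)$, or a suitable smoothed version, so that $\hat F\ge0$; this can be arranged because the Fourier coefficients of $\mu_A\circ\mu_A$ are $|\hat\mu_A|^2$. Then apply Lemma \ref{unbalance} with $\epsilon$ an absolute constant. This gives $\|F+1\|_{p'(\nu)}\ge 1+\epsilon/2$ with $p'=O(\lo{\gamma})$. Finally replace $F+1$ by $\beta\mu_A\circ\mu_A$, using that $\beta\mu_B\circ\mu_B=1+O(c)$ on the support of $\nu$ (regularity again, since $\nu$ is supported in $B_{O(c/d)}$). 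This gives the claimed $\|\mu_A\circ\mu_A\|_{p(\nu)}\ge(1+2^{-5})\beta^{-1}$.

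\textbf{Main obstacle.} The hardest part is Steps 1--2: arranging the balanced-function expansion so that all regularity error terms are $O(c)\beta^{-1}$, that the failure of case (2) is exactly what controls the terms not involving $g\ast g$, and that positivity $\hat\nu\ge0$ together with $\hat F\ge0$ survives the localisation to Bohr sets.
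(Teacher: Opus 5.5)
\textbf{There is a genuine gap.} The functions you Hölder-lift and unbalance lack the positivity that both steps need.

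\textbf{Step 3.} Lemma \ref{unbalance} requires $\hat f\ge 0$. Your $F=\beta(\mu_A\circ\mu_A-\mu_B\circ\mu_B)$ has $\hat F=\beta(|\hat{\mu_A}|^2-|\hat{\mu_B}|^2)$, and this has no sign when $B$ is a Bohr set. The sign-definite object is $(\mu_A-\lambda)\circ(\mu_A-\lambda)$, whose transform is $|\hat{\mu_A}-\hat{\lambda}|^2$. It equals $\mu_A\circ\mu_A-\lambda\circ\lambda$ only when $\mu_A\circ\lambda=\lambda\circ\lambda$. That holds for a subgroup with $\lambda=\mu_B$ (the finite-field case), but not here. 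For instance, take $B=\{|x|\le L\}\subset\bbz/N\bbz$ and $A$ a subinterval of relative density $2/3$ on which some character $\gamma$ completes exactly one period. Then $\hat{\mu_A}(\gamma)\approx 0$ while $|\hat{\mu_B}(\gamma)|\approx 2/(3\pi)$.

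\textbf{Step 2.} This breaks for the same reason. Lemma \ref{firstcompare} transfers $\|f\ast f\|_{p(\mu_{B'})}$ to $\|f\circ f\|_{p(\nu)}$ by expanding the $p$-th moment in Fourier space. It then bounds the $p$-fold convolution of $\hat f^2$ by that of $|\hat f|^2$, which requires a genuine self-convolution. For $\mu_A\ast\mu_A-\beta^{-1}$ the constant enters with a minus sign, so the triangle inequality only bounds you by the useless quantity $\|\mu_A\circ\mu_A+\beta^{-1}\|_{p(\nu)}$. In any case, a two-sided bound $\|\mu_A\circ\mu_A-\beta^{-1}\|_{p(\nu)}\gtrsim\beta^{-1}$ is not the input unbalancing takes.

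\textbf{The missing idea.} You gesture at it in Step 1, but the $B'$-smoothing has to be built into the balancing function, and its square carried through every step.
\begin{itemize}
\item Put $B^0=B_{1+1/200d}$, $\lambda=\mu_{B^0}\ast\mu_{B'}$ and $h=\mu_A-\lambda$. Then $\mu_A\ast\mu_A=h\ast h+2\mu_A\ast\lambda-\lambda\ast\lambda$.
\item Since $A+B'\subset B^0$, you have $\langle\mu_A\ast\mu_{B'},\mu_{B^0}\rangle=\mu(B^0)^{-1}$. The failure of (2) then gives $|\langle\mu_A\ast\lambda,\mu_C\rangle-\mu(B^0)^{-1}|\le\|\mu_A\ast\mu_{B'}\|_\infty\|\mu_{B^0}\ast\mu_C-\mu_{B^0}\|_1=O(c)\beta^{-1}$.
\item This is only an $O(c)$ loss. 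The naive regularity estimate for your unsmoothed $\mu_A-\mu_B$ loses $O(c/\alpha)$. That would force $B'\subset B_{c\alpha/d}$ and destroy the radius control the lemma exists to provide.
\item Hence $|\langle h\ast h,\mu_C\rangle|\ge\frac38\beta^{-1}$.
\item Hölder against $\mu_{B'}$ with $\gamma^{-1/p_0}\le\frac32$ gives $\|h\ast h\|_{p_0(\mu_{B'})}\ge\frac14\beta^{-1}$.
\item Lemma \ref{firstcompare} then gives $\|h\circ h\|_{p_1(\nu)}\ge\frac18\beta^{-1}$.
\item Now $\beta\,h\circ h$ has transform $\beta|\hat h|^2\ge0$, so Lemma \ref{unbalance} applies.
\item To pass from $\|h\circ h+\beta^{-1}\|_{p(\nu)}$ to $\|\mu_A\circ\mu_A\|_{p(\nu)}$, you must control the cross terms $\mu_A\ast\lambda(\pm x)-\mu(B^0)^{-1}$ pointwise on $\supp\nu$. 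This uses the failure of (2) a second time, via the bound $\|\mu_A\ast\mu_{B'}\|_\infty\|\mu_{B^0}\ast\delta_x-\mu_{B^0}\|_1$.
\end{itemize}
Your Step 3 treats only the $\mu_B\circ\mu_B$ term and omits these cross terms.
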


\begin{proof} Suppose cases (1) and (2) both do not hold. Then we have 
\[|\langle \mu_A\ast\mu_A,\mu_C\rangle - \mu(B)^{-1}|\geq 2^{-1}\mu(B)^{-1}.\]
Let $B^0 = B_{1+1/200d}$. Consider the decomposition
\[\mu_{A}\ast \mu_{A} = (\mu_{A}-\mu_{B^0}\ast \mu_{B'}) \ast (\mu_{A}-\mu_{B^0}\ast \mu_{B'}) + 2\mu_{A}\ast \mu_{B^0}\ast \mu_{B'} - \mu_{B^0}\ast \mu_{B'}\ast \mu_{B^0}\ast\mu_{B'}.\]
We estimate 
\[|\langle \mu_A\ast\mu_{B^0}\ast \mu_{B'},\mu_C\rangle - \mu(B)^{-1}| \leq |\langle \mu_A\ast\mu_{B^0}\ast \mu_{B'},\mu_C\rangle - \mu(B^0)^{-1}| + |\mu(B^0)^{-1}-\mu(B)^{-1}| = \]
\[|\langle \mu_A\ast\mu_{B^0}\ast\mu_{B'},\mu_C\rangle -\langle \mu_A\ast \mu_{B'},\mu_{B^0}\rangle| +O(c)\mu(B)^{-1}\leq \|\mu_A\ast\mu_{B'}\|_\infty \|\mu_{B^0}\ast\mu_C-\mu_{B^0}\|_1+O(c)\mu(B)^{-1}.\]
Since (2) does not hold, the first factor is at most $2\mu(B)^{-1}$. By Lemma \ref{narrowbohrsupport}, the second factor is $O(c)$. Provided $c$ is chosen small enough, we can ensure that the left hand side is bounded above by $2^{-5}\mu(B)^{-1}$. 

We may similarly estimate 
\begin{align*}
    |\langle \mu_{B^0}\ast \mu_{B'}\ast \mu_{B^0}\ast\mu_{B'},\mu_C\rangle - \mu(B)^{-1}| &\leq  |\langle \mu_{B^0}\ast \mu_{B'}\ast \mu_{B^0}\ast\mu_{B'},\mu_C\rangle  - \mu(B^0)^{-1}| + O(c)\mu(B)^{-1}
    \\&=|\langle \mu_{B^0}\ast \mu_{B'}\ast \mu_{B^0}\ast\mu_{B'},\mu_C\rangle  - \langle \mu_{B^0},\mu_{B^0}\rangle   | + O(c)\mu(B)^{-1}
    \\&\leq \|\mu_{B^0}\|_\infty \|\mu_{B^0}\ast\mu_{B'}\ast\mu_{B'}\ast\mu_C-\mu_{B^0}\|_1+O(c)\mu(B)^{-1}
    \\&\leq O(c)\mu(B^0)^{-1}+O(c)\mu(B)^{-1} = O(c)\mu(B)^{-1}.
\end{align*}
Provided $c$ is chosen small enough, we can ensure that the left hand side is bounded above by $2^{-4}\mu(B)^{-1}$.

Combining these estimates, we can conclude
\[|\langle (\mu_{A}-\mu_{B^0}\ast \mu_{B'}) \ast (\mu_{A}-\mu_{B^0}\ast \mu_{B'}),\mu_C\rangle|\geq 3\cdot 2^{-3}\mu(B)^{-1}. \]
By H\"older's inequality, for any $p_0\geq 1$, the left-hand side is bounded above by 
\[\gamma^{-1/p_0}\|(\mu_{A}-\mu_{B^0}\ast \mu_{B'}) \ast (\mu_{A}-\mu_{B^0}\ast \mu_{B'})\|_{p_0(\mu_{B'})}\geq 3\cdot 2^{-3}\mu(B)^{-1}.\]
We choose $p_0 = O(\lo{\gamma})$ such that $\gamma^{-1/p_0} \leq 3/2$, so \[\|(\mu_{A}-\mu_{B^0}\ast \mu_{B'}) \ast (\mu_{A}-\mu_{B^0}\ast \mu_{B'})\|_{p_0(\mu_{B'})} \geq 2^{-2}\mu(B)^{-1}.\]

We may now apply Lemma \ref{firstcompare}. Let $p_1$ be an even integer with $p_0\leq p_1< p_0+2$. We have 
\[\|(\mu_{A}-\mu_{B^0}\ast \mu_{B'}) \circ (\mu_{A}-\mu_{B^0}\ast \mu_{B'})\|_{p_1(\nu)}\geq 2^{-3}\mu(B)^{-1}.\]
Finally, we apply Lemma \ref{unbalance} to $f = \mu(B)(\mu_{A}-\mu_{B^0}\ast \mu_{B'}) \ast (\mu_{A}-\mu_{B^0}\ast \mu_{B'})$. We find an exponent $p\leq O(\lo{\gamma})$ such that 
\[\|(\mu_{A}-\mu_{B^0}\ast \mu_{B'}) \circ (\mu_{A}-\mu_{B^0}\ast \mu_{B'})+\mu(B)^{-1}\|_{p(\nu)}\geq \left(1+2^{-4}\right)\mu(B)^{-1}.\]
First, we observe that, provided $c$ is chosen small enough, $|\mu(B^0)^{-1}-\mu(B)^{-1}| \leq 2^{-7}\mu(B)^{-1}$. 

Moreover, we expand, using the symmetry of each Bohr set,
\[(\mu_{A}-\mu_{B^0}\ast \mu_{B'}) \circ (\mu_{A}-\mu_{B^0}\ast \mu_{B'}) + \mu(B^0)^{-1} \]\[\mu_A\circ\mu_A - 2(\mu_A\ast\mu_{B^0}\ast\mu_{B'}  - \mu(B^0)^{-1}) + (\mu_{B^0}\ast\mu_{B'}\ast\mu_{B^0}\ast\mu_{B'} - \mu(B^0)^{-1}).\]

We will now estimate $\|\mu_A\ast\mu_{B^0}\ast\mu_{B'}  - \mu(B^0)^{-1}\|_{p(\nu)}$. For any $x\in\supp(\nu)$, let $\delta_x$ denote the point mass at $x$. Since $A+B'\subset B^0$, we have the following identity:
\[|\mu_A\ast\mu_{B^0}\ast\mu_{B'}(x) - \mu(B^0)^{-1}| = |\langle \mu_A\ast\mu_{B'},\mu_{B^0}\ast\delta_x\rangle - \langle \mu_A\ast\mu_{B'},\mu_{B^0}\rangle| \leq \|\mu_A\ast\mu_{B'}\|_\infty\|\mu_{B^0}\ast\delta_x-\mu_{B^0}\|_1. \]
The first factor is at most $2\mu(B)^{-1}$, and the second factor is $O(c)$ by Lemma \ref{narrowbohrsupport}. Provided $c$ is chosen small enough, this quantity is at most $2^{-7}\mu(B)^{-1}.$
We thus have 
\[\|\mu_A\ast\mu_{B^0}\ast\mu_{B'}  - \mu(B^0)^{-1}\|_{p(\nu)}\leq \|\mu_A\ast\mu_{B^0}\ast\mu_{B'}  - \mu(B^0)^{-1}\|_{\infty (\nu)}\leq 2^{-7}\mu(B)^{-1}.\]

Finally, we estimate 
\[\|\mu_{B^0}\ast\mu_{B'}\ast\mu_{B^0}\ast\mu_{B'} - \mu(B^0)^{-1}\|_{p(\nu)}.\]

We have, for any $x\in\supp(\nu)$, 
\[|\mu_{B^0}\ast\mu_{B^0}\ast\mu_{B'}\ast\mu_{B'}(x) - \mu(B^0)^{-1}| = |\langle \mu_{B^0}\ast\mu_{B'}\ast\mu_{B'}\ast\delta_x,\mu_{B^0}\rangle - \langle \mu_{B^0},\mu_{B^0}\rangle |\]
\[\leq \|\mu_{B^0}\ast\mu_{B'}\ast\mu_{B'}\ast\delta_x-\mu_{B^0}\|_1\|\mu_{B^0}\|_\infty \leq O(c)\mu(B^0)^{-1} \leq 2^{-7}\mu(B)^{-1},\]
provided $c$ is small enough. Thus,
\[\|\mu_{B^0}\ast\mu_{B^0}\ast\mu_{B'}\ast\mu_{B'}- \mu(B^0)^{-1}\|_{p(\nu)}\leq \|\mu_{B^0}\ast\mu_{B^0}\ast\mu_{B'}\ast\mu_{B'}- \mu(B^0)^{-1}\|_{\infty(\nu)} \leq 2^{-7}\mu(B)^{-1}.\]
The inequality $\|\mu_A\circ\mu_A\|_{p(\nu)}\geq \left(1+2^{-5}\right)\mu(B)^{-1}$
thus follows from the triangle inequality.
\end{proof}

Now that we have completed the H\"older lifting and unbalancing steps, the next step is sifting. Like in the finite field case, we perform an iterated sifting argument. The following is analogous to Lemma \ref{ff-iteratedsifting}.

\begin{lemma}\label{iteratedsiftresult} Let $B$ be a regular Bohr set. Let $A\subset B$ have relative density $\alpha$. Let $B^1$, $B^2$ be a pair of Bohr sets with rank $d$ and $B^2\subset_{c/d}B^1$.
\[\|\mu_A\circ\mu_A\|_{p(\mu_{B^1}\circ\mu_{B^1}\ast\mu_{B^2}\circ\mu_{B^2})}\geq (1+2^{-5})\mu(B)^{-1}.\]

Then, there is a parameter $\sigma\in [1+2^{-7},\alpha^{-1}]$, there are regular Bohr sets $B'$, $B''$, there is a translate $t$, and there are subsets $A_1\subset B'$, $A_2\subset B''+t$ such that all the following are true.
\begin{enumerate}
    \item If $S = \{x:\mu_A\circ\mu_A\geq (1-2^{-10})\sigma\mu(B)^{-1}\}$, then $\langle \mu_{A_1}\circ\mu_{A_2},\mathds{1}_S\rangle \geq 1-2^{-12}$.
    \item The relative densities of $A_1$ in $B'$ and $A_2$ in $B''+t$ are each at least $\alpha^{p+O(\lo{\alpha})}$. 
    \item $B' = B^1_\delta$, where $\delta = (c/2d)^{O(\log(\sigma))}$ and $B''\subset_{c/d}B'$.
    \item For each $i\in\{1,2\}$, $\langle \mu_A\circ\mu_A,\mu_{A_i}\circ\mu_{A_i}\rangle \leq 2^8\sigma\mu(B)^{-1}$.
\end{enumerate}
\end{lemma}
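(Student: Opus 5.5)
The plan is to run the finite-field argument of Lemma \ref{ff-iteratedsifting} again, this time relative to Bohr sets. Sifting is done against measures of the form $\mu_{C_1}\circ\mu_{C_2}$, where $C_1$ lies in a wider Bohr set and $C_2$ lies in a translate of a narrower one. The self-correlation measures $\mu_{A^j}\circ\mu_{A^j}$ used in the finite field case are not allowed here.

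\textbf{First step.} Apply Lemma \ref{sifting} (via the weighted variant, Lemma \ref{weightedsift}, to keep sharper density control) to the measure $\nu=\mu_{B^1}\circ\mu_{B^1}\ast\mu_{B^2}\circ\mu_{B^2}$. Rewrite $\nu$ as an average of measures $\mu_{B^1+u}\circ\mu_{B^2+v}$, so that an averaging argument (pigeonholing over translates) produces $C_1\subset B^1$ and $C_2\subset B^2+t$. Sifting with $\epsilon,\delta$ small powers of $2$ then gives sets $A_1^1, A_2^1$ of relative density $\alpha^{2p+O(1)}$ and threshold $\sigma_1\mu(B)^{-1}\approx\|\mu_A\circ\mu_A\|_{p(\nu)}$ with $\sigma_1\ge 1+2^{-6}$.

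\textbf{Stopping condition.} If both $\langle\mu_A\circ\mu_A,\mu_{A_i^1}\circ\mu_{A_i^1}\rangle\le 2^8\sigma_1\mu(B)^{-1}$, stop.

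\textbf{Iteration.} Otherwise, for some $i$ we have $\|\mu_A\circ\mu_A\|_{1(\mu_{A^1_i}\circ\mu_{A^1_i})}\ge 2^8\sigma_1\mu(B)^{-1}$. We want to sift again, but relative to a pair consisting of a subset of a narrower Bohr set and a subset of a translate of an even narrower one. The probabilistic device is as follows:
\begin{enumerate}
\item Write $A_i^1$ as lying in a Bohr set $\tilde B$.
\item Pick $\tilde B^1\subset_{c/d}\tilde B$ and $\tilde B^2\subset_{c/d}\tilde B^1$.
\item Use Lemma \ref{narrowbohrsupport} to get $\mu_{A_i^1}\approx\mu_{A_i^1}\ast\mu_{\tilde B^1}\ast\mu_{\tilde B^2}$ up to $O(c)$ in $L^1$.
\item Conclude that $\mu_{A_i^1}\circ\mu_{A_i^1}$ is approximately an average over random $x,y$ of $\mu_{A_i^1\cap(\tilde B^1+x)}\circ\mu_{A_i^1\cap(\tilde B^2+y)}$, weighted by the relative densities.
\end{enumerate}
Pigeonholing, by choosing $x,y$ at random with probability proportional to $|A_i^1\cap(\tilde B^1+x)|$ and $|A_i^1\cap(\tilde B^2+y)|$, gives localized pieces $C_1, C_2$ with $\langle\mu_A\circ\mu_A,\mu_{C_1}\circ\mu_{C_2}\rangle\ge 2^7\sigma_1\mu(B)^{-1}$. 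The losses are controlled because $\|\mu_A\circ\mu_A\|_\infty\le\alpha^{-1}\mu(B)^{-1}$, and pieces of tiny relative density contribute little.

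We then apply Lemma \ref{sifting} with $p=1$ to $\mu_{C_1}\circ\mu_{C_2}$. This yields new sets at threshold $\sigma_2\ge 2^6\sigma_1$, with density loss only $\alpha^{O(1)}$ times the relative density of $C_j$. Each round dilates the Bohr radius by $(c/2d)^{O(1)}$ and multiplies $\sigma$ by at least $2^6$. Since $\sigma\le\alpha^{-1}$, there are $J=O(\log\sigma)=O(\lo{\alpha})$ rounds. This gives $\delta=(c/2d)^{O(\log\sigma)}$, as in conclusion (3), and density $\alpha^{p+O(\lo{\alpha})}$, as in (2). Here the weighted sifting is what keeps the first-round exponent at $p+O(1)$ rather than $2p$. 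Maximality of the sequence gives conclusion (4), exactly as in Lemma \ref{ff-iteratedsifting}.

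\textbf{Main obstacle.} The hard step is the localization in the iteration. We have to pass from the self-correlation $\mu_{A_i}\circ\mu_{A_i}$ to a correlation of two pieces living in nested, distinctly narrower Bohr sets. This must be done without losing more than a constant factor in the correlation and without losing more than $\alpha^{O(1)}$ in relative density, measured relative to the new Bohr sets rather than globally. I would first try a second-moment or averaging argument. Its weights would be the relative densities of $A_i$ in the translates, and I would discard translates where the relative density falls below $\alpha^{C}$ times the average. Regularity via Lemma \ref{regularapproximation} controls the error, and the choice $\subset_{c/d}$ keeps the narrow sets regular.
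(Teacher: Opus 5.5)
Your proposal is correct. Its architecture matches the paper's:
\begin{itemize}
\item a weighted first sift against $\mu_{B^1}\circ\mu_{B^2+t_1}$ (Lemma \ref{weightedsiftresult}, giving relative density $\frac14\alpha^{p}$);
\item then rounds of checking (4), localizing the offending self-correlation $\mu_{A_i}\circ\mu_{A_i}$ to a wide/narrow pair of dilates by density-weighted random translates, and re-sifting at bounded exponent;
\item $\sigma$ grows geometrically, so there are $O(\log\sigma)$ rounds. The paper phrases this as a maximal chain $Z_1,\dots,Z_{2J}$ plus a contradiction, which is the same thing.
\end{itemize}

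The genuine difference is the localization step, which the paper isolates as Lemma \ref{iteratedsifting}. There, the translates with $\gamma(y)\le\alpha\gamma$ are controlled by H\"older. This needs the hypothesis $\|\mu_A\circ\mu_A\|_{p(\mu_{B^2+t})}\le 8\sigma$ for every $t$, with $p\approx\max(\lo{\gamma}/\lo{\alpha},\lo{\alpha}/\log 2)$. When that hypothesis fails, the paper instead sifts afresh at exponent $p$ on a translate of a narrower Bohr set via Lemma \ref{firstcompare}. That second case is where the density floor $\alpha^{\lo{\alpha}/\log 2+O(1)}$ comes from.

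Your route uses only $\|\mu_A\circ\mu_A\|_\infty\le\alpha^{-1}\mu(B)^{-1}$, and this does suffice:
\begin{itemize}
\item For $C\subset\tilde B$ of relative density $\gamma$ one has exactly $\mu_C=\E_x(\mu_C\ast\mu_{\tilde B^1})(x)\,\mu_{C\cap(\tilde B^1+x)}$.
\item The weight $(\mu_C\ast\mu_{\tilde B^1})(x)=\gamma(x)/(\gamma\mu(\tilde B))$ is supported on $\tilde B+\tilde B^1$, which has size at most $2|\tilde B|$.
\item So translates with $\gamma(x)<\alpha\gamma$ carry total weight at most $2\alpha$, and contribute at most $2\alpha\cdot\alpha^{-1}\mu(B)^{-1}=2\mu(B)^{-1}$ to the correlation.
\item Localizing both factors this way costs at most $4\mu(B)^{-1}\le 4\sigma_1\mu(B)^{-1}$, which is negligible against $2^8\sigma_1\mu(B)^{-1}$.
\end{itemize}
So you avoid the case split and get the cleaner density bound $\alpha\gamma$ per localization. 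The paper's extra machinery buys nothing for this lemma, since both routes end at $\alpha^{p+O(\lo{\alpha})}$ after $O(\lo{\alpha})$ rounds.

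Two corrections to your write-up.
\begin{itemize}
\item Your step 3 is false as stated. Lemma \ref{narrowbohrsupport} says a regular Bohr set is nearly invariant under convolution with a narrow measure. A sparse $A_i^1$ is far in $L^1$ from $\mu_{A_i^1}\ast\mu_{\tilde B^1}\ast\mu_{\tilde B^2}$. The step is also unnecessary, because the decomposition in your step 4 is the exact identity above.
\item The discard threshold should be $\alpha^{C}\gamma$, with $\gamma$ the global relative density of $A_i^1$. It should not be $\alpha^{C}$ times the density-weighted average: that average can greatly exceed $\gamma$, and then the bound on the discarded weight fails.
\end{itemize}
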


In order to prove Lemma \ref{iteratedsiftresult}, we introduce a weighted variant of sifting. Note that in the following lemma, there is no Bohr set, and $\alpha$ is the absolute density of $A$ in $G$ instead of a relative density in a Bohr set. Thus, when applying Lemma \ref{weightedsift} to a subset $A\subset B$ with relative density $\alpha$, the absolute density $\alpha\cdot\mu(B)$ will appear in the resulting inequalities.
\begin{lemma}\label{weightedsift}
	 	Let $p\geq 1$ be an integer. Let $C_1,C_2\subset G$ have densities $\gamma_1$ and $\gamma_2$, respectively.  Let $\mu=\mu_{C_1}\circ\mu_{C_2}$.  Let $A\subset G$ have density $\alpha$. Let $f:G\to \mathbb{R}$ be a function. 
		
	 	Given $s = (s_1,\dots,s_p)\in G^p$, let $A_1(s) = C_1\cap (A+s_1)\cap\dots\cap (A+s_p)$ and define $A_2(s)$ similarly. Let $\alpha_1(s)$ and $\alpha_2(s)$ denote their densities in $G$.

	 	Then 
	 	\[\langle (\mu_A\circ\mu_A)^p,f\rangle_\mu = |G|^{-p}\alpha^{-2p}\gamma_1^{-1}\gamma_2^{-1}\sum_{s\in G^p} \alpha_1(s)\alpha_2(s)\langle \mu_{A_1(s)}\circ\mu_{A_2(s)},f\rangle.\]

        In particular, 
        \[\mathbb{P}(s)=\frac{\alpha_1(s)\alpha_2(s)}{|G|^p\alpha^{2p}\gamma_1\gamma_2\|\mu_A\circ\mu_A\|_{p(\mu)}^p}\]
        is a probability measure on $G^p$.
	 \end{lemma}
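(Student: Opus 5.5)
This identity comes from expanding everything into indicator functions and swapping the order of summation; no estimates are involved. The key observation is a pointwise formula for $(\mu_A\circ\mu_A)^p$. By definition,
\[\mu_A\circ\mu_A(x)=\alpha^{-2}\E_y \mathds{1}_A(y)\mathds{1}_A(x+y)=\alpha^{-2}|G|^{-1}\sum_{s\in G}\mathds{1}_{A+s}(0)\,\mathds{1}_{A+s}(x),\]
after reparametrizing by $s=-y$. Which point plays the role of $0$ depends on how the measure $\mu$ is paired. Writing out $\mu=\mu_{C_1}\circ\mu_{C_2}$, we have
\[\langle g,f\rangle_\mu=\gamma_1^{-1}\gamma_2^{-1}\E_{a,b}\mathds{1}_{C_1}(a)\mathds{1}_{C_2}(b)\,g(b-a)f(b-a).\]
So the first step is to note that $\mu_A\circ\mu_A(b-a)=\alpha^{-2}|G|^{-1}\sum_{s}\mathds{1}_{A+s}(a)\mathds{1}_{A+s}(b)$. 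This holds by translating the summation variable by $a$, using translation invariance of $\E_y$.

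Raising to the $p$-th power gives
\[(\mu_A\circ\mu_A)^p(b-a)=\alpha^{-2p}|G|^{-p}\sum_{s\in G^p}\prod_{i=1}^p\mathds{1}_{A+s_i}(a)\mathds{1}_{A+s_i}(b).\]
Multiplying by $\mathds{1}_{C_1}(a)\mathds{1}_{C_2}(b)$, the products collapse to $\mathds{1}_{A_1(s)}(a)\mathds{1}_{A_2(s)}(b)$, where $A_2(s)=C_2\cap\bigcap_i(A+s_i)$. Substituting into the expression for $\langle (\mu_A\circ\mu_A)^p,f\rangle_\mu$ and exchanging the finite sum over $s$ with $\E_{a,b}$, I would get
\[\alpha^{-2p}|G|^{-p}\gamma_1^{-1}\gamma_2^{-1}\sum_s\E_{a,b}\mathds{1}_{A_1(s)}(a)\mathds{1}_{A_2(s)}(b)f(b-a).\]
Next, the inner expectation equals $\alpha_1(s)\alpha_2(s)\,\mu_{A_1(s)}\circ\mu_{A_2(s)}$ paired against $f$. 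This holds because $\mathds{1}_{A_i(s)}=\alpha_i(s)\mu_{A_i(s)}$, and $\E_{a,b}g(a)h(b)f(b-a)=\langle g\circ h,f\rangle$. If $\alpha_i(s)=0$, that term is zero on both sides, so the undefined $\mu_{A_i(s)}$ is interpreted as contributing zero.

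For the probability measure claim, I would take $f\equiv 1$. Then $\langle \mu_{A_1(s)}\circ\mu_{A_2(s)},1\rangle=1$ whenever $\alpha_1(s)\alpha_2(s)\neq0$. Also, $\langle(\mu_A\circ\mu_A)^p,1\rangle_\mu=\|\mu_A\circ\mu_A\|_{p(\mu)}^p$, since $\mu_A\circ\mu_A\ge0$. The identity then says exactly that $\P(s)$ is nonnegative and sums to $1$.

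The only delicate point is the bookkeeping of the sign and orientation convention. One must check which of $a,b$ goes with $C_1$ versus $C_2$, given the definition $f\circ g(x)=\E_y f(y)g(x+y)$, and that the shift identity produces the same $s_i$ on both sides. I expect this to be the main place an error could creep in, and I would verify it directly from the definitions.
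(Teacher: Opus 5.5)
Your proposal is correct and follows essentially the same route as the paper. Both write the $\mu$-weighted inner product as a double average over $a\in C_1$, $b\in C_2$, and express $\mu_A\circ\mu_A(b-a)$ as a normalized sum over shifts $s$ of $\mathds{1}_{A+s}(a)\mathds{1}_{A+s}(b)$. Raising this to the $p$-th power collapses the indicators to $\mathds{1}_{A_1(s)}(a)\mathds{1}_{A_2(s)}(b)$; one then exchanges the sums and takes $f\equiv 1$ for the probability-measure claim. Your bookkeeping of the orientation ($b-a$, matching $\mu_{C_1}\circ\mu_{C_2}$) and of the empty-$A_i(s)$ terms is, if anything, more careful than the written proof.
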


     \begin{proof} We have
    \begin{align*}
        \langle (\mu_A\circ\mu_A)^p,f\rangle_\mu &= |G|^{-2}\gamma_1^{-1}\gamma_2^{-1}\sum_{c_1\in C_1,c_2\in C_2}(\mu_A\circ\mu_A(c_1-c_2))^pf(c_1-c_2) 
        \\&=|G|^{-2}\gamma_1^{-1}\gamma_2^{-1}|G|^{-p}\sum_{c_1\in C_1,c_2\in C_2}f(c_1-c_2)\left(\sum_{t\in G}\mu_A(c_1+t)\mu_A(c_2+t)\right)
        \\&=\gamma_1^{-1}\gamma_2^{-1}|G|^{-2-p}\alpha^{-2p}\sum_{c_1\in C_1,c_2\in C_2}f(c_1-c_2)\sum_{s\in G^p}\mathds{1}_{A_1(s)}(c_1)\mathds{1}_{A_2(s)}(c_2)
        \\&=|G|^{-p}\alpha^{-2p}\gamma_1^{-1}\gamma_2^{-1}\sum_{s\in G^p} \alpha_1(s)\alpha_2(s)\langle \mu_{A_1(s)}\circ\mu_{A_2(s)},f\rangle.
    \end{align*}

     The final claim follows by setting $f=1$.
     \end{proof}

\begin{lemma}\label{weightedsiftresult}
Let $A$ be a subset of a regular Bohr set $B$ with relative density $\alpha$. Abbreviate $\beta = \frac{|B|}{|G|}$. Let $C_1,C_2\subset G$ have densities $\gamma_1,\gamma_2$, respectively. Let $\mu = \mu_{C_1}\circ\mu_{C_2}$. Let $p\geq 1$ be an integer, let $\epsilon\in(0,1)$, and suppose $\|\mu_A\circ\mu_A\|_{p(\mu)} \geq \beta^{-1}$. Let 
\[S = \{x:\mu_A\circ\mu_A(x)\leq (1-\epsilon)\|\mu_A\circ\mu_A\|_{p(\mu)}\}.\] 
Then there are sets $A_1\subset C_1$, $A_2\subset C_2$, each with relative density at least $\frac{1}{4}\alpha^p$, such that 
\[\langle \mu_{A_1}\circ\mu_{A_2},\mathds{1}_{G\setminus S}\rangle \geq 1-4(1-\epsilon)^p.\]
\end{lemma}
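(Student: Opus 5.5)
The plan is to combine the exact identity of Lemma \ref{weightedsift} with a first-moment (Markov-type) argument over the probability measure $\mathbb{P}$ on $G^p$ defined there. Take $A_1=A_1(s)$ and $A_2=A_2(s)$ for a suitably chosen $s\in G^p$. Then it suffices to show that a $\mathbb{P}$-random $s$ simultaneously satisfies three conditions with positive probability:
\begin{itemize}
\item the sifted pair is concentrated off $S$;
\item $A_1(s)$ is not too sparse in $C_1$;
\item $A_2(s)$ is not too sparse in $C_2$.
\end{itemize}
Throughout, Lemma \ref{weightedsift} is applied with the absolute density $\alpha\beta$ of $A$ in place of its $\alpha$.

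\textbf{The event concerning $S$.} Apply Lemma \ref{weightedsift} with $f=\mathds{1}_S$ and divide by $\|\mu_A\circ\mu_A\|_{p(\mu)}^p$. This gives
\[\E_{\mathbb{P}}\langle \mu_{A_1(s)}\circ\mu_{A_2(s)},\mathds{1}_S\rangle=\frac{\langle(\mu_A\circ\mu_A)^p,\mathds{1}_S\rangle_\mu}{\|\mu_A\circ\mu_A\|_{p(\mu)}^p}\le (1-\epsilon)^p,\]
since $\mu_A\circ\mu_A\le(1-\epsilon)\|\mu_A\circ\mu_A\|_{p(\mu)}$ on $S$ and $\langle g,\mu\rangle\le 1$ for $g\le 1$. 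By Markov's inequality, the event $\langle \mu_{A_1(s)}\circ\mu_{A_2(s)},\mathds{1}_S\rangle>4(1-\epsilon)^p$ has $\mathbb{P}$-probability less than $1/4$ (or at most $1/4$, which still suffices). On its complement we get $\langle \mu_{A_1}\circ\mu_{A_2},\mathds{1}_{G\setminus S}\rangle\ge 1-4(1-\epsilon)^p$, using that $\mu_{A_1}\circ\mu_{A_2}$ has total mass $1$.

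\textbf{The density events.} This is the step where the hypothesis $\|\mu_A\circ\mu_A\|_{p(\mu)}\ge\beta^{-1}$ enters, and it is the only real computation. Let $E_1$ be the set of $s$ with $\alpha_1(s)/\gamma_1<\alpha^p/4$. Bound $\alpha_1(s)$ pointwise on $E_1$, then sum $\alpha_2(s)$ over all $s$. By double counting,
\[\sum_{s\in G^p}\alpha_2(s)=\frac{1}{|G|}\sum_{c\in C_2}\#\{s:c-s_i\in A\ \forall i\}=\gamma_2|A|^p=\gamma_2(\alpha\beta|G|)^p.\]
Therefore
\[\mathbb{P}(E_1)\le\frac{(\alpha^p\gamma_1/4)\,\gamma_2(\alpha\beta)^p|G|^p}{|G|^p(\alpha\beta)^{2p}\gamma_1\gamma_2\|\mu_A\circ\mu_A\|_{p(\mu)}^p}=\frac{1}{4\beta^p\|\mu_A\circ\mu_A\|_{p(\mu)}^p}\le\frac14.\]
The symmetric argument gives $\mathbb{P}(E_2)\le 1/4$ for the analogous event for $A_2(s)$.

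\textbf{Conclusion.} The union of the three bad events has probability less than $3/4$, or at most $3/4$ if the bounds are not strict. In either case some $s$ avoids all three. For this $s$, the sets $A_i=A_i(s)\subset C_i$ have relative density at least $\frac14\alpha^p$ and satisfy the required inner-product bound. In particular $\alpha_i(s)>0$, so the normalized measures $\mu_{A_i}$ are well defined.

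The main point needing care is the density estimate. One must apply the normalization of $\mathbb{P}$ from Lemma \ref{weightedsift} with the absolute density $\alpha\beta$, so that the factor $\beta^{-p}$ is exactly cancelled by the lower bound $\|\mu_A\circ\mu_A\|_{p(\mu)}\ge\beta^{-1}$.
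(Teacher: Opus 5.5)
Your proposal is correct and follows essentially the same route as the paper. Both use the measure $\mathbb{P}$ from Lemma \ref{weightedsift} with the absolute density $\alpha\beta$, a first-moment bound of $(1-\epsilon)^p$ for the $S$-term, the hypothesis $\|\mu_A\circ\mu_A\|_{p(\mu)}\geq\beta^{-1}$ to give each density-failure event probability at most $1/4$, and a union bound. The only cosmetic difference is that the paper bounds $\sum_s\mathbb{P}(s)\alpha_i(s)^{-1}\leq\alpha^{-p}\gamma_i^{-1}$ and then applies Markov's inequality, whereas you bound the $\mathbb{P}$-mass of $\{s:\alpha_i(s)<\alpha^p\gamma_i/4\}$ directly, which is the same computation.
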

\begin{proof} Let $\P,$ $A_1(s)$, $A_2(s)$, $\alpha_1(s),$ and $\alpha_2(s)$ be as in Lemma \ref{weightedsift}.
Let $f = \mathds{1}_S$. We compute 
\[  \sum_{s\in G^p}\P(s)\langle \mu_{A_1(s)}\circ\mu_{A_2(s)},f\rangle = \frac{\langle (\mu_A\circ\mu_A)^p,f\rangle_{\mu}}{\|\mu_A\circ\mu_A\|^p_{p(\mu)}} \leq (1-\epsilon)^p. \]
We also compute 
\begin{align*}
    \sum_{s\in G^p}\P(s)\alpha_1^{-1}(s) &\leq  \frac{1}{|G|^p\beta^{2p}\alpha^{2p}\gamma_1\gamma_2\|\mu_A\circ\mu_A\|_{p(\mu)}^p}\sum_{s\in G^p}\alpha_2(s) \\
    &=\frac{1}{\beta^p\alpha^p\gamma_1\|\mu_A\circ\mu_A\|_{p(\mu)}^p} \\
    &\leq \alpha^{-p}\gamma_1^{-1},
\end{align*}
where we used $\|\mu_A\circ\mu_A\|_{p(\mu)}\geq \beta^{-1}$ for the final inequality.
By a similar argument, 
\[    \sum_{s\in G^p}\P(s)\alpha_2^{-1}(s)\leq \alpha^{-p}\gamma_2^{-1}.\]
By Markov's inequality, there is an $s\in G^p$ such that 
\[\langle \mu_{A_1(s)}\circ\mu_{A_2(s)},f\rangle \leq 4(1-\epsilon)^p,\]
and for each $i\in\{1,2\}$, $\alpha_i(s)\geq \frac{1}{4}\alpha^p\gamma_i$. For each $i\in\{1,2\}$, the relative density of $A_i$ in $C_i$ is $\alpha_i/\gamma_i\geq \frac{1}{4}\alpha^p$. The inequality 
\[\langle \mu_{A_1}\circ\mu_{A_2},\mathds{1}_{G\setminus S}\rangle \geq 1-4(1-\epsilon)^p\]
follows from observing $\mathds{1}_{G\setminus S}=1-f$.
\end{proof}

The following lemma is required to pass from lower bounds on $\langle \mu_A\circ\mu_A,\mu_{C}\circ\mu_{C}\rangle$ to lower bounds on $\langle \mu_A\circ\mu_A,\mu_{A_1}\circ\mu_{A_2}\rangle$, where $A_2$ is a subset of a narrower Bohr set than $A_1$. This will be required to perform the almost-periodicity step later on.

\begin{lemma}\label{iteratedsifting} Let $A$ be a subset of a regular Bohr set $B$ with relative density $\alpha$. Let $B^1$ be a regular Bohr set with rank $d$. Let $C$ be a subset of $B^1$ with relative density $\gamma$. Let $\sigma\geq \mu(B)^{-1}$ be a parameter.
Suppose 
\[\langle \mu_A\circ\mu_A,\mu_C\circ\mu_C\rangle \geq 2^7\sigma.\]
Then there are regular Bohr sets $B'$, $B''$, where $B'=B^1_\rho$ for some $\rho\geq (c/2d)^2$ and $B''\subset_{c/d}B'$, a translate $t$, and subsets $A_1\subset B'$, $A_2\subset B''+t$, each with relative density at least $\min(\alpha^{O(1)}\cdot \gamma,\alpha^{\lo{\alpha}/\log(2)+O(1)}) $, such that 
\[\langle \mu_A\circ\mu_A,\mu_{A_1}\circ\mu_{A_2}\rangle \geq \sigma.\]
\end{lemma}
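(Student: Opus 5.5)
We will convert the lower bound on $\langle \mu_A\circ\mu_A,\mu_C\circ\mu_C\rangle$ into one for a pair of sets localized to nested Bohr sets by a random-translate (localization) argument, and then use the weighted sifting of Lemma \ref{weightedsiftresult} with $p=1$ to make the sets denser relative to the level set of $\mu_A\circ\mu_A$. Choose $\rho\in[(c/2d)^2/2,(c/2d)^2]$ with $B'=B^1_\rho$ regular, and $B''\subset_{c/d}B'$. The key identity is that, since $C\subset B^1$ and $B'$ is narrow, $\mu_C$ is well approximated by its average over translates of $B'$:
\[\mu_C\approx \E_{y}\,\mu_C(y)\,\cdots,\qquad \mu_C\circ\mu_C \approx \E_{y,z}\,(\text{localized pieces})\]
More concretely, by Lemma \ref{narrowbohrsupport}, $\mu_C\ast\mu_{B'}\ast\mu_{B''}$ is close to $\mu_C$ in a weak sense, and writing $\mu_C\ast\mu_{B'} (x)= \E_y \mu_C(y)\mu_{B'}(x-y)$, we get
\[\langle \mu_A\circ\mu_A,\mu_C\circ\mu_C\rangle \approx \E_{y,z}\mu_C(y)\mu_C(z)\,\frac{\alpha'_y\alpha''_z}{\gamma^2}\langle \mu_A\circ\mu_A,\mu_{C\cap(B'+y)}\circ\mu_{C\cap(B''+z)}\rangle,\]
where $\alpha'_y,\alpha''_z$ are relative densities of $C$ in $B'+y$, $B''+z$. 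Rather than tracking error terms in $L^1$, I would use the regularity of $B^1$ (via $B^1\subset B^1_{1+\rho}$ and $\mu(B^1_{1+\rho})\le 2\mu(B^1)$) so that the averaged identity holds up to a factor $2$ or $4$, losing only constants against the $2^7$.

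Next, by pigeonholing over $(y,z)$ with weights $\alpha'_y\alpha''_z$, I find translates where the localized inner product is at least $2^5\sigma$ and the local densities $\alpha'_y,\alpha''_z\gtrsim\gamma$ (Markov on the weights, as in Lemma \ref{weightedsiftresult}, discarding pairs with density below $\gamma/8$ costs at most a constant fraction of the mass since the inner product is bounded by $\|\mu_A\circ\mu_A\|_\infty\le (\alpha\mu(B))^{-1}$... here care is needed). Translating the first set to lie in $B'$ (replacing $t$ by $z-y$) gives $C_1\subset B'$, $C_2\subset B''+t$ of relative density $\gtrsim\gamma$ with $\langle\mu_A\circ\mu_A,\mu_{C_1}\circ\mu_{C_2}\rangle\ge 2^5\sigma$. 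If $\gamma$ is large this already suffices with $A_i=C_i$; the $\alpha^{O(1)}\gamma$ bound comes from that discarding step.

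The main obstacle is the discarding step: pairs with tiny local density could carry all the mass of the inner product, since $\mu_A\circ\mu_A$ can be as large as $(\alpha\mu(B))^{-1}$, about $\alpha^{-1}\sigma$. This explains the alternative bound $\alpha^{\lo{\alpha}/\log 2+O(1)}$. I would handle it by a dyadic decomposition of local densities into $O(\lo{\alpha})$ scales, say $\gamma 2^{-j}$. Pairs with density below $\alpha^{O(1)}\gamma$ contribute at most $\alpha^{O(1)}\gamma\cdot\alpha^{-1}\mu(B)^{-1}\le\sigma$ total. Pigeonholing among the remaining scales, if necessary after applying Lemma \ref{weightedsiftresult} with $p=1$ on the level set $\{\mu_A\circ\mu_A\ge\sigma\}$, costs one factor of $\alpha$ or $2$ per scale. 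This yields density at least $\min(\alpha^{O(1)}\gamma,\ 2^{-\lo{\alpha}/\log 2\cdot O(1)}\cdots)$, and I expect this bookkeeping to produce exactly the $\alpha^{\lo{\alpha}/\log 2}$ term.
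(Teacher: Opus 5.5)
Your skeleton is sound: split $C$ into pieces on translates of a narrower Bohr set, discard pieces of low local density, and pigeonhole. Once repaired it gives a shorter proof than the paper's, but as written two steps do not hold.

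\textbf{The decomposition is wrong.} With weights $\mu_C(y)\alpha'_y/\gamma$ one gets $\E_y\,\mu_C(y)\frac{\alpha'_y}{\gamma}\,\mu_{C\cap(B'+y)}(x)=\mu_C(x)\,\alpha'_x/\gamma$. This is $\mu_C$ reweighted pointwise by a factor in $(0,1/\gamma]$, so it is not comparable to $\mu_C$ up to constants. Also, Lemma \ref{narrowbohrsupport} concerns $\mu_{B_{1+\delta}}\ast\mu$ for a Bohr set and says nothing about $\mu_C\ast\mu_{B'}$ for arbitrary $C$. The right statement is an exact identity with uniform weights over an enlarged copy of $B^1$. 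Since $C\subset B^1$ and $B^1+B'\subset B^1_{1+\rho}$, we have $\mathds{1}_{B^1_{1+\rho}}\ast\mu_{B'}=1$ on $C$, hence
\[\mu_C=\E_y\, w(y)\,\mu_{C\cap(B'+y)},\qquad w(y)=\frac{\mathds{1}_{B^1_{1+\rho}}(y)\,\alpha'_y}{\gamma\,\mu(B^1)},\qquad \E_y w(y)=1,\]
and similarly with $B''$ for the second copy.

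\textbf{The discarding step is not an obstacle.} If $\alpha'_y\le\eta\gamma$ then $w(y)\le\eta\,\mathds{1}_{B^1_{1+\rho}}(y)/\mu(B^1)$. By regularity such $y$ have total weight at most $2\eta$; your bound carries a spurious factor $\gamma$. Every localized inner product is at most $\|\mu_A\circ\mu_A\|_\infty\le\alpha^{-1}\mu(B)^{-1}\le\alpha^{-1}\sigma$, which is where $\sigma\ge\mu(B)^{-1}$ enters.
\begin{itemize}
\item Take $\eta=\alpha/8$. Pairs $(y,z)$ with either local density below $\alpha\gamma/8$ then contribute at most $4\eta\alpha^{-1}\sigma=\sigma/2$.
\item The remaining weight is at most $1$ and carries at least $(2^7-\tfrac12)\sigma$. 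So some pair has both densities at least $\alpha\gamma/8$ and inner product at least $\sigma$.
\item Translating both pieces by $-y$ puts $A_1$ in $B'$ and $A_2$ in $B''+(z-y)$, and the proof is finished.
\end{itemize}

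The dyadic decomposition and the appeal to Lemma \ref{weightedsiftresult} with $p=1$ are not needed. As described they are not a well-defined step: with $p=1$ that lemma only gives $1-4(1-\epsilon)$, which is vacuous unless $\epsilon>3/4$. Nor is this where the $\alpha^{\lo{\alpha}/\log 2}$ term comes from. Separately, your choice $\rho\in[(c/2d)^2/2,(c/2d)^2]$ can violate $\rho\ge(c/2d)^2$. It is simplest to localize only one copy and take $B'=B^1$, $A_1=C$.

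\textbf{Comparison with the paper.} The paper splits by whether $\|\mu_A\circ\mu_A\|_{p(\mu_{B^2+t})}\le 8\sigma$ for all $t$, with $p\approx\max(\lo{\gamma}/\lo{\alpha},\lo{\alpha}/\log 2)$.
\begin{itemize}
\item In the first case it localizes one copy of $C$ to translates of $B^2\subset_{c/d}B^1$ exactly as above. It controls pieces with $\gamma(y)\le\alpha\gamma$ by H\"older against that $L^p$ bound.
\item In the second case it abandons $C$ and sifts $A$ afresh on narrower Bohr sets via Lemmas \ref{firstcompare} and \ref{weightedsiftresult}. This yields density $\frac14\alpha^p$, which is the source of the second term in the minimum.
\end{itemize}
Your repaired argument is the paper's first case with H\"older replaced by the trivial $L^\infty$ bound. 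Because the discarded weight is only $O(\alpha)$, that bound already suffices. So the case split is unnecessary, and one gets relative densities $\gtrsim\alpha\gamma$ unconditionally. This changes nothing downstream in Lemma \ref{iteratedsiftresult}.
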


\begin{proof} Let $B^2\subset_{c/d}B^1$, and let $B^0 = B^1_{1+1/200d}$. We abbreviate $\mu(B)=\beta$, $\mu(B^1) = \beta_1$, and $\mu(B^2)=\beta_2$. Let $p =\max(\lo{\gamma}/\lo{\alpha}, \lo{\alpha}/\log(2))+O(1)$ be such that $p$ is a positive integer and $2^{2-p}\leq 1/2$. We divide the proof into two cases:
\begin{enumerate}
    \item $\|\mu_A\circ\mu_A\|_{p(\mu_{B^2+t})}\leq 8\sigma$ for all $t\in G$.
        \item $\|\mu_A\circ\mu_A\|_{p(\mu_{B^2+t})}\geq 8\sigma$ for some $t\in G$.
\end{enumerate}
\textbf{Case (1).} 
For each $y\in B^0$, let $\gamma(y)$ be the relative density of $C\cap (B^2+y)$ in $B^2+y$. Let $C(y) = C\cap (B^2+y)$. 

For each $y\in B^0$, we have 
\begin{equation*}
\begin{split}
    \langle \mu_A\circ\mu_A,\mu_{C(y)}\circ\mu_C\rangle  &= \langle \mu_A\circ\mu_A\ast\mu_C,\mu_{C(y)}\rangle  \\ &=\beta_2 \langle \mu_A\circ\mu_A\ast\mu_C,\mu_{C(y)}\rangle_{\mu_{B^2+y}} \\ 
     &\leq \beta_2\|\mu_A\circ\mu_A\ast\mu_C\|_{p(\mu_{B^2+y})}\|\mu_{C(y)}\|_{\frac{p}{p-1}(B^2+y)} \\&= \gamma(y)^{-1/p}\|\mu_A\circ\mu_A\ast\mu_C\|_{p(\mu_{B^2+y})} \\
    &= \gamma(y)^{-1/p}\left\|\frac{1}{|C|}\sum_{c\in C}\mu_A\circ\mu_A(\cdot + c)\right\|_{p(\mu_{B^2+y})} \\&\leq \gamma(y)^{-1/p}\frac{1}{|C|}\sum_{c\in C}\|\mu_A\circ\mu_A\|_{p(\mu_{B^2+y-c})} \\ &\leq \gamma(y)^{-1/p}\cdot 8\sigma. 
\end{split}
\end{equation*}

Observe that $\mu_C\ast \mu_{B^2}$ is a probability measure with support contained in $B^0$, and that for any $y\in B^0$, $\mu_C\ast \mu_{B^2}(y) = \gamma(y)\gamma^{-1}\beta_1^{-1}$. We compute
\begin{align*}
\E_{y\in G} \gamma(y)\gamma^{-1}\beta_1^{-1}\langle \mu_A\circ\mu_A,\mu_{C(y)}\circ\mu_C\rangle &= \E_{y\in G} \gamma(y)\gamma^{-1}\beta_1^{-1}\langle \mu_A\circ\mu_A\ast\mu_C,\mu_{C(y)}\rangle  \\
&=\E_{y\in G}\mathds{1}_{B^0}(y)\gamma(y)\gamma^{-1}\beta_1^{-1}\langle \mu_A\circ\mu_A\ast\mu_C,\mu_{C(y)}\rangle \\&= 
\E_{y\in G}\mathds{1}_{B^0}(y)\langle \mu_A\circ\mu_A\ast\mu_C,\mu_C\mu_{B^2+y}\rangle  
\\
&=\E_{y\in G}\mathds{1}_{B^0}(y)\E_{x\in G}(\mu_A\circ\mu_A\ast\mu_C(x))\mu_C(x)\mu_{B^2}(x-y) \\&= \E_{x\in G}(\mu_A\circ\mu_A\ast\mu_C(x))\mu_C(x)(\mathds{1}_{B^0}\ast\mu_{B^2}(x)).
\\&\geq \E_{x\in G}(\mu_A\circ\mu_A\ast\mu_C(x))\mu_C(x)\mathds{1}_{B^1}(x)
\\&= \E_{x\in G}(\mu_A\circ\mu_A\ast\mu_C(x))\mu_C(x)
\\&=\langle \mu_A\circ\mu_A\ast\mu_C,\mu_C\rangle =\langle \mu_A\circ\mu_A\ast\mu_C,\mu_C\rangle \geq 2^7\sigma,
\end{align*}
where in the first inequality we applied Lemma \ref{regularapproximation}. We now compute, on the other hand,
\begin{align*}
    \E_{y\in G}\mathds{1}_{\gamma(y)\leq \alpha\gamma}\gamma(y)\gamma^{-1}\beta_1^{-1}\langle \mu_A\circ\mu_A,\mu_{C(y)}\circ\mu_C\rangle  &= 
    |G|^{-1}\sum_{y\in B^0}\mathds{1}_{\gamma(y)\leq \alpha\gamma}\gamma(y)\gamma^{-1}\beta_1^{-1}\langle \mu_A\circ\mu_A,\mu_{C(y)}\circ\mu_C\rangle \\ &\leq  
    |G|^{-1}\sum_{y\in B^0}\mathds{1}_{\gamma(y)\leq \alpha\gamma} \gamma(y)^{1-1/p}\gamma^{-1}\beta_1^{-1}\cdot 8\sigma  \\& \leq 
    8\sigma|G|^{-1}\sum_{y\in B^0} \gamma^{-1/p}\alpha^{1-1/p} \beta_1^{-1} \\&= 8\sigma\cdot \frac{|B^0|}{|B^1|}\cdot \gamma^{-1/p}\alpha^{1-1/p}.   
\end{align*}
By the regularity of $B^1$, we have $\frac{|B^0|}{|B^1|}\leq 2$. By the choice of $p$, we have $\gamma^{-1/p}\alpha^{1-1/p} \leq 4$, so this quantity is at most $2^6\sigma$.

We then have
\[\E_{y\sim \mu_C\ast\mu_{B^2}} \mathds{1}_{\gamma(y)\geq \alpha\gamma} \langle \mu_A\circ\mu_A,\mu_{C(y)}\circ\mu_C\rangle \geq 2^6\sigma.\]
We may thus conclude that there is a $t\in G$ such that $\gamma(t)\geq \alpha\gamma$ and $\langle \mu_A\circ\mu_A,\mu_{C}\circ\mu_{C(t)}\rangle \geq 2^6\sigma\geq \sigma$. The claim thus holds with $B'=B^1$, $B'' = B^2$, $A_1=C$, and $A_2 = C\cap (B_2+t)$. 

\bigskip

\textbf{Case (2).} We now assume that $\|\mu_A\circ\mu_A\|_{p(\mu_{B^2+t})}\geq 8\sigma$ for some $t\in G$. 

Let $B'$, $B''$ be such that $B''\subset_{c/d}B'\subset_{c/d}B^2$. 
By Lemma \ref{firstcompare}, $\|\mu_A\circ\mu_A\|_{p(\mu_{B'}\ast\mu_{B'}\ast\mu_{B''}\ast\mu_{B''})}\geq 4\sigma$. By averaging, there is a $t\in G$ such that $\|\mu_A\circ\mu_A\|_{p(\mu_{B'}\circ\mu_{B''+t})}\geq 4\sigma$. 

We apply Lemma \ref{weightedsiftresult} with $B'$ in place of $C_1$, $B''+t$ in place of $C_2$, and $1/2$ in place of $\epsilon$. We locate subsets $A_1\subset B'$, $A_2\subset B''+t$, each with relative densities at least $\frac{1}{4}\alpha^p$, such that 
\[\langle \mu_{A_1}(s)\circ\mu_{A_2}(s),\mathds{1}_{\{x:\mu_A\circ\mu_A(x)\geq 2\sigma\}}\rangle \geq 1-4\cdot 2^{-p}\geq 1/2.\]
We have $\frac{1}{4}\alpha^p\geq \min(\alpha^{O(1)}\cdot \gamma,\alpha^{\lo{\alpha}/\log(2)+O(1)}) $ by the choice of $p$, and 

\[\langle \mu_{A_1}(s)\circ\mu_{A_2},\mu_A\circ\mu_A\rangle \geq \sigma\]
since $\mu_A\circ\mu_A\geq 2\sigma\mathds{1}_{x:\mu_A\circ\mu_A(x)\geq 2\sigma}$.
\end{proof}

We are now ready to prove Lemma \ref{iteratedsiftresult}.
\begin{proof}[Proof of Lemma \ref{iteratedsiftresult}] By averaging, let $t_1\in G$ be such that 
\[\|\mu_A\circ\mu_A\|_{p(\mu_{B^1}\circ\mu_{B^2+t_1})}\geq (1+2^{-5})\mu(B)^{-1}.\]
We replace $p$ by a quantity $p+O(1)$ large enough so that $p$ is an integer and $(1-2^{-7})^p\leq 2^{-9}$. 

We apply Lemma \ref{weightedsiftresult} with $B^1$ in place of $C_1$, $B^2+t_1$ in place of $C_2$, and $2^{-7}$ in place of $\epsilon$. We obtain sets $Z_1$ and $Z_2$ with relative densities at least $\frac{1}{4}\alpha^p$, in $B^1$ and $B^2+t_1$, respectively, and such that 
\[\langle \mu_{Z_1}\circ\mu_{Z_2},\mathds{1}_{\{x:\mu_A\circ\mu_A(x)\geq (1-2^{-7})(1+2^{-5})\mu(B)^{-1}\}}\rangle \geq 1-4(1-2^{-7})^p\geq 1-2^{-7}.\]
It follows that 
\[\langle \mu_A\circ\mu_A,\mu_{Z_1}\circ\mu_{Z_2}\rangle \geq (1-2^{-7})(1-2^{-7})(1+2^{-5})\mu(B)^{-1}\geq (1+2^{-7})\mu(B)^{-1}.\]
Let $c_0$ be an absolute constant to be chosen later. Let $J$ be the largest positive integer such that the following is true. There is a collection $B^1, B^2, B^3, \dots,  B^{2J}$ of regular Bohr sets, a collection of translates $t_1,\dots,t_J$, and sets $Z_1,\dots,Z_{2J}$ with $Z_j\subset B^j$ for $j$ odd and $Z_j\subset B^j+t_{j/2}$ for $j$ even with the following properties.
\begin{enumerate}
    \item For any odd $j\geq 3$, $B^j = B^{j-2}_\delta$ for some $\delta \geq (c/2d)^3$, and $B^{j+1}\subset_{c/d}B^j$.
    \item For each $j$, let $\zeta_j$ denote the relative density of $Z_j$ in $B^j$ for $j$ odd, or $B^j+t_{j/2}$ for $j$ even. For odd $j\geq 3$, $\zeta_j\geq \min(\alpha^{c_0}\zeta_{j-1},\alpha^{c_0}\zeta_{j-2},\alpha^{\lo{\alpha}/\log(2)+c_0})$, and for even $j\geq 4$, $\zeta_j\geq \min(\alpha^{c_0}\zeta_{j-2},\alpha^{c_0}\zeta_{j-3},\alpha^{\lo{\alpha}/\log(2)+c_0})$.
    \item For each odd $j\geq 3$,
    \[\langle \mu_A\circ\mu_A,\mu_{Z_j}\circ\mu_{Z_{j+1}}\rangle \geq 2\langle \mu_A\circ\mu_A,\mu_{Z_{j-2}}\circ\mu_{Z_{j-1}}\rangle.\]
\end{enumerate}
Observe that 
$\langle \mu_A\circ\mu_A,\mu_{Z_1}\circ\mu_{Z_2}\rangle\geq \mu(B)^{-1}$ and $\langle \mu_A\circ\mu_A,\mu_{Z_j}\circ\mu_{Z_{j+1}}\rangle \leq \alpha^{-1}\mu(B)^{-1}$ for all $j$. Thus, by (3), we must have $J= O(\lo{\alpha})$. 

We shall prove the claim with parameter $\sigma = \mu(B)\langle \mu_A\circ\mu_A,\mu_{Z_{2J-1}}\circ\mu_{Z_{2J}}\rangle$, $B'=B^{2J-1}$, $B''=B^{2J}$, and $t=t_J$. Observe that with this choice of $\sigma$, $J = O(\lo{\sigma})$. Let $q = O(1)$ be large enough so that $(1-2^{-10})^q\leq 2^{-14}$. Observe that 
\[\|\mu_A\circ\mu_A\|_{q(\mu_{Z_{2J-1}}\circ\mu_{Z_{2J}})} \geq \|\mu_A\circ\mu_A\|_{1(\mu_{Z_{2J-1}}\circ\mu_{Z_{2J}})} = \langle \mu_A\circ\mu_A,\mu_{Z_{2J-1}}\circ\mu_{Z_{2J}}\rangle = \sigma\mu(B)^{-1}.\]

To obtain the sets $A_1$ and $A_2$, we apply Lemma \ref{weightedsiftresult} with $Z^{2J-1}$ in place of $C_1,$ $Z^{2J}$ in place of $C_2$, $q$ in place of $p$, and $2^{-10}$ in place of $\epsilon$. We obtain sets $A_1\subset Z_{2J-1}$, $A_2\subset Z_{2J}$, such that 
\[\frac{|A_1|}{|Z_{2J-1}|} \geq \frac{1}{4}\alpha^q,\quad \frac{|A_2|}{|Z_{2J}|}\geq \frac{1}{4}\alpha^q,\quad\text{and}\]
\[\langle \mu_{A_1}\circ\mu_{A_2},\mathds{1}_{S}\rangle \geq 1-4\cdot (1-2^{-10})^q \geq 1-2^{-12}.\]
This last inequality verifies conclusion (1). To verify conclusion (2), let $\alpha_1,\alpha_2$ denote the relative densities of $A_1$ in $B'$, $A_2$ in $B''+t$, respectively. We have $\alpha_1=\alpha^{O(1)}\cdot \zeta_{2J-1}$ and $\alpha_2=\alpha^{O(1)}\cdot \zeta_{2J}$. Using the inequalities
\[\zeta_1,\zeta_2\geq \alpha^{p+O(1)},  \quad \zeta_j\geq \min(\alpha^{c_0}\zeta_{j-1},\alpha^{c_0}\zeta_{j-2},\alpha^{c_0}\zeta_{j-3},\alpha^{\lo{\alpha}/\log(2)+c_0}),\quad\text{and}\quad J=O(\lo{\alpha}),  \]
we may conclude $\alpha_i \geq \alpha^{p+O(\lo{\alpha})}$ for $i\in\{1,2\}$. 
To verify conclusion (3), observe that $J = O(\lo{\sigma})$, and $B' = B^1_{\delta}$ for $\delta = (c/d)^{O(J)}$. 

Finally, we must verify conclusion (4). Assume for a contradiction that this fails, and for some $i\in\{1,2\}$, 
\[\langle \mu_A\circ\mu_A,\mu_{A_i}\circ\mu_{A_i}\rangle \geq 2^8\sigma\mu(B)^{-1}.\]
Assume first that $i=1$. We apply Lemma \ref{iteratedsifting} with $2\sigma\mu(B)^{-1}$ in place of $\sigma$, $A_1$ in place of $C$, and $B^{2J-1}$ in place of $B^1$. We obtain a pair of regular Bohr sets, $B^{2J+1}, B^{2J+2}$, with $B^{2J+1}= B^{2J-1}_\delta$ for some $\delta\geq (c/2d)^2$ and $B^{2J+2}\subset_{c/d}B^{2J+1}$. Provided $c_0$ is sufficiently large, we obtain subsets $Z_{2J+1}\subset B^{2J+1}$ and $Z_{2J+2}\subset B^{2J+2}$ such that $\zeta_{2J+1}$ and $\zeta_{2J+2}$ are each at least $\min(\alpha^{c_0}\zeta_{2J-1},\alpha^{\lo{\alpha}/\log(2)+c_0})$. The sets $Z_{2J+1},Z_{2J+2}$ also satisfy
\[\langle \mu_A\circ\mu_A,\mu_{Z_1}\circ\mu_{Z_2}\rangle \geq 2\sigma\mu(B)^{-1}.\]
This contradicts the maximality of $J$.

The case $i=2$ is very similar; the only changes are as follows. First, $B^{2J+1}=B^{2J}_\delta$ for some $\delta\geq (c/2d)^2$, but since $B^{2J}\subset_{c/d}B^{2J-1}$, we have $B^{2J+1}=B^{2J-1}_\delta$ for some $\delta\geq (c/2d)^3$. Second, $\zeta_{2J+1}$ and $\zeta_{2J+2}$ are each at least $\min(\alpha^{c_0}\zeta_{2J},\alpha^{\lo{\alpha}/\log(2)+c_0})$. This also contradicts the maximality of $J$.\end{proof}

With the sifting step complete, we now need a version of Croot-Sisask almost-periodicity, and a version of Chang's lemma to bootstrap it. The following almost-periodicity result is \cite[Theorem 5.1]{SS}.
\begin{theorem}\label{AP} Let $\epsilon\in(0,1)$ and let $k\in\bbn$ be parameters. Let $A,M,L,S\subset G$. Suppose $|A+S|\leq K|A|$ and $\eta = |M|/|L|\leq 1$. Then there is a set $X\subset S$ with $|X|\geq \exp(-O(k^2\epsilon^{-2}\lo{\eta}\log(2K))|S|$ such that 
\[\|\mu_X^{(k)}\ast\mu_A\ast\mu_M\ast\mathds{1}_L - \mu_A\ast\mu_M\ast\mathds{1}_L\|_\infty \leq \epsilon.\]
\end{theorem}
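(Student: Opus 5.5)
The plan is the Croot--Sisask argument in the form of \cite{SS}: random sampling, $L^{2m}$ concentration, pigeonholing over samples, and then the $k$-fold iteration via the triangle inequality. Since the statement is quoted from \cite[Theorem 5.1]{SS}, one may simply cite it. Here is the sketch I would follow.

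\textbf{Step 1: an $L^{2m}$ approximation by sampling.} Write $\mu_A\ast\mu_M\ast\mathds{1}_L(x)=\E_{a\in A}\mu_M\ast\mathds{1}_L(x-a)$. Sample $a_1,\dots,a_m$ independently and uniformly from $A$, and compare the empirical average $\frac1m\sum_i \mu_M\ast\mathds{1}_L(x-a_i)$ with the true value. The summands lie in $[0,1]$. By Marcinkiewicz--Zygmund (or Khintchine), the $2r$-th moment of the error at each $x$ is $O(r/m)^{r}$ times $\E_a|\mu_M\ast\mathds{1}_L(x-a)|^{2r}$.

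\textbf{Step 2: convert to $L^\infty$ via Hölder.} I would not control the $L^\infty$ norm directly. Instead, use Hölder with exponent $2r$ against $\mu_M$ in the duality pairing $\langle \cdot ,\mu_M\rangle$ applied to $\mu_A\ast\mathds{1}_L$. The relevant quantity is $\|\mu_A\ast\mathds 1_L\|_{2r(\mu_M\text{-translate})}$, and the density ratio contributes a factor $\eta^{-1/2r}$. Choosing $r\approx\lo{\eta}$ makes that factor $O(1)$. Taking $m\approx \epsilon^{-2}k^2 r$, the sampled approximation is then $\epsilon/k$-accurate uniformly for most samples.

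\textbf{Step 3: pigeonholing over samples.} Define the set of good tuples $\mathcal{T}\subset A^m$. For $s\in S$, the shifted tuple $(a_i+s)$ lies in $(A+S)^m$, which has size at most $K^m|A|^m$. By pigeonhole there is a fixed tuple $\vec a$ such that $\vec a+s$ is good for many $s$. The resulting set $X\subset S$ of such $s$ has $|X|\ge K^{-m}|S|$, which is $\exp(-O(k^2\epsilon^{-2}\lo{\eta}\log 2K))|S|$. For each $s\in X$, the translate satisfies $\|\tau_s(\mu_A\ast\mu_M\ast\mathds 1_L)-\mu_A\ast\mu_M\ast\mathds 1_L\|_\infty\le 2\epsilon/k$.

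\textbf{Step 4: the $k$-fold convolution.} Telescope $\mu_X^{(k)}\ast F-F$ into $k$ single-translate errors, using the triangle inequality and translation invariance of $\|\cdot\|_\infty$. The total error is at most $\epsilon$. The constants are absorbed by rescaling $\epsilon$.

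\textbf{Main obstacle.} The delicate step is Step 2. The moment bound must be arranged in $L^\infty$ rather than $L^p(\mu_L)$, and the dependence on $\eta=|M|/|L|$ must enter only logarithmically. This is where the Hölder trick with $r\approx\lo{\eta}$ is essential, and it is exactly the content of \cite[Theorem 5.1]{SS}.
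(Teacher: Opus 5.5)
The paper gives no proof of this statement; it simply quotes \cite[Theorem 5.1]{SS}. Your opening remark therefore matches the paper exactly, and your sketch is the standard Croot--Sisask argument behind that citation.

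\textbf{The error in Step 3.} Done correctly, the pigeonhole gives a common point $\vec b\in(A+S)^m$ with $\vec b-s\in\mathcal T$ for all $s$ in a set $X\subseteq S$ of size $\geq\frac12K^{-m}|S|$. The sampled function attached to $\vec b-s$ is $H(\cdot+s)$, where $H=\mu_M\ast g_{\vec b}$ is one fixed function. So what you learn is that every translate $H(\cdot+s)$, $s\in X$, is within $\epsilon/k$ of $F=\mu_A\ast\mu_M\ast\mathds{1}_L$. By the triangle inequality, only the \emph{differences} $s-s'$ with $s,s'\in X$ are $2\epsilon/k$-almost-periods of $F$; this is where your factor $2$ actually comes from. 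Nothing makes an individual $s\in X$ an almost-period.

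This cannot be repaired while keeping $X\subseteq S$. In $\bbz/N\bbz$ with $N>k$, take $A=M=L=\{0\}$ and $S=\{1\}$. Then $K=\eta=1$, so any admissible $X$ must equal $\{1\}$. But $\mu_X^{(k)}\ast F=\mathds{1}_{\{k\}}$ while $F=\mathds{1}_{\{0\}}$. So the statement with $X\subseteq S$ is false as literally written.

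\textbf{What the argument does give.} It gives the set $X'=X-s_0\subseteq S-s_0$ for some $s_0\in X$, every element of which is a $2\epsilon/k$-almost-period. Your telescoping in Step 4 then goes through for $X'$. This is harmless for the paper's application: $\Delta_{1/2}(X')=\Delta_{1/2}(X)$ and $X\subseteq B^7$, so Lemma \ref{Chang} applies unchanged.

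\textbf{Steps 1 and 2 need to be aligned.} The moment bound is needed not for the summands $\mu_M\ast\mathds{1}_L(x-a_i)$. It is needed for $g_{\vec a}=\frac1m\sum_i\mathds{1}_L(\cdot-a_i)$ against $f=\mu_A\ast\mathds{1}_L$, in the \emph{global} $L^{2r}(G)$ norm. Since the summands lie in $[0,1]$, one gets $\E_{\vec a}\|g_{\vec a}-f\|_{2r}^{2r}\leq 2(Cr/m)^r|L|/|G|$, so by Markov at least half of all tuples satisfy $\|g_{\vec a}-f\|_{2r}\lesssim\sqrt{r/m}\,(|L|/|G|)^{1/2r}$. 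For such tuples, H\"older gives, for every $x$ simultaneously,
\[
|\mu_M\ast(g_{\vec a}-f)(x)|\leq (|G|/|M|)^{1/2r}\,\|g_{\vec a}-f\|_{2r}\lesssim\sqrt{r/m}\,\eta^{-1/2r}.
\]
The two factors of $|G|$ cancel, which is precisely why no union bound over $x$ (costing $\log|G|$) is needed. Your phrase ``the $L^{2r}$ norm with respect to a $\mu_M$-translate'' obscures this. A bound on one translate is useless; it is the global norm that controls all translates at once. With this made explicit, the choices $r\approx\lo{\eta}$ and $m\approx k^2\epsilon^{-2}\lo{\eta}$ give the stated size $\exp(-O(k^2\epsilon^{-2}\lo{\eta}\log(2K)))|S|$.
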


\noindent Like in the previous section, for a set $X\subset G$, we define 
\[\Delta_{1/2}(X) = \{\gamma\in\hat{G}:|\hat{\mu_X}(\gamma)|\geq 1/2\}.\]
The following variant of Chang's lemma is \cite[Proposition 5.3]{SS}.
\begin{lemma}\label{Chang} Let $\nu\in(0,1]$.
Let $B$ be a regular Bohr set with rank $d$ and radius $\rho$. Let $X\subset B$ with relative density $\xi$.
Then there is a regular Bohr set $B'$ with rank at most $d+O(\lo{\xi})$ and radius at least $\Omega(\rho\nu/d^2\lo{\xi})$ such that $|1-\gamma(t)|\leq \nu$ for all $\gamma\in\Delta_{1/2}(X)$ and $t\in B'$.

\end{lemma}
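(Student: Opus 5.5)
The plan is to follow the standard ``local Chang'' argument, in the style of Sanders and Shkredov, run relative to a narrow dilate of $B$. Fix a small dilate $\tilde B = B_{\delta}\subset_{c/d} B$ with $\delta\asymp c/d$. Since $X\subset B$ and $B$ is regular, Lemma \ref{narrowbohrsupport} gives $\|\mu_X\ast\mu_{\tilde B}-\mu_X\|_1$ no larger than what we need on $\operatorname{supp}\mu_X$. Hence for $\gamma\in\Delta_{1/2}(X)$ we still have $|\widehat{\mu_X}(\gamma)|\cdot|\widehat{\mu_{\tilde B}}(\gamma')|$ large for suitable twists $\gamma'$. This reduces the problem to controlling the structure of the large spectrum of $\mu_X$ modulo the large spectrum of $\mu_{\tilde B}$.

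\textbf{Step 1: local dissociativity.} Call $\Lambda\subset\hat G$ \emph{$\tilde B$-dissociated} if no nontrivial combination $\sum_{\lambda}\epsilon_\lambda\lambda$ with $\epsilon_\lambda\in\{-1,0,1\}$ lies in the spectrum $\{\psi:|1-\psi(t)|\le 1/2 \ \forall t\in \tilde B_{c/d}\}$. Choose $\Lambda\subset\Delta_{1/2}(X)$ maximal $\tilde B$-dissociated.

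\textbf{Step 2: bound $|\Lambda|$.} I would prove $|\Lambda|=O(\lo{\xi})$ via a local Rudin inequality. For a $\tilde B$-dissociated $\Lambda$ and coefficients $a_\lambda$, establish
\[
\Bigl\|\sum_\lambda a_\lambda\lambda\Bigr\|_{m(\mu_{B})}\lesssim \sqrt m\,\|a\|_2 .
\]
The proof expands the $m$-th moment and uses dissociativity to kill off-diagonal terms, up to errors controlled by regularity of $B$. Then test against $f=\sum_{\lambda\in\Lambda}\overline{\operatorname{sgn}\widehat{\mu_X}(\lambda)}\lambda$. On one hand $\langle \mu_X,f\rangle\ge|\Lambda|/2$. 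On the other hand, by Hölder with $m\asymp\lo{\xi}$, it is at most $\xi^{-1/m}\|f\|_{m(\mu_B)}\lesssim\sqrt{m|\Lambda|}$. Comparing the two gives $|\Lambda|\lesssim\lo{\xi}$. I expect this local Rudin inequality, with the error terms coming from regularity kept under control, to be the main obstacle. I would first try Sanders' approach of averaging over $\mu_{\tilde B}$ so that the non-dissociated cross terms contribute only $O(c)$.

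\textbf{Step 3: span and choice of $B'$.} By maximality, every $\gamma\in\Delta_{1/2}(X)$ can be written as $\gamma=\psi\prod_\lambda\lambda^{\epsilon_\lambda}$, where $\epsilon_\lambda\in\{-1,0,1\}$ and $\psi$ lies in the spectrum of $\tilde B_{c/d}$. Set $\nu_0=\nu/(2|\Lambda|+2)$, and let $B'$ be a regular dilate of the intersection of $B_{\delta'}$ with the Bohr set $\{t:|1-\lambda(t)|\le\nu_0\ \forall\lambda\in\Lambda\}$. Here $\delta'\asymp\nu c^2/d^2$ is chosen so that $|1-\psi(t)|\le\nu/2$ on $B_{\delta'}$; this uses the usual linear-scaling estimate for characters on dilates of a Bohr set. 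For $t\in B'$ the triangle inequality then gives
\[
|1-\gamma(t)|\le|1-\psi(t)|+\sum_\lambda|1-\lambda(t)|\le\nu .
\]

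\textbf{Step 4: parameters.} $B'$ has rank $d+|\Lambda|=d+O(\lo{\xi})$. Its radius is $\min(\rho\delta',\nu_0)\gtrsim\rho\nu/(d^2\lo{\xi})$, after rescaling the new frequencies so that a single radius parameter describes the intersection. Passing to a regular dilate costs only a constant factor in the radius.
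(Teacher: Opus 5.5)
The paper gives no proof of this lemma; it quotes it from \cite[Proposition 5.3]{SS}. Your architecture is the standard local Chang scheme: a maximal subset of $\Delta_{1/2}(X)$ dissociated relative to a narrow dilate, a local Rudin inequality bounding its size, the span argument, and intersecting a narrow dilate of $B$ with a Bohr set on $\Lambda$ of radius $\asymp\nu/|\Lambda|$. Steps 1, 3 and 4 are fine in structure. The linear-scaling fact in Step 3 also holds: $t\in B_{\eta/m}$ implies $jt\in B_\eta$ for $j\le m$, which forces $|1-\psi(t)|=O(1/m)$.

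\textbf{The gap: Step 2.} All the content of the lemma lies in the local Rudin inequality, and the mechanism you propose for it does not work, for two reasons.
\begin{itemize}
\item Expanding $|\sum a_\lambda\lambda|^m$ produces frequencies $\sum n_\lambda\lambda$ with $|n_\lambda|\ge 2$, about which $\{-1,0,1\}$-dissociativity says nothing. For instance, $\{3^j\}\subset\bbz$ is dissociated yet satisfies $3\cdot 3^j=3^{j+1}$. Even globally, Rudin's inequality for dissociated sets is proved with Riesz products, not by killing off-diagonal terms.
\item More seriously, in the local setting a nonzero combination $\psi_\epsilon=\sum\epsilon_\lambda\lambda$ outside your spectrum only has $|\widehat{\mu_{\tilde B}}(\psi_\epsilon)|=O(c)$ by regularity, not $0$. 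The Riesz product $R=\prod_\lambda(1+r_\lambda\mathrm{Re}(\omega_\lambda\lambda))$ has Fourier mass up to $2^{|\Lambda|}$ spread over $3^{|\Lambda|}$ such frequencies. Averaging once over $\mu_{\tilde B}$ therefore leaves an error $O(c)\,2^{|\Lambda|}$, which is useless because $|\Lambda|$ is exactly what you are trying to bound.
\end{itemize}
So nothing in your sketch supports $\|\sum a_\lambda\lambda\|_{m(\mu_B)}\lesssim\sqrt m\|a\|_2$ at the scale $\tilde B_{c/d}\approx B_{c^2/d^2}$, which is independent of $|\Lambda|$ and $\xi$.

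\textbf{A fix: damp $K$ times.} Fix $K\asymp\lo{\xi}$, let $B^*=B_\eta$ be regular with $\eta\asymp 1/(dK)$, and take $\Lambda$ maximal dissociated relative to $B^*_{c/d}$.
\begin{itemize}
\item By regularity of $B^*$, $|\widehat{\mu_{B^*}}(\psi)|\le 400c$ whenever $|1-\psi(t)|>1/2$ for some $t\in B^*_{c/d}$.
\item If $|\Lambda|\ge K$, take $\Lambda'\subset\Lambda$ with $|\Lambda'|=K$. Lemma \ref{regularapproximation} with $L=K$ gives $\langle R,\mu_B\rangle\le 2\|R\ast\mu_{B^*}^{(K)}\|_\infty\le 2(1+2^K(400c)^K)\le 4$.
\item Combined with $e^{sy}\le\cosh s+y\sinh s$ for $|y|\le 1$, this yields $\|\mathrm{Re}\sum_{\lambda\in\Lambda'}a_\lambda\lambda\|_{m(\mu_B)}\lesssim\sqrt m\|a\|_2$. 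Your H\"older argument then gives a contradiction, so $|\Lambda|<K$.
\end{itemize}
The price is that the dissociativity scale becomes $B^*_{c/d}\approx B_{c/(d^2\lo{\xi})}$, so Step 3 gives $\delta'\asymp\nu c/(d^2\lo{\xi})$ rather than $\nu c^2/d^2$. The final radius is still $\Omega(\rho\nu/d^2\lo{\xi})$, but the factor $\lo{\xi}$ is paid in the dilate itself, not only through $\nu_0$.

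\textbf{A false side claim.} Your opening assertion that Lemma \ref{narrowbohrsupport} makes $\|\mu_X\ast\mu_{\tilde B}-\mu_X\|_1$ small is false. That lemma controls $\mu_{B_{1+\delta}}\ast\mu$, and a random $X\subset B$ is far from $\tilde B$-invariant. Nothing later uses it, so drop it.
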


Finally, with all these preliminary results recorded, we are ready to prove Proposition \ref{prop-it}. At the beginning, for technical reasons, we pass from $A$ and $\alpha$ to the slightly sparser $A'$ and smaller $\alpha'$. This is unrelated to the other details of the argument and the reader may choose to ignore it on a first reading. 
\begin{proof}[Proof of Proposition \ref{prop-it}] We begin by applying Lemma \ref{narrowdensity} with $B^1$ in place of $B'$, $B^2$ in place of $B''$, and $\epsilon = 2^{-13}$. If we are in case (2), then we have established the density increment with $\sigma = 1+2^{-12}$. Otherwise, assume we are in case (1), so there is a translate $y$ such that for each $i\in\{1,2\}$, the relative density of $A+y\cap B^i$ in $B^i$ is at least $(1-2^{-11})\alpha$. We will let $A' = A+y\cap B^1$ and $\alpha'$ be the relative density of $A'$ in $B^1$. 

Observe that $2\cdot B^2$ is a regular Bohr set with the same rank and radius as $B^2$, and $2\cdot B^2\subset B^1_{c/d}$ by the traingle inequality. We may thus apply Lemma \ref{liftunbalance} with $B^1$ in place of $B$, $2\cdot B^2$ in place of $B'$, and $2\cdot (A+y\cap B^2)$ in place of $C$. If we are in case (1), then there are at least 
\[(1-2^{-11})^3\frac{1}{2}\alpha^3N^2\mu(B^1)\mu(B^2)\geq \frac{1}{4}\alpha^3N^2\mu(B^1)\mu(B^2)\]
arithmetic progressions in $A$, so we are done. If we are in case (2), we have a density increment with $\sigma= 2(1-2^{-11})$. Let us thus assume that we are in case (3), so that there are Bohr sets $B^3$ and $B^4$ with $B^4\subset_{c/d}B^3\subset_{c/d}(2\cdot B^2)$ such that, if $\nu = \mu_{B^3}\circ\mu_{B^3}\ast\mu_{B^4}\circ\mu_{B^4}$, then there is a $p = O(\lo{\alpha})$ such that 
\[\|\mu_{A'}\circ\mu_{A'}\|_{p(\nu)}\geq (1+2^{-5})\mu(B^1)^{-1}.\]

We now apply Lemma \ref{iteratedsiftresult} with $A'$ in place of $A$, $B^1$ in place of $B$, $B^3$ in place of $B^1$, and $B^4$ in place of $B^2$. We obtain a paramter $\sigma'\in [1+2^{-7},(\alpha')^{-1}]$, regular Bohr sets $B^5$, $B^6$, a translate $t$, and subsets $A_1\subset B^5$, $A_2\subset B^6+t$ such that all the following are true.
\begin{enumerate}
    \item If $S = \{x:\mu_{A'}\circ\mu_{A'}\geq (1-2^{-10})\sigma'\mu(B^1)^{-1}\}$, then $\langle \mu_{A_1}\circ\mu_{A_2},\mathds{1}_S\rangle \geq 1-2^{-12}$.
    \item The relative densities of $A_1$ in $B^5$ and $A_2$ in $B^6+t$ are each at least $\alpha^{O(\lo{\alpha})}$. 
    \item $B^5 = B^3_\delta$, where $\delta = (c/2d)^{O(\log(\sigma'))}$ and $B^6\subset_{c/d}B^5$.
    \item For each $i\in\{1,2\}$, $\langle \mu_{A'}\circ\mu_{A'},\mu_{A_i}\circ\mu_{A_i}\rangle \leq 2^8\sigma'\mu(B^1)^{-1}$.
\end{enumerate}

From condition (1), we have $\mu_{A_1}\ast\mu_{-A_2}\ast\mathds{1}_{-S}(0) \geq 1-2^{-12}$. Let $B^7\subset_{c/d}B^6.$ We apply Theorem \ref{AP} with $\epsilon = 2^{-12}$, $k$ a parameter to be chosen later, $-A_2$ in place of $A$, $A_1$ in place of $M$, $-S$ in place of $L$, and $B^7$ in place of $S$ (with apologies for the clash in notation). Without loss of generality, we can take $S$ to be suppoprted in $B^5+B^6+t$. Observe that \[|A_2+B^7|\leq |B^6+B^7|\leq 2|B^6|\leq \alpha^{-O(\lo{\alpha})}|A_2|\]
and 
\[\frac{|A_1|}{|S|} \geq \frac{|A_1|}{2|B_5|}\geq \alpha^{O(\lo{\alpha})}.\]
We thus obtain a set $X\subset B^7$ with relative density at least $\exp(-O(k^2\lo{\alpha}^4))$ such that 
\[\|\mu_X^{(k)}\ast\mu_{A_1}\circ\mu_{A_2}\circ\mathds{1}_S - \mu_{A_1}\circ\mu_{A_2}\circ\mathds{1}_S\|_\infty \leq 2^{-12}.\]
We thus have 
\[\mu_X^{(k)}\ast\mu_{A_1}\circ\mu_{A_2}\circ\mathds{1}_S(0)\geq 1-2^{-11}.\]
Since $\mu_{A'}\circ\mu_{A'}\geq (1-2^{-10})\sigma'\mu(B^1)^{-1}$ pointwise on $S$, we obtain
\[\mu_X^{(k)}\ast\mu_{A_1}\circ\mu_{A_2}\ast\mu_{A'}\circ\mu_{A'}(0) \geq (1-2^{-10})(1-2^{-11})\sigma'\mu(B^1)^{-1}\geq (1-2^{-9})\sigma'\mu(B^1)^{-1}.\]

We apply Lemma \ref{Chang} with $X\subset B^7$ and $\nu>0$ a parameter to be chosen later. We obtain a regular Bohr set $B^8$ with rank at most $d+O(k^2\lo{\alpha}^4)$ and radius at least $\rho \cdot (c/2d)^{O(\lo{\sigma'})}\cdot \nu/d^2k^2\lo{\alpha}^4$ (here, we are using that the radius of $B^7$ is at least $\rho\cdot (c/2d)^{O(\lo{\sigma'})}$) such that 
\[|1-\gamma(x)|\leq \nu\text{ for all }\gamma\in\Delta_{1/2}(X)\text{ and }x\in B^8.\]
We abbreviate $F=\mu_{A'}\circ\mu_{A'}\ast\mu_{A_1}\circ\mu_{A_2}$. 

For any $x\in B^8$, we have

\begin{align*}    
\|\mu_X^{(k)}\ast F (\cdot + x) - \mu_X^{(k)}\ast F\|_\infty &\leq \sum_{\gamma\in\hat{G}} |\hat{\mu_X}(\gamma)|^k|\hat{F}(\gamma)||\gamma(x)-1|\\
&=\sum_{\gamma\in\Delta_{1/2}(X)} |\hat{\mu_X}(\gamma)|^k|\hat{F}(\gamma)||\gamma(x)-1|+\sum_{\gamma\notin\Delta_{1/2}(X)} |\hat{\mu_X}(\gamma)|^k|\hat{F}(\gamma)||\gamma(x)-1| 
\\&\leq (\nu+2^{1-k})\sum_{\gamma\in\hat{G}}|\hat{F}(\gamma)|
\\&\leq (\nu+2^{1-k})\left(\sum_{\gamma\in\hat{G}}|\hat{\mu_{A'}}(\gamma)|^2|\hat{\mu_{A_1}}(\gamma)|^2\right)^{1/2}\left(\sum_{\gamma\in\hat{G}}|\hat{\mu_{A'}}(\gamma)|^2|\hat{\mu_{A_2}}(\gamma)|^2\right)^{1/2}
\\&=(\nu+2^{1-k})\langle \mu_{A'}\circ\mu_{A'},\mu_{A_1}\circ\mu_{A_1}\rangle^{1/2}\langle \mu_{A'}\circ\mu_{A'},\mu_{A_2}\circ\mu_{A_2}\rangle^{1/2}
\\&\leq (\nu+2^{1-k})2^8\sigma'\mu(B^1)^{-1}.
\end{align*}
We can choose $\nu = \Omega(1)$ and $k = O(1)$ so that the above quantity is at most $2^{-9}\sigma'\mu(B^1)^{-1}.$ By averaging, we have 
\[\|\mu_{B^8}\ast\mu_X^{(k)}\ast F-\mu_X^{(k)}\ast F\|_\infty \leq 2^{-9}\sigma'\mu(B^1)^{-1},\]
so 
\[\|\mu_{B^8}\ast\mu_X^{(k)}\ast F\|_\infty \geq (1-2^{-8})\sigma'\mu(B^1)^{-1}.\]
We thus have 
\[\|\mu_{B^8}\ast\mu_{A'}\|_\infty \geq (1-2^{-8})\sigma'\mu(B^1)^{-1},\]
so there is a translate $z$ of $A'$ such that the relative density of $(A'+z)\cap B^8$ in $B^8$ is at least $(1-2^{-8})\sigma'\alpha' \geq (1-2^{-8})(1-2^{-11})\sigma'\alpha$. In turn, the relative density of $(A+y+z)\cap B^8$ in $B^8$ is also at least $(1-2^{-8})(1-2^{-11})\sigma'\alpha$.
Since $(1-2^{-8})(1-2^{-11})\sigma'\geq (1-2^{-8})(1-2^{-11})(1+2^{-7})\geq 1+2^{-10}$, we have obtained the density increment with $B'=B^8$ and $\sigma = (1-2^{-8})(1-2^{-11})\sigma'$.
\end{proof}

\begin{appendices}
    \section{Bohr Sets}\label{Bohrappendix}
In this appendix, we collect the facts about Bohr sets required to carry out our argument. For a more detailed introduction to Bohr sets, one may consult \cite{TV}, Chapter 4. 

\begin{definition}[Bohr set] For $\Lambda\subset \hat{G}$ and $\rho\geq 0$, we define the Bohr set $B$ with frequency set $\Lambda$ and radius $\rho$ to be 
\[\{x\in G:|1-\gamma(x)|\leq \rho\text{ for all }\gamma\in\Lambda\}.\]
The quantity $|\Lambda| $ is the rank of $B$.
\end{definition}

When we speak of a Bohr set, we are implicitly referring to the triple $(\Lambda,\rho,B)$, and thus it makes sense to consider, for $\delta>0$, the dilate $B_\delta$, which is the Bohr set with frequency set $\Lambda$ and radius $\delta\rho$. However, when we write an inclusion $B'\subset B$ of two Bohr sets, we refer only to the inclusion as sets, and do not impose any relation between the corresponding frequencies or radii.

\begin{definition}[Regular Bohr set] A Bohr set $B$ of rank $d$ is regular if for all $|\kappa|\leq 1/100d$, we have 
\[(1-100d|\kappa|)|B|\leq |B_{1+\kappa}|\leq (1+100d|\kappa|)|B|.\]
\end{definition}

\begin{lemma} For any Bohr set $B$ there is a $\rho\in [1/2,1]$ such that $B_\rho$ is regular.
\end{lemma}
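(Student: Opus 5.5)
The plan is the standard Bourgain argument: show that $f(\rho')=|B_{\rho'}|$ cannot grow too fast on $[1/2,1]$, so at some point its logarithmic derivative is controlled. Let $d$ be the rank of $B$, and consider $g(x)=\log_2|B_{2^x}|$ for $x\in[-1,0]$. Since $B_{\rho}\subset B_{\rho'}$ for $\rho\le\rho'$, $g$ is nondecreasing. The key input is a covering estimate
\[|B_{2\rho'}|\le 4^{d}|B_{\rho'}|,\]
or more generally $|B_{\rho'}|\le C^d|B_{\rho'/2}|$ for an absolute constant $C$. I would prove it by covering: partition the torus $(S^1)^{\Lambda}$ into $O(1)^d$ cells of small angular diameter. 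Any two points $x,y\in B_{2\rho'}$ whose images $(\gamma(x))_{\gamma\in\Lambda}$ fall in the same cell satisfy $x-y\in B_{\rho'}$, using $|1-\gamma(x-y)|=|\gamma(x)-\gamma(y)|$. So $|B_{2\rho'}|\le O(1)^d|B_{\rho'}|$. Applied on $[1/2,1]$, this gives $g(0)-g(-1)\le O(d)$.

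Next, suppose for contradiction that no $\rho\in[1/2,1]$ makes $B_\rho$ regular. Then for every $x$ there is a $\kappa$ with $|\kappa|\le 1/100d$ such that $|B_{\rho(1+\kappa)}|$ differs from $|B_\rho|$ by a factor outside $[1-100d|\kappa|,1+100d|\kappa|]$. In log terms, $g$ jumps by at least roughly $100d|\kappa|/\ln 2$, up to constants, over an interval of length about $|\kappa|$ near $x$. Concretely, irregularity at $\rho$ yields an interval $I_\rho\ni\log_2\rho$ of length $\asymp|\kappa|$ on which the increment of $g$ exceeds $c\cdot 100d$ times the length of $I_\rho$. I would restrict to $\rho\in[1/2+\epsilon,1-\epsilon]$ so that these intervals stay inside a slightly larger window, say $[1/4,2]$, where the doubling bound still gives a total increment of $O(d)$. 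The degenerate case $|B_{1+\kappa}|<(1-100d|\kappa|)|B|$ is handled the same way, using $\log(1-u)\le -u$ and the interval on the other side.

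Finally, apply the Vitali covering lemma to the family $\{I_\rho\}$. This gives disjoint intervals covering at least a third of the measure of the window $[-1,0]$ in the $x$-variable. Summing increments over these disjoint intervals gives a total increase of $g$ of at least $\gtrsim 100d\cdot\tfrac13$. For the constant $100$ that exceeds the $O(d)$ bound from the doubling estimate, which is the contradiction.

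The main obstacle is the constant bookkeeping. The doubling constant (a cube covering gives something like $2^{O(d)}$ with an explicit base) has to be beaten by the $100$ in the definition of regularity, and passing between multiplicative ratios and $\log_2$ costs constant factors. If the constants don't close with interval $[1/2,1]$ directly, I would sharpen the covering argument (using arcs of length $\rho'$ on each circle, giving $|B_{2\rho'}|\le 4^d|B_{\rho'}|$, i.e. $g$ increases by at most $2d$ per unit). I would then check that $100/(3\ln 2\cdot\text{small constant})>2$, which holds comfortably.
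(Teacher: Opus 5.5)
The paper gives no proof of this lemma; it is quoted as a standard fact with a pointer to \cite{TV}, Chapter 4. Your argument (monotonicity of $x\mapsto \log_2|B_{2^x}|$, a doubling bound $|B_{2\rho'}|\le C^d|B_{\rho'}|$, and a Vitali covering of the irregular scales to force a total increase exceeding $O(d)$) is exactly the standard Bourgain argument found there, and it is correct.

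Two cosmetic adjustments are needed. First, with the paper's chordal normalization $|1-\gamma(x)|\le\rho$, the arc covering gives $C=6$ rather than $4$, and it should partition only the product of arcs $\{z:|1-z|\le 2\rho'\}$ containing the image of $B_{2\rho'}$, not the whole torus. Second, for the uncountable family $\{I_\rho\}$ of closed intervals, use the $5r$-covering lemma, or enlarge the intervals slightly to open ones and use compactness before applying the finite one-third version. The factor $100$ in the definition of regularity leaves ample room for both losses.
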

\begin{proposition}\label{bohrsizebound} If $\rho\in(0,1)$ and $B$ is a Bohr set of rank $d$, then $|B_\rho|\geq (\rho/4)^d|B|$.
\end{proposition}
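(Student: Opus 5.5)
The plan is to obtain this from the definition of a Bohr set by a volume-packing argument. Let $\Lambda$ be the frequency set of $B$, with $|\Lambda|=d$, and let $r$ be the radius of $B$. We will show that $B$ can be covered by at most $(4/\rho)^d$ translates of $B_\rho$. This immediately gives $|B|\leq (4/\rho)^d|B_\rho|$, which is the claim.

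\textbf{Step 1: a grid partition of $B$.} Consider the map $\phi:G\to\mathbb{T}^d$, $x\mapsto(\gamma(x))_{\gamma\in\Lambda}$. The condition $x\in B$ says that each coordinate $\gamma(x)$ lies in the arc $\{z\in\mathbb{T}:|1-z|\leq r\}$ around $1$.
\begin{itemize}
\item If $r\geq 2$, this arc is the whole circle. Then $B=G$, and we may replace $r$ by $2$ throughout; this only shrinks $B_\rho$ in the relevant sense and the argument below still applies.
\item For $r\leq 2$, measure the arc in chordal terms and split it into at most $\lceil 2/\rho\rceil\leq 4/\rho$ sub-arcs. Each sub-arc should have chordal diameter at most $\rho r$.
\end{itemize}
Taking products of these sub-arcs over the $d$ coordinates partitions $\phi(B)$ into at most $(4/\rho)^d$ cells.

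\textbf{Step 2: each cell lies in one translate of $B_\rho$.} Fix a nonempty cell and pick any $x_0\in B$ whose image lies in it. For any other $x$ in the same cell and every $\gamma\in\Lambda$,
\[|1-\gamma(x-x_0)|=|\gamma(x_0)-\gamma(x)|\leq\rho r,\]
since $\gamma$ is unimodular and $\gamma(x)$, $\gamma(x_0)$ lie in the same sub-arc. Hence $x-x_0\in B_\rho$, so the whole cell is contained in $x_0+B_\rho$.

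Counting cells then gives $|B|\leq(4/\rho)^d|B_\rho|$.

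\textbf{Main obstacle.} The only step needing care is the arc subdivision: the condition $|1-z|\leq r$ is chordal, not angular, so we need sub-arcs of chordal diameter at most $\rho r$ with at most $4/\rho$ of them. The approach is to parametrise the arc by angle, $z=e^{i\theta}$ with $|\theta|\leq\theta_0$ where $2\sin(\theta_0/2)=r$. We then use that chordal distance $2|\sin((\theta-\theta')/2)|$ is at most angular distance, while $\theta_0\leq \pi r/2$ by concavity of $\sin$ on $[0,\pi/2]$. So it suffices to cut the angular interval of length $2\theta_0\leq\pi r$ into pieces of angular length $\rho r$. The number of pieces is $\lceil\pi/\rho\rceil\leq 4/\rho$ for $\rho\leq 1$ (indeed $\pi/\rho+1\leq 4/\rho$ since $\rho\leq 4-\pi$ fails only for $\rho>0.85$, where $\lceil\pi/\rho\rceil\leq 4\leq 4/\rho$ anyway). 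This yields exactly the constant $4$.
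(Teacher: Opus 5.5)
Your covering argument is correct, and since the paper quotes this proposition as a standard fact without proof, there is nothing to compare it against; what you give is the usual pigeonholing proof. The steps all check out: the map $x\mapsto(\gamma(x))_{\gamma\in\Lambda}$, the bound $\theta_0\le \pi r/2$ from Jordan's inequality, the cut into $\lceil \pi/\rho\rceil\le 4/\rho$ angular pieces of length $\rho r$, the identity $|1-\gamma(x-x_0)|=|\gamma(x_0)-\gamma(x)|$, and the reduction of $r\ge 2$ to $r=2$.

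One blemish: the count $\lceil 2/\rho\rceil$ asserted in Step 1 is false in general. For $r$ near $2$ and small $\rho$ the arc is almost the whole circle, and it needs about $\pi/\rho$ pieces of chordal diameter $\rho r$. Your final paragraph replaces this with the correct count $\lceil \pi/\rho\rceil$, so nothing breaks, but you should delete the earlier claim.
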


The following lemma is a slight generalization of \cite{BS1}, Lemma 27.

\begin{lemma}\label{narrowbohrsupport} Suppose $B$ is a regular Bohr set of rank $d$. Let $\rho,\delta>0$ be such that $\rho+\delta<1/100d$. Let $\mu:G\to \mathbb{R}_{\geq 0}$ be a probability measure supported on $B_\rho$. Then 
\[\|\mu_{B_{1+\delta}}\ast \mu - \mu_{B_{1+\delta}}\|_1\leq 200(\rho+\delta)d.\]
\begin{proof}
    By the triangle inequality,
    \[\E_{x\in G}|\mu_{B_{1+\delta}}\ast \mu (x)-\mu_{B_{1+\delta}}(x)|  \leq  \E_{y\in G}\mu(y)\E_{x\in G}|\mu_{B_{1+\delta}}(x-y)-\mu_{B_{1+\delta}}(x)| = \E_{y\in G}\mu(y)\frac{|(y+B_{1+\delta})\Delta B_{1+\delta}|}{|B_{1+\delta}|},\]
    where $\Delta$ is the symmetric set difference. By the definition of regularity, since $\rho+\delta<1/100d$, this quantity is at most $200(\rho+\delta)d$. 
\end{proof}
\end{lemma}

The following is \cite{BS1}, Lemma 28.

\begin{lemma}\label{regularapproximation} Let $B$ be a regular Bohr set of rank $d$ and let $L\geq 1$ be any integer. If $\nu:G\to\mathbb{R}_{\geq 0}$ is supported on $LB_\rho$, where $\rho\leq 1/100Ld$ and $\|\nu\|_1=1$, then 
\[\mathds{1}_B\leq \mathds{1}_{B_{1+L\rho}} \ast\nu,\text{ and}\]
\[\mu_B\leq 2\mu_{B_{1+L\rho}}\ast\nu.\]
\end{lemma}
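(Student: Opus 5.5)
Both inequalities will follow by exhibiting $\mathds{1}_B$ as dominated by an average of translates of $\mathds{1}_{B_{1+L\rho}}$, where the translates are distributed according to $\nu$. Writing out the convolution,
\[\mathds{1}_{B_{1+L\rho}}\ast\nu(x) = \E_y \nu(y)\mathds{1}_{B_{1+L\rho}}(x-y),\]
so the first inequality reduces to a pointwise statement. For every $x\in B$ and every $y\in\supp(\nu)\subset LB_\rho$ we need $x-y\in B_{1+L\rho}$. Once this holds, $\mathds{1}_{B_{1+L\rho}}\ast\nu(x)=\E_y\nu(y)=\|\nu\|_1=1$ for $x\in B$. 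For $x\notin B$ the left side vanishes while the right side is nonnegative, so the inequality is trivial there.

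The key step is the containment $B - LB_\rho\subset B_{1+L\rho}$, which is just the triangle inequality for Bohr sets. If $y=y_1+\dots+y_L$ with each $y_i\in B_\rho$, then for every frequency $\gamma\in\Lambda$ the identity $|1-\gamma(a+b)|\leq |1-\gamma(a)|+|1-\gamma(b)|$ applies, using $|\gamma(a)|=1$. This gives
\[|1-\gamma(x-y)|\leq |1-\gamma(x)| + \sum_i|1-\gamma(y_i)|\leq r + Lr\rho = (1+L\rho)r,\]
where $r$ is the radius of $B$. Symmetry of $B_\rho$ handles the minus sign. Hence $x-y\in B_{1+L\rho}$, and the first inequality follows.

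For the second inequality, I divide the first by $\mu(B)$, which gives $\mu_B\leq \mu(B)^{-1}\mathds{1}_{B_{1+L\rho}}\ast\nu = \frac{|B_{1+L\rho}|}{|B|}\mu_{B_{1+L\rho}}\ast\nu$. It then suffices to show $|B_{1+L\rho}|\leq 2|B|$. This is where regularity enters: with $\kappa=L\rho\leq 1/100d$, the regularity definition gives $|B_{1+\kappa}|\leq (1+100d\kappa)|B|\leq 2|B|$.

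The only point needing care is this last step, namely checking that the hypothesis $\rho\leq 1/100Ld$ is precisely what keeps $\kappa$ inside the range where regularity applies. Everything else is bookkeeping.
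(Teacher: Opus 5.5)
Your proof is correct and follows the paper's argument: for $x\in B$ and $y\in\supp(\nu)$ you have $x-y\in B_{1+L\rho}$, so averaging against $\nu$ gives the first inequality, and regularity with $\kappa=L\rho\leq 1/100d$ gives $|B_{1+L\rho}|\leq 2|B|$ for the second. The only difference is that you spell out the Bohr-set triangle inequality and the symmetry behind $B-LB_\rho\subset B_{1+L\rho}$, which the paper leaves implicit.
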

\begin{proof} For any $x\in B$ and $y\in \supp(\nu)$, we have $\mathds{1}_{B_{1+L\rho}}(x-y) = 1$. The first inequality thus follows by averaging. The second inequality follows from the first and that $\frac{|B_{1+L\rho}|}{|B|}\leq 2$ due to the regularity of $B$. 
\end{proof}

The following is a slight generalization of \cite{BS1}, Proposition 19.

\begin{lemma}\label{firstcompare}Let $p\geq 2$ be an even integer. Let $f:G\to\mathbb{R}$, let $t\in G$, let $B\subset G$, and $B',B''\subset B_{1/400\text{rk}(B)}$ all be regular Bohr sets. Then \[\|f\circ f\|_{p(\mu_{B'}\circ\mu_{B'}\ast\mu_{B''}\circ\mu_{B''})}^p\geq \frac{1}{2}\|f\ast f \|_{p(\mu_{B+t})}^p.\]
Also,
\[\|f\circ f\|_{p(\mu_{B'}\circ\mu_{B'}\ast\mu_{B''}\circ\mu_{B''})}^p\geq \frac{1}{2}\|f\circ f \|_{p(\mu_{B+t})}^p.\]
\end{lemma}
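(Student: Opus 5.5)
We follow the approach of \cite{BS1}, Proposition 19: write $\nu = \mu_{B'}\circ\mu_{B'}\ast\mu_{B''}\circ\mu_{B''}$ and $\lambda = \mu_{B'}\ast\mu_{B''}$, so that $\nu = \lambda\circ\lambda$ by commutativity and the symmetry of Bohr sets. Since $p$ is even, we expand
\[\|f\circ f\|_{p(\nu)}^p = \E_x (f\circ f(x))^p\,\lambda\circ\lambda(x) = \E_{y_1,\dots,y_p}\Big(\E_{z}\prod_{i=1}^p f(y_i+z)\Big)^2\cdot(\text{weights}),\]
more precisely
\[\|f\circ f\|_{p(\nu)}^p=\E_{y_1,\dots,y_p}\E_{u,v}\lambda(u)\lambda(v)\prod_i f(y_i+u)f(y_i+v) = \E_{\mathbf y}\Big(\E_u \lambda(u)\prod_{i=1}^p f(y_i+u)\Big)^2 .\]
This is the key positivity identity, and it uses that $p$ is even only insofar as $(f\circ f)^p$ expands into a product of $p$ copies. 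By Cauchy--Schwarz in $\mathbf y$ restricted to a suitable weight, we may then compare it to an integral of $\prod_i f(y_i+u)$ against a different measure.

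The target $\|f\ast f\|^p_{p(\mu_{B+t})}$ expands similarly as
\[\E_x \mu_{B+t}(x)\Big(\E_y f(y)f(x-y)\Big)^p=\E_{\mathbf y}\E_x\mu_{B+t}(x)\prod_i f(y_i)f(x-y_i).\]
The plan is to insert the smoothing $\lambda$: by Lemma \ref{regularapproximation} (with $L=2$, using that $\supp\lambda\subset B'+B''\subset B_{1/200\,\mathrm{rk}(B)}$), we have $\mu_{B+t}\le 2\mu_{B_{1+\rho}+t}\ast\lambda\ast\lambda$ pointwise. This is where the factor $\frac12$ comes from, and we replace $B$ by the slightly larger $B_{1+\rho}$, which is harmless because the inequality there is used only as an upper bound. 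We must, however, handle the sign issue: the integrand $(f\ast f)^p$ is nonnegative since $p$ is even, so the pointwise majorization is legitimate. We thus get
\[\|f\ast f\|^p_{p(\mu_{B+t})}\le 2\,\E_w \mu_{B_{1+\rho}+t}(w)\,\E_{u,v}\lambda(u)\lambda(v)\,(f\ast f(w+u-v))^p.\]
The inner quantity, for fixed $w$, expands as $\E_{\mathbf y}\E_{u,v}\lambda(u)\lambda(v)\prod_i f(y_i)f(w+u-v-y_i)$. Substituting $y_i\mapsto y_i+u$, this becomes $\E_{\mathbf y}\big(\E_u\lambda(u)\prod_i f(y_i+u)\big)\big(\E_v\lambda(v)\prod_i f(w-v-y_i)\big)$ (using that $\lambda$ is symmetric). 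By Cauchy--Schwarz in $\mathbf y$, this is at most the geometric mean of $\E_{\mathbf y}(\E_u\lambda(u)\prod f(y_i+u))^2=\|f\circ f\|^p_{p(\nu)}$ and the analogous expression for $g(y)=f(w-y)$, which, after translating $\mathbf y$, also equals $\|f\circ f\|^p_{p(\nu)}$. Averaging over $w$ gives the first inequality.

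The second inequality follows identically, with $f(x-y)$ replaced by $f(x+y)$. The second factor then becomes $\prod_i f(w+v+y_i)$, which is again a translate of the first square. The main obstacle is arranging the change of variables so that both Cauchy--Schwarz factors are genuinely the same expression, independent of $w$ and $t$. Because translation invariance of $\E_{\mathbf y}$ absorbs $w$, this works provided the symmetry of $\lambda$ is used carefully. We also need to check that the support condition in Lemma \ref{regularapproximation} is met by $B',B''\subset B_{1/400\,\mathrm{rk}(B)}$, which it is.
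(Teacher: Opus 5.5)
Your argument is correct, but after the shared majorization step it follows a different route from the paper's. The majorization is the same in substance. The paper applies Lemma \ref{regularapproximation} with $L=4$, viewing $\nu$ as a fourfold convolution of measures on $B_{1/400d}$. You apply it with $L=2$ to $\lambda\ast\lambda$, with $\supp\lambda\subset B_{1/200d}$. Both yield $\mu_{B+t}\le 2\mu_{B_{1+1/100d}+t}\ast\nu$.

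From there the paper works on the Fourier side. It fixes a translate $y$ and writes $\E_x(f\ast f)^p(x)\nu(x-y)=\sum_\gamma\hat\nu(\gamma)\gamma(-y)(\hat f^2\ast\cdots\ast\hat f^2)(\gamma)$. It then uses only $\hat\nu\ge 0$ and the triangle inequality $|\hat f^2\ast\cdots\ast\hat f^2|\le|\hat f|^2\ast\cdots\ast|\hat f|^2$, whose right side is the Fourier transform of $(f\circ f)^p$.

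You stay in physical space. You use the explicit factorization $\nu=\lambda\circ\lambda$ to write $\|f\circ f\|_{p(\nu)}^p=\E_{\mathbf y}\bigl(\E_u\lambda(u)\prod_i f(y_i+u)\bigr)^2$, then bound each translate by Cauchy--Schwarz in $\mathbf y$. This is the physical-space dual of the same positivity, since $\hat\nu=|\hat\lambda|^2$.

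The trade-off is this. The paper's version applies verbatim to any $\nu$ with nonnegative Fourier transform. Yours needs $\nu$ to be a self-correlation of a nonnegative $\lambda$, which holds here, but it avoids Fourier analysis and makes the positivity visible as a sum of squares. Both bounds hold uniformly in the translate, so the paper's ``choose $y$ by averaging'' and your average over $w$ play the same role.

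Two small corrections. For the second inequality, the variables separate after substituting $y_i\mapsto y_i+v$, or after first replacing $(u,v)$ by $(-u,-v)$ using the symmetry of $\lambda$ and then substituting $y_i\mapsto y_i+u$. The literal substitution $y_i\mapsto y_i+u$ produces $f(w+2u-v+y_i)$, which does not separate. Also, evenness of $p$ is what gives $|f\circ f|^p=(f\circ f)^p$ and $(f\ast f)^p\ge 0$. It is not needed for the product expansion, which holds for every positive integer $p$.
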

\begin{proof} We will prove the first inequality; the proof of the second is identical. Let $\nu = \mu_{B'}\circ\mu_{B'}\ast\mu_{B''}\circ\mu_{B''}$, and note that $\hat{\nu}\geq 0$. We apply Lemma \ref{regularapproximation} with $L=4$ to obtain 
\[\mu_B\leq 2\mu_{B_{1+1/100d}}\ast\nu.\]
Translating by $t$, we have 
\[\mu_{B+t}\leq 2\mu_{B_{1+1/100d}+t}\ast\nu.\]
We then write
\begin{align*}
    \|f\ast f \|_{p(\mu_{B+t})}^p &= \E_{x\in G}(f\ast f)^p(x)\mu_{B+t}(x) \\ 
    &\leq 2\E_{x\in G}(f\ast f)^p(x)(\mu_{B_{1+1/100d}+t}\ast \nu(x))\\
    &=2\E_{y\in B_{1+1/100d}+t}\E_{x\in G}(f\ast f)^p(x)\nu(x-y).
\end{align*}
By averaging, there is a $y\in G$ such that 
\[\frac{1}{2}\|f\ast f \|_{p(\mu_{B+t})}^p\leq \E_{x\in G}(f\ast f)^p(x)\nu(x-y).\]
Taking the Fourier transform of the right-hand side, we have 
\begin{align*}
    \E_{x\in G}(f\ast f)^p(x)\nu(x-y) &= \sum_{\gamma\in\hat{G}} \hat{\nu}(\gamma)\gamma(-y)(\hat{f}^2\underbrace{\ast\dots\ast}_{p\text{ times}}\hat{f}^2)(\gamma) \\
    &\leq \sum_{\gamma\in\hat{G}} \hat{\nu}(\gamma)(|\hat{f}|^2\underbrace{\ast\dots\ast}_{p\text{ times}}|\hat{f}|^2)(\gamma) \\
    &= \|f\circ f\|_{p(\nu)}^p. \qedhere
\end{align*}
\end{proof}

    The following lemma is standard, although it is traditionally proven for $\delta \leq \alpha^{O(1)}$, which is insufficient for our purposes. In \cite[Lemma 12]{S}, it is proved for significantly larger $\delta$, which helps us save in the radius during iteration.
    
    \begin{lemma}\label{narrowdensity} Let $B$ be a regular Bohr set of rank $d$ and suppose $A\subset B$ has relative density $\alpha$. Let $\epsilon>0$. Let $\delta>0$ be so that $|B|\geq (1-\epsilon)|B_{1+\delta}|$. Let $B',B''\subset B_\delta$. Then, either:
    \begin{enumerate}
        \item There is an $x$ such that $\mu_A\ast\mu_{B'}(x)\geq (1-4\epsilon)\mu(B)^{-1}$ and $\mu_A\ast\mu_{B''}(x)\geq (1-4\epsilon)\mu(B)^{-1}$, or
        \item $\|\mu_A\ast\mu_{B'}\|_\infty\geq (1+2\epsilon)\mu(B)^{-1}$ or $\|\mu_A\ast\mu_{B''}\|_\infty\geq (1+2\epsilon)\mu(B)^{-1}$. 
    \end{enumerate}

    \end{lemma}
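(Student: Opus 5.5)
Assume case (2) fails, so that $\|\mu_A\ast\mu_{B'}\|_\infty<(1+2\epsilon)\mu(B)^{-1}$ and $\|\mu_A\ast\mu_{B''}\|_\infty<(1+2\epsilon)\mu(B)^{-1}$. The goal is to produce a common point $x$ satisfying the two lower bounds in case (1). The idea is an averaging argument against the slightly enlarged Bohr set $B_{1+\delta}$. The key observation is that the functions $\mu_A\ast\mu_{B'}$ and $\mu_A\ast\mu_{B''}$ are both supported in $B+B_\delta\subset B_{1+\delta}$, by the triangle inequality for Bohr sets. Each has total mass $1$, in the sense that $\E_x\mu_A\ast\mu_{B'}(x)=1$.

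First I compute the averages over $B_{1+\delta}$. We have
\[\E_{x\in B_{1+\delta}}\mu_A\ast\mu_{B'}(x)=\frac{|G|}{|B_{1+\delta}|}\cdot\E_{x\in G}\mu_A\ast\mu_{B'}(x)=\mu(B_{1+\delta})^{-1}\geq(1-\epsilon)\mu(B)^{-1}.\]
The last inequality uses the hypothesis $|B|\geq(1-\epsilon)|B_{1+\delta}|$. The same bound holds for $B''$.

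Next I use the upper bound to control where the functions are small. Let $g=\mu_A\ast\mu_{B'}$ and $M=(1+2\epsilon)\mu(B)^{-1}$, so that $0\le g\le M$. Let $E'=\{x\in B_{1+\delta}:g(x)<(1-4\epsilon)\mu(B)^{-1}\}$, and let $\theta'$ be its relative density in $B_{1+\delta}$. Then the average of $g$ over $B_{1+\delta}$ is at most
\[\theta'(1-4\epsilon)\mu(B)^{-1}+(1-\theta')(1+2\epsilon)\mu(B)^{-1}.\]
Comparing with the lower bound $(1-\epsilon)\mu(B)^{-1}$ gives $\theta'\cdot 6\epsilon\leq 3\epsilon$, so $\theta'\leq 1/2$. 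Defining $E''$ and $\theta''$ in the same way for $B''$ gives $\theta''\leq 1/2$.

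This yields $\theta'+\theta''\le 1$, which is not quite strict. This is the main obstacle: a union bound needs $\theta'+\theta''<1$ to guarantee a point of $B_{1+\delta}$ outside $E'\cup E''$. I would handle it in one of two ways.

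The first is to sharpen the inequalities, noting that the strict inequality $g<M$ gives strictness. Indeed, if $\theta'=1/2$ exactly, then on the complement of $E'$ we would need $g=M$, which contradicts $g<M$. Hence $\theta'<1/2$, and likewise $\theta''<1/2$. So there exists $x\in B_{1+\delta}\setminus(E'\cup E'')$, and this $x$ satisfies both bounds in case (1).

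If one prefers margin over relying on strictness, one can instead use the slack between the constants, since the same computation with threshold $1-4\epsilon$ against $1-\epsilon$ leaves room. Alternatively, one can weight by the product measure and apply Markov's inequality to $g+h$: the average of $g+h$ over $B_{1+\delta}$ is at least $2(1-\epsilon)\mu(B)^{-1}$, while each summand is at most $M$. So at some point $g+h\geq 2(1-\epsilon)\mu(B)^{-1}$, which forces each of $g$ and $h$ to be at least $2(1-\epsilon)\mu(B)^{-1}-M=(1-4\epsilon)\mu(B)^{-1}$. This last route is cleanest, and I would present it as the proof.
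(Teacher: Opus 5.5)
Your argument is correct, and the route you settle on is the right one: $g+h$ is supported on $A+B_\delta\subset B_{1+\delta}$ with $\E_x(g+h)(x)=2$, so some $x\in B_{1+\delta}$ has $g(x)+h(x)\geq 2\mu(B_{1+\delta})^{-1}\geq 2(1-\epsilon)\mu(B)^{-1}$. The strict upper bounds from the failure of case (2) then force both $g(x),h(x)>(1-4\epsilon)\mu(B)^{-1}$. This is plain pigeonhole; no product measure or Markov inequality is involved.

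One aside is wrong: the constants leave no room for the union-bound route, since there one gets exactly $\theta'\leq 3\epsilon/6\epsilon=1/2$, so strictness really is needed. The paper gives no proof of its own and only cites \cite[Lemma 12]{S}. Your averaging argument is the standard proof of this kind of statement, and it uses nothing about $B$ beyond $B+B_\delta\subset B_{1+\delta}$ and the hypothesis $|B|\geq(1-\epsilon)|B_{1+\delta}|$ (regularity and rank play no role).
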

    Note that by the definition of a regular Bohr set, the above lemma is effective for $\delta  \approx c/d$, instead of just $\delta \approx c\alpha/d$.  
\end{appendices}

\printbibliography

\end{document}